\documentclass[10pt,reqno]{amsproc}
\linespread{1}
\usepackage{MnSymbol}
\usepackage{stmaryrd}
\usepackage{stmaryrd}
\usepackage{fullpage}
\usepackage{graphicx}
\usepackage{subfigure}
\usepackage{psfrag}
\usepackage[usenames, dvips]{color}
\usepackage{algorithm}
\usepackage{algorithmic}
\usepackage{float}
\usepackage{enumerate}
\usepackage{cite}
\usepackage{rotating}
\usepackage[small]{caption}
\usepackage{morefloats}
\newtheorem{theorem}{Theorem}

\newtheorem{remark}{Remark}

\title[A multi-time-step coupling method]{On multi-time-step monolithic coupling algorithms 
for elastodynamics}

\author{S.~Karimi} \author{K.~B.~Nakshatrala \\
{\small Department of Civil and Environmental Engineering, 
University of Houston, Houston, Texas 77204--4003.}\\
{\small Correspondence to: \textbf{\emph{e-mail:}} 
knakshatrala@uh.edu, \textbf{\emph{Phone:}}~+1-713-743-4418}}


%

\begin{document}
\begin{abstract}
  We present a way of constructing multi-time-step 
  monolithic coupling methods for elastodynamics. 
  The governing equations for constrained multiple 
  subdomains are written in dual Schur form and 
  enforce the continuity of velocities at system 
  time levels. The resulting equations will be in 
  the form of differential-algebraic equations. 
  To crystallize the ideas we shall employ Newmark 
  family of time-stepping schemes. The proposed 
  method can handle multiple subdomains, and 
  allows different time-steps as well as different 
  time stepping schemes from the Newmark family in 
  different subdomains.
  We shall use the energy method to assess the numerical 
  stability, and quantify the influence of perturbations 
  under the proposed coupling method. Two different notions
  of energy preservation are introduced and employed to
  assess the performance of the proposed method.
  Several numerical examples are presented to illustrate 
  the accuracy and stability properties of the proposed 
  method. We shall also compare the proposed multi-time-step 
  coupling method with some other methods available 
  in the literature.
\end{abstract}
\keywords{monolithic coupling algorithms; multi-time-stepping  
schemes; subcycling; partitioned schemes; differential-algebraic 
equations; elastodynamics; Newmark schemes}

\maketitle


\section{INTRODUCTION AND MOTIVATION}
\label{Sec:Monolithic_Introduction}

Coupled problems (such as fluid-structure interaction, 
structure-structure interaction and thermal-structure 
interaction) have been the subject of intense research 
in recent years in both computational mechanics and 
applied mathematics. The report compiled by the Blue 
Ribbon Panel on Simulation-Based Engineering Science 
emphasizes that the ability to solve coupled problems 
will be vital to accelerate the advances in engineering 
and science through simulation 
\cite{NSF_Blue_Ribbon_Panel_2006}.
Developing stable and accurate numerical strategies 
for coupled problems can be challenging due to several 
reasons. These problems may involve multiple temporal 
scales and different spatial scales. One may have to 
deal with different types of equations for different 
aspects of physics, which could be coupled nonlinear 
equations. It is noteworthy that there exists neither 
a complete mathematical theory (for existence, uniqueness, 
and sharp estimates) nor a comprehensive computational 
framework to solve any given coupled problem. Some of 
the current research efforts are targeted towards 
resolving the aforementioned issues. Other research 
efforts are towards developing linear and nonlinear 
solvers, parallel frameworks, and tools for heterogeneous 
computing environments (including GPU-based computing) 
for coupled problems.

Herein, we shall present a numerical approach that can 
handle moderate disparity in temporal scales. We shall 
take elastodynamics as the benchmark problem, as it 
serves two purposes. This problem is important in its 
own right. In addition, the problem serves as a model 
problem to develop numerical algorithms for fluid-structure 
interaction problems, which can be much more involved than 
a problem typically encountered in elastodynamics. In a 
fluid-structure interaction simulation, in addition to a 
coupling algorithm, robust mesh motion algorithms, data 
transfer algorithms to interpolate data across mismatching 
meshes, and stable solvers for fluids and solids are needed. 

It is now well-recognized that neither implicit 
nor explicit time-stepping schemes will be totally 
advantageous to meet all the desired features in a 
numerical simulation (e.g., see the discussion in 
references \cite{Gravouil_Combescure_IJNME_2001_v50_p199,
Nakshatrala_Hjelmstad_Tortorelli_IJNME_2008_v75_p1385}). 
Many factors (which include mesh, physical properties of 
the subdomain, accuracy, stability, total time of interest) 
affect the choice of the time-stepping scheme(s) 
\cite{Hairer_Norsett_Wanner_vol1}. It is sometimes 
much more economical to adopt different time-steps 
and/or time-stepping schemes in different subdomains.
To this end mixed methods and multi-time-step methods 
have been developed. 

\subsection{Multi-time-step and mixed methods} 
Mixed methods refer to a class of algorithms 
that employ different time-stepping schemes 
in different subdomains. Some early efforts 
on mixed methods are 
\cite{1979_Belytschko_Yen_Mullen_CMAME_v17_p259,
Belytschko_Liu_ComputStruc_1982_v12_p445 
,Belytschko_Mullen_FEMNM_1977_v2_p697 
,Belytschko_Mullen_IJNME_1978_12_p1575 
,Hughes_IJNME_1980_v15_p1413
,Hughes_Liu_JAM_1978_v45_p371
,Hughes_Pister_Taylor_CMAME_1979_v17_p159}. 
The use of different time-steps in different 
subdomains is referred to as multi-time-stepping 
or subcycling. Some representative works in this 
direction are 
\cite{Belytschko_Smolinski_Neal_IJNME_1988_v26_p349,
Daniel_CM_1997_v20_p272,
Nakshatrala_Nakshatrala_Tortorelli_IJNME_2009_v78_p1387}.
But many of the prior efforts on mixed methods 
and multi-time-stepping suffer from one or more 
of the following deficiencies: 
(i) The method cannot handle multiple subdomains. 
(ii) The method may not be accurate for disparate 
material properties, and for highly graded meshes. 
(iii) The method may suffer from very stringent 
stability limits, which may not be practical to 
meet realistic problems.
(iv) The accuracy and stability depend on the preferential 
  treatment of certain subdomains. For example, in the 
  application of the conventional staggered coupling 
  method, one domain is made to advance before 
  another. The accuracy and stability depends on 
  the choice of the subdomain that has to advance 
  first \cite{Akkasale_MSThesis_TAMU_2011}. 

We conjecture that the main source of the 
aforementioned numerical deficiencies is 
due to the fact that the prior works tried 
to develop coupling methods for transient 
problems by extending the strategies that 
were successful in developing partitioned 
schemes for static problems. However, it 
should be emphasized that designing coupling 
algorithms or partitioned schemes for transient 
problems require special attention compared to 
static problems. 
The governing equations for both undecomposed 
and decomposed static problems are algebraic 
equations. In the case of transient problems, 
the governing equations of an undecomposed 
problem are Ordinary Differential Equations (ODEs) 
whereas the governing equations of a decomposed 
problem are Differential-Algebraic Equations 
(DAEs).

Many of the prior works just 
employed the time-stepping schemes that are 
primarily developed for ODEs to construct 
partitioned schemes. However, it is well-known 
in the numerical analysis literature that care 
should be taken in applying popular time 
integrating schemes developed for ODEs to 
solve DAEs. The title of Petzold's seminal work 
\cite{Petzold_SIAMJSciStatComp_1982_v3_p367} 
-- ``Differential/algebraic equations are not 
ODEs'' -- succinctly summarizes this fact. This 
viewpoint was also taken in references 
\cite{Nakshatrala_Hjelmstad_Tortorelli_IJNME_2008_v75_p1385,
Nakshatrala_Prakash_Hjelmstad_JCP_2009_v228_p7957} 
to develop coupling methods for first-order transient 
systems.

\emph{This paper aims to develop a coupling method that 
allows different time-steps and different time 
integrators in different parts of the computational 
domain, which will be achieved using the results 
from the theory of differential-algebraic equations 
(e.g., Ascher and Petzold \cite{Ascher_Petzold}).} 
In recent years, the trend is to use dual Schur 
approach to develop multi-time-step coupling algorithms 
for second-order transient systems. A notable work in 
this direction is by Gravouil and Combescure (e.g., 
\cite{Gravouil_Combescure_IJNME_2001_v50_p199}, which 
we shall refer to as the GC method. 
Based on the GC method, Pegon and Magonette
developed a parallel inter-field method (the PM method), 
reference \cite{Bursi_Pegon_IJNME_2008_v75} is devoted to 
analysis of this method. 
Bursi et al. extended the PM method by employing
the generalized $\alpha$-method in 
\cite{Bursi_Pegon_JCAM_2010_v234}. Real time 
partitioned time-integration using the LSRT methods
has been of interest recently in
\cite{Jia_Bursi_IJNME_v87_2011}. Mahjoubi and Krenk 
proposed a multi-time-step
coupling method using state-pace time integration in
\cite{2010_Mahjoubi_Krenk_IJNME_v83_p1700}, a more
general presentation of which appears in 
\cite{2011_Mahjoubi_Gravouil_Combescure_Greffet_CMAME_v200_p1069}.
Another work that is relevant to the current paper 
is by Prakash and Hjelmstad 
\cite{Prakash_Hjelmstad_IJNME_2004_v61_p2183}, 
which we shall refer to as the PH method. It 
is worth to critically review the GC and PH 
methods. 

\subsubsection{A critical analysis of the GC and PH methods}
The GC method is a multi-time-step coupling method 
for structural problems based on Newmark family of 
time integrators. The GC coupling method is built 
based on the following assumptions:
\begin{enumerate}[(GC1)]
\item Enforcing the continuity of velocity on the interface
at the fine time-steps.
\item Linear interpolation of interface velocities.
\item Linear interpolation of Lagrange multiplier 
within the coarsest time-step. 
\end{enumerate} 
The GC method is shown to exhibit excessive 
numerical damping (for example, see reference 
\cite{Prakash_Hjelmstad_IJNME_2004_v61_p2183} 
and the numerical results presented in Section 
\ref{Sec:S6_Monolithic_SDOF} of this paper). 
The PH method is based on a modification to 
the GC method, and is constructed based on 
the following assumptions: 
\begin{enumerate}[(PH1)]
\item Employed continuity of velocities along the 
  subdomain interface at coarse time-steps.
\item Linear interpolation of \emph{all} kinematic 
  variables (displacements, velocities, accelerations 
  of the nodes on the subdomain interface and in the 
  interior of the subdomains) within a coarse time-step. 
\item The method as it is presented in reference 
  \cite{Prakash_Hjelmstad_IJNME_2004_v61_p2183} 
  is valid only for two subdomains. 
\item The subdomain that has the largest time-step
has a more significant role in formulating the 
algorithm.
\end{enumerate}
In Section \ref{Sec:S4_Monolithic_Proposed}, we shall 
show that Assumption (PH2) is not consistent with the 
underlying physics and need not be consistent with 
the underlying numerical time-stepping scheme. It is 
also claimed that the PH method is energy preserving 
implying that the coupling does not affect the total 
physical energy of the system. In a subsequent section, 
we shall present various notions of energy preserving 
by a coupling algorithm, and show that the PH method 
is not energy preserving (on the contrary to what 
has been claimed in Reference 
\cite{Prakash_Hjelmstad_IJNME_2004_v61_p2183}). 

\subsection{Main contributions of this paper} 
The proposed coupling method is developed by selecting 
the ideal combination from the assumptions of the 
GC and PH methods, and thereby eliminating all the 
deficiencies that these two methods suffer from. 
%
This paper has made several advancements in 
multi-time-step coupling of second-order 
transient systems, and some of the main 
ones are as follows:
\begin{enumerate}[(i)]
\item Developing a coupling method that 
can handle multiple subdomains, allows 
different time-steps in different subdomains, 
allows different time-stepping schemes under the 
Newmark family in different subdomains, and is 
stable and accurate.
\item A stability proof using the energy method to obtain 
  sufficient conditions for multi-time-step coupling is 
  presented. Unlike many of the earlier works, the contribution
  of interface and subdomains is taken into account to derive
  the stability criteria. Unlike the prior works on multi-time-step 
  coupling \cite{Gravouil_Combescure_IJNME_2001_v50_p199,
    Prakash_Hjelmstad_IJNME_2004_v61_p2183}, the proof is 
  constructed by taking into account the contributions 
  from all the subdomains and the interface, which is 
  the correct form.
\item Documented the deficiencies of backward difference 
  formulae (BDF) and implicit Runge-Kutta (IRK) schemes 
  (which are popular for solving differential-algebraic 
  equations) for solving second-order transient systems 
  with invariants (e.g., conservation of energy). 
\item New notions of energy preservation are 
	introduced and conditions under which the proposed method 
	satisfies any of those notions are also derived.  
\item A systematic study (both on the theoretical and 
  numerical fronts) on the effect of subcycling and 
  system time-step on the accuracy is presented. 
  Specifically, we have shown that subcycling 
  need not always improve accuracy. A criterion is 
  devised to guide whether subcycling will improve 
  accuracy or not. An attractive feature is that 
  this criterion can be calculated on the fly 
  during a numerical simulation.
\end{enumerate}

\subsection{An outline of the paper} 
The remainder of this paper is organized as follows. 
Section \ref{Sec:S2_Monolithic_Newmark} briefly 
outlines Newmark family of time stepping schemes. 
Section \ref{Sec:S3_Monolithic_GE} presents the 
governing equations for multiple subdomains with 
a discussion on the numerical treatment of interface 
constraints. Section \ref{Sec:S4_Monolithic_Proposed} 
presents the proposed multi-time-step coupling method. 
A systematic theoretical analysis of the proposed 
coupling method (which includes stability analysis 
based on the energy method, influence of perturbations, 
bounds on interface drifts) is presented in Section 
\ref{Sec:S5_Monolithic_Stability}. In Section 
\ref{Sec:S6_Monolithic_SDOF}, some of the 
theoretical predictions are verified using 
a simple lumped parameter system. 
Section \ref{Sec:S7_Monolithic_Energy} discusses the 
conditions under which the multi-time-step coupling 
algorithm is energy conserving and the conditions 
under which it is energy preserving. 
Some deficiencies of employing backward difference 
formulae and implicit Runge-Kutta schemes for developing 
coupling algorithms for elastodynamics are discussed in 
Section \ref{Sec:S8_Monolithic_BDF_IRK}. Several 
representative numerical examples are presented in 
Section \ref{Sec:S9_Monolithic_NR} to illustrate the 
performance of the proposed coupling method. Conclusions 
are drawn in Section \ref{Sec:S10_Monolithic_Conclusions}.

%
\section{NEWMARK FAMILY OF TIME-STEPPING SCHEMES}
\label{Sec:S2_Monolithic_Newmark}
Consider a system of second-order ordinary differential 
equations of the following form: 
\begin{align}
  \label{Eqn:Monolithic_system_ODEs}
  \boldsymbol{M} \ddot{\boldsymbol{u}}(t) + \boldsymbol{K} 
  \boldsymbol{u}(t) = \boldsymbol{f}(t) \quad t \in (0, T]
\end{align}
where $t$ denotes time, $T$ denotes the time interval of 
interest, $\boldsymbol{M}$ is a symmetric positive definite 
matrix, $\boldsymbol{K}$ is a symmetric positive semidefinite 
matrix, and a superposed dot denotes derivative with respect 
to the time. The above system of equations can arise from a 
semi-discrete finite element discretization of the governing 
equations in linear elastodynamics \cite{Hughes}. In this case, 
$\boldsymbol{M}$ is referred to as the mass matrix, $\boldsymbol{K}$ 
is the stiffness matrix, and $\boldsymbol{u}(t)$ is the nodal 
displacement vector. Of course, one has to augment the above 
equation with initial conditions, which, in the context of 
elastodynamics, will be the prescription of the initial 
displacement vector and the initial velocity vector.
One popular approach for solving equation 
\eqref{Eqn:Monolithic_system_ODEs} numerically
 is to employ a time-stepping scheme from the 
 Newmark family \cite{Newmark_JEMD_1959_v85_p67}. 
 We now present the Newmark time-stepping schemes 
 in the context of undecomposed problem (i.e., 
 the computational domain is not decomposed into 
 subdomains). In the subsequent sections, we shall 
 extend the presentation to multiple subdomains 
 with the possibility of using different time-steps
 and/or different time integrators under Newmark
 family in different subdomains. 

Let the time interval of interest $T$ be 
divided into $N$ sub-intervals such that
\begin{align}
  [0,T] = \bigcup_{n = 1}^{N} [t_{n-1},t_{n}]
\end{align}  
where $0 = t_0 < t_1 < \cdots < t_N = T$ are referred to 
as time levels. To make the presentation simple, we shall 
assume that the sub-intervals are uniform. That is, 
\begin{align}
  \Delta t = t_{n} - t_{n-1} \quad \forall n = 1,\cdots, N
\end{align}
where $\Delta t$ is commonly referred to as the time-step. 
It should be, however, noted that the presentation can be 
easily extended to incorporate variable time-steps. 
\begin{remark}
  In our development of the proposed multi-time-step coupling 
  method, we shall use different kinds of time-steps (e.g., 
  subdomain time-step, system time-step). These time-steps 
  will be introduced in a subsequent section. For the present 
  discussion, such a distinction is not required, as for single 
  subdomain there is only one time-step. 
\end{remark}
We shall employ the following notation to denote 
displacement, velocity and acceleration nodal 
vectors at discrete time levels: 
\begin{align}
\boldsymbol{d}^{(n)} = \boldsymbol{u}(t = t_n), \quad 
\boldsymbol{v}^{(n)} = \left. \frac{d \boldsymbol{u}}
{d t}\right\vert_{t = t_n}, \quad 
\boldsymbol{a}^{(n)} = \left. \frac{d^2 \boldsymbol{u}}
{d t^2}\right\vert_{t = t_n} 
\end{align}
Newmark family of time stepping schemes, which is 
a two-parameter family of time integrators, can be 
written as follows:
\begin{subequations}
  \begin{align}
    \label{Eqn:Monolithic_Newmark_d}
    \boldsymbol{d}^{(n+1)} &= \boldsymbol{d}^{(n)} + 
    \Delta t \; \boldsymbol{v}^{(n)} + \frac{\Delta t^2}{2} 
    \left((1 - 2 \beta) \boldsymbol{a}^{(n)} + 2 \beta 
    \boldsymbol{a}^{(n+1)} \right) \\
    \label{Eqn:Monolithic_Newmark_v}
    \boldsymbol{v}^{(n+1)} &= \boldsymbol{v}^{(n)} + 
    \Delta t  \left((1 - \gamma) \boldsymbol{a}^{(n)} 
    + \gamma \boldsymbol{a}^{(n+1)} \right)
  \end{align}
\end{subequations}
where $\beta$ and $\gamma$ are user-specified parameters. 
A numerical solution at $(n+1)$-th time level can be obtained 
by simultaneously solving equations 
\eqref{Eqn:Monolithic_Newmark_d}--\eqref{Eqn:Monolithic_Newmark_v} 
with the following equation:
\begin{align}
  \boldsymbol{M} \boldsymbol{a}^{(n+1)} + \boldsymbol{K} 
  \boldsymbol{d}^{(n+1)} = \boldsymbol{f}^{(n+1)}
\end{align}
where 
\begin{align}
  \boldsymbol{f}^{(n+1)} := \boldsymbol{f}(t = t_{n+1})
\end{align}

It is well-known that one needs to choose $\gamma \geq 1/2$ 
for numerical stability \cite{Wood}. The time-stepping scheme will be 
unconditionally stable if $2 \beta \geq \gamma$, and will 
be conditionally stable if $2 \beta < \gamma$. 
Some popular time-stepping schemes under the Newmark 
family are the central difference scheme $(\gamma = 1/2, 
\beta = 0)$, the average acceleration scheme $(\gamma = 1/2, 
\beta = 1/4)$, and the linear acceleration scheme 
$(\gamma = 1/2, \beta = 1/6)$. The central difference scheme 
is also referred to as the velocity 
Verlet scheme, which is the case in the molecular dynamics 
literature (e.g., see reference \cite{Leimkuhler_Reich}).
The central difference scheme is explicit, second-order accurate, 
and conditionally stable. The average acceleration scheme is 
implicit, second-order accurate, and unconditionally stable. 
The linear acceleration scheme is implicit, second-order 
accurate, and conditionally stable. For further details 
on Newmark family of time-stepping schemes in the context 
of undecomposed problem, see references \cite{Hughes,Wood,Geradin_Rixen}.

%
\section{GOVERNING EQUATIONS FOR MULTIPLE SUBDOMAINS}
\label{Sec:S3_Monolithic_GE}
We now write governing equations for multiple subdomains. We 
will also outline various ways to write subdomain interface 
conditions, and discuss their pros and cons. To this end, let 
us divide the domain $\Omega$ into $S$ non-overlapping subdomains, 
which will be denoted by $\Omega_1, \cdots, \Omega_S$. That is, 
\begin{align}
  \Omega = \bigcup_{i=1}^{S} \Omega_i \quad \mbox{and} 
  \quad \Omega_i \cap \Omega_j = \emptyset \quad \mbox{for} 
  \; i \neq j 
\end{align}
We shall assume that the meshes in the subdomains are 
conforming along the subdomain interface, as shown in 
Figure \ref{Fig:Monolithic_subdomains}. There are several 
ways to enforce the continuity along the interface, and 
hence, several ways to write the governing equations for 
multiple subdomains. Herein, we shall employ the dual Schur 
approach \cite{Toselli_DD}, which is also employed in the 
references that are relevant to this paper (i.e., references 
\cite{Gravouil_Combescure_IJNME_2001_v50_p199,
Prakash_Hjelmstad_IJNME_2004_v61_p2183}). 

We shall denote the number of displacement degrees-of-freedom 
in the $i$-th subdomain by $N_i$. The size of the velocity 
and acceleration nodal vectors in the $i$-th subdomain will 
also be $N_i \times 1$. The interface continuity conditions 
can be compactly written using signed Boolean matrices. A 
signed Boolean matrix is a matrix with entries either $0$, 
$-1$, or $+1$ such that each row has \emph{at most} one 
non-zero entry. Let us denote the total number of interface 
constraints by $N_C$. The size of the matrix $\boldsymbol{C}_i$ 
will be $N_C \times N_i$.

The governing equations for constrained multiple subdomains 
in a (time) continuous setting can be written as follows:
\begin{subequations}
\label{Eqn:Monolithic_GE_time_continuous}
  \begin{align}
  \label{Eqn:Monolithic_GE}
    &\boldsymbol{M}_i \ddot{\boldsymbol{u}}_i(t) +
    \boldsymbol{K}_i \boldsymbol{u}_i(t) = 
    \boldsymbol{f}_i(t) + \boldsymbol{C}_i^{\mathrm{T}} 
    \boldsymbol{\lambda}(t) \quad \forall i = 1, \cdots, S \\
    \label{Eqn:Monolithic_constraint_original}
    &\sum_{i = 1}^{S} \boldsymbol{C}_{i} \boldsymbol{u}_i 
    \left(t \right) = \boldsymbol{0}
  \end{align}
\end{subequations}
where the displacement vector of the $i$-th subdomain 
is denoted by $\boldsymbol{u}_i \left( t \right)$, and the 
external force applied to the $i$-th subdomain is denoted by 
$\boldsymbol{f}_i(t)$. The mass and stiffness matrices of the 
$i$-th subdomain are denoted by $\boldsymbol{M}_i$ 
and $\boldsymbol{K}_i$ respectively. In this paper, we shall assume that 
the matrices $\boldsymbol{M}_i$ are symmetric 
and positive definite, and the matrices $\boldsymbol{K}_i$ 
to be symmetric and positive semi-definite.
Equation \eqref{Eqn:Monolithic_constraint_original} is 
an algebraic constraint enforcing kinematic continuity 
of displacements along the subdomain interface. The 
vector $\boldsymbol{\lambda}$ is the vector of Lagrange 
multipliers arising due to the enforcement of constraints. 
The above equations should be augmented with appropriate 
initial conditions. A brief discussion on the derivation 
of the above equations can be found in Appendix. 
Equation \eqref{Eqn:Monolithic_GE_time_continuous} 
form a system of differential-algebraic equations. For 
the benefit of broader audience, we now briefly discuss 
differential-algebraic equations. 

\begin{remark}
  If one wants to including physical damping, 
  equation \eqref{Eqn:Monolithic_GE} should 
  be replaced with the following equation:
  \begin{align}
    &\boldsymbol{M}_i \ddot{\boldsymbol{u}}_i(t) +
    \boldsymbol{D}_i \dot{\boldsymbol{u}}_i + 
    \boldsymbol{K}_i \boldsymbol{u}_i(t) = 
    \boldsymbol{f}_i(t) + \boldsymbol{C}_i^{\mathrm{T}} 
    \boldsymbol{\lambda}(t) \quad \forall i = 1, \cdots, S 
  \end{align}
  where $\boldsymbol{D}_i$ is the damping matrix 
  for the $i$-th subdomain. One can then easily 
  extend the proposed multi-time-step coupling 
  method to include contribution from physical 
  damping. However, a more challenging task is 
  to characterize the performance of the coupling 
  method due to damping. 
  This will depend on several issues like: whether 
  the damping is due to viscoelasticity, plasticity, 
  viscoplasticity or frictional contact? Whether the 
  damping matrix be modeled as Rayleigh damping (which 
  basically assumes that the damping matrix is a linear 
  combination of the mass matrix and the stiffness 
  matrix)? A systematic treatment of these issues 
  are beyond the scope of this paper, and will be 
  addressed in our future works. 
\end{remark}

\subsection{Differential-algebraic equations}
A Differential-Algebraic Equation (DAE) is defined as an 
equation involving unknown functions and their derivatives. 
A DAE, in its most general form, can be written as follows: 
\begin{align}
  \label{Eqn:DAE_implicit}
  &\boldsymbol{g} \left( \dot{\boldsymbol{x}}(t),
  \boldsymbol{x}(t), t \right)= \boldsymbol{0} 
  \quad t \in (0,T] 
\end{align}
where the unknown function is denoted by $\boldsymbol{x}(t)$. 
A DAE of the form given by equation \eqref{Eqn:DAE_implicit} 
is commonly referred to as an implicit DAE. A quantity that 
is useful in the study of (smooth) differential-algebraic 
equations is the so-called differential index, which was first 
introduced by Gear \cite{Gear_SIAM_JNA_1990_v27_p1527} and 
further popularized by Petzold and Campbell \cite{Ascher_Petzold,
Brenan_Campbell_Petzold_DAE}. 
For a DAE of the form given by equation \eqref{Eqn:DAE_implicit}, 
differential index is the minimum number of times one has to 
differentiate with respect to the independent variable $t$ to 
be able to \emph{rewrite} equation \eqref{Eqn:DAE_implicit} in 
the following form:
\begin{align}
  \dot{\boldsymbol{x}}(t) = \boldsymbol{h}(\boldsymbol{x}(t),t)
\end{align} 
using only algebraic manipulations. It is commonly believed 
that the higher the differential index the greater will be 
the difficulty in obtaining stable numerical solutions. 
An important subclass of DAEs is titled as semi-explicit, 
which can be written as follows:
\begin{subequations}
  \begin{align}
    \label{Eqn:DAE_semiexplicit}
    \dot{\boldsymbol{x}}(t) = \boldsymbol{p}(\boldsymbol{x}(t),t) \\
    \boldsymbol{0} = \boldsymbol{q}(\boldsymbol{x}(t),t)
  \end{align}
\end{subequations}

From the above discussion, it is evident that the DAE given by equations 
\eqref{Eqn:Monolithic_GE_time_continuous}
is a semi-explicit DAE with differential index 3. One way of solving 
a higher index DAE is to employ the standard index reduction technique 
to obtain a \emph{mathematically} equivalent DAE with lower differential 
index. It is noteworthy that index reduction can have deleterious effect 
on the stability and accuracy of numerical solutions (e.g., drift in the 
constraint). We now explore several mathematically equivalent 
forms of governing equations, which will have differential index ranging 
from 0 to 3.  

\subsection{Subdomain interface constraints}
As stated earlier, dual Schur techniques for domain 
decomposition are of interest throughout this paper. 
One may write several types of continuity constraints 
resulting in semi-explicit DAEs of different differential 
indices. Note that in a continuous setting all these 
versions are mathematically equivalent. However, from 
a numerical point of view, their performance can be 
dramatically different. In fact, some may even exhibit 
instabilities. Some ways of constructing dual Schur 
methods are discussed below, which guide future 
research on constructing new multi-time-step 
coupling methods. 

\textsf{$\boldsymbol{d}$-continuity method:} 
This method considers the original set of equations given 
by equations \eqref{Eqn:Monolithic_GE_time_continuous}. 
The method obtains $\left(\boldsymbol{u}_1 \left( t \right),\cdots,
\boldsymbol{u}_S\left( t \right), \boldsymbol{\lambda}(t) \right) $ for 
$t \in (0,T]$ by solving the following equations: 
\begin{subequations}
  \begin{align}
   \label{Eqn:Monolithic_dmethod_GE}
    &\boldsymbol{M}_{i} \ddot{\boldsymbol{u}}_{i}(t) 
    + \boldsymbol{K}_{i} \boldsymbol{u}_{i}(t) = 
    \boldsymbol{f}_{i}(t) + \boldsymbol{C}_{i}^{\mathrm{T}} 
    \boldsymbol{\lambda}\left( t \right) \quad \forall 
    i = 1,\cdots, S \\
    \label{Eqn:Monolithic_dmethod_constraint}
    &\sum_{i = 1}^{S} \boldsymbol{C}_{i} \boldsymbol{u}_{i}
    (t) = \boldsymbol{0} 
  \end{align}
\end{subequations}
The above equations 
\eqref{Eqn:Monolithic_dmethod_GE}--\eqref{Eqn:Monolithic_dmethod_constraint} 
form a system of DAEs of differential index \emph{three}. 
It has been discussed in the literature that the numerical 
solutions based on this method are prone to instabilities 
\cite{Geradin_Rixen,Geradin_Cardona}.

\textsf{$\boldsymbol{v}$-continuity method:} This method 
obtains $\left( \boldsymbol{u}_1\left( t \right),\cdots,
\boldsymbol{u}_S\left( t \right),\boldsymbol{\lambda}(t) 
\right)$ for $ t\in (0,T]$ by solving the following 
equations: 
\begin{subequations}
\label{Eqn:vContinuity_GE}
  \begin{align}
  \label{Eqn:vContinuity_GE_subdomain}
    &\boldsymbol{M}_{i} \ddot{\boldsymbol{u}}_{i}(t) 
    + \boldsymbol{K}_{i} \boldsymbol{u}_{i}(t) = 
    \boldsymbol{f}_{i}(t) + \boldsymbol{C}_{i}^{\mathrm{T}}
    \boldsymbol{\lambda}\left( t \right) \quad \forall i = 1, \cdots, S\\
   \label{Eqn:vContinuity_GE_constraint}
    &\sum_{i = 1}^{S} \boldsymbol{C}_{i} \dot{\boldsymbol{u}}_{i}(t)
     = \boldsymbol{0} 
  \end{align}
\end{subequations}
The above equations form a system of DAEs of differential index 
\emph{two}. The $\boldsymbol{v}$-continuity method is of interest 
in this paper and in the previous works by Gravouil and Combescure 
\cite{Gravouil_Combescure_IJNME_2001_v50_p199}, and Prakash and 
Hjelmstad \cite{Prakash_Hjelmstad_IJNME_2004_v61_p2183}. This 
form of equations provides a simple but stable framework for 
seeking numerical solutions, and will form the basis for the 
proposed multi-time-step coupling method.

\textsf{$\boldsymbol{a}$-continuity method:} This method 
obtains $\left(\boldsymbol{u}_1\left( t \right), \cdots, 
\boldsymbol{u}_S\left( t \right), \boldsymbol{\lambda}
\left( t \right)\right)$ for $t \in (0,T]$ by solving
 the following equations: 
\begin{subequations}
  \begin{align}
    &\boldsymbol{M}_{i} \ddot{\boldsymbol{u}}_{i}(t) 
    + \boldsymbol{K}_{i} \boldsymbol{u}_{i}(t) = 
    \boldsymbol{f}_{i}(t) + \boldsymbol{C}_{i}^{\mathrm{T}}
    \boldsymbol{\lambda}\left( t \right) \quad \forall 
    i = 1,\cdots,S\\
    &\sum_{i = 1}^{S} \boldsymbol{C}_{i} \ddot{\boldsymbol{u}}_{i}(t)
    = \boldsymbol{0} 
  \end{align}
\end{subequations}
The differential index of the above DAE is \emph{unity}. 
A drawback of this method is that there can be significant 
irrecoverable drift in the displacements without employing 
constraint stabilization or projection methods. The drift 
can be attributed to the fact that there is no explicit 
constraint on the continuity of displacements along the 
subdomain interface. We, therefore, do not employ this 
method in this paper.

\textsf{Baumgarte stabilization method:} Under this method, 
kinematic constraint appears as a linear combination of the 
kinematic constraints under the $\boldsymbol{d}$-continuity, 
$\boldsymbol{v}$-continuity and $\boldsymbol{a}$-continuity 
methods. This method obtains $\left(\boldsymbol{u}_1(t),
\cdots,\boldsymbol{u}_S(t),\boldsymbol{\lambda}(t)\right)$ 
for $t \in (0,T]$ by solving the following equations: 
  \begin{subequations}
  \label{Eqn:DAE_Baumgarte}
    \begin{align}
      &\boldsymbol{M}_{i} \ddot{\boldsymbol{u}}_{i}(t) + \boldsymbol{K}_{i} 
      \boldsymbol{u}_{i}(t) = \boldsymbol{f}_{i}(t) + \boldsymbol{C}_{i}^{\mathrm{T}}
      \boldsymbol{\lambda}\left( t \right) \quad \forall i = 1,\cdots, S \\
      &\sum_{i=1}^{S} \boldsymbol{C}_{i} \ddot{\boldsymbol{u}}_{i}(t) 
      + \frac{\alpha}{\Delta t} \sum_{i=1}^{S} \boldsymbol{C}_{i} \dot{\boldsymbol{u}}_{i}(t) 
      + \frac{\beta}{\Delta t ^2} \sum_{i = 1}^{S} 
      \boldsymbol{C}_{i} \boldsymbol{u}_{i}(t) 
      = \boldsymbol{0} 
    \end{align}
  \end{subequations}
where $\alpha$ and $\beta$ are non-dimensional user-specified 
parameters. One can achieve damping in the drift displacements 
by choosing parameters satisfying the condition $\alpha^2 - 4 
\beta < 0$. This method was first proposed by Baumgarte in
\cite{Baumgarte_CMAME_1972_v1_p1} for constrained mechanical
systems. Note that in \cite{Baumgarte_CMAME_1972_v1_p1}, the
coefficients $\alpha$ and $\beta$ have dimensions of 
$\left[ T \right]^{-1}$ and $\left[ T \right]^{-2}$ respectively,
but in \eqref{Eqn:DAE_Baumgarte}, those coefficients are 
non-dimensionalized.
In Reference \cite{Nakshatrala_Prakash_Hjelmstad_JCP_2009_v228_p7957}, 
the Baumgarte stabilization method has been extended to first-order 
differential-algebraic equations, and the authors were able to derive 
sufficient conditions for stability using the energy method. 
To the best of the authors' knowledge deriving sufficient 
conditions for stability under the Baumgarte method for 
second-order differential-algebraic equations is still 
an open problem. Some notable efforts in this direction 
are \cite{1995_Yoon_Howe_Greenwood_ASME_JMD_v117_p446,
2008_Bauchau_JCND_v3_p1,flores2009investigation}.

\textsf{Rewriting as a system of ordinary differential equations:}
One can differentiate further, and rewrite the \textsf{a-continuity method} 
as a system of ordinary differential equations. From the definition 
of differential index, it is obvious that the differential index of 
the resulting governing equations will be \emph{zero}. The governing 
equations for this method take the following form:
\begin{subequations}
  \begin{align}
    &\dot{\boldsymbol{u}_i} = \boldsymbol{v}_i \\
    &\dot{\boldsymbol{v}}_i = \boldsymbol{M}_i^{-1} 
    \left( \boldsymbol{f}_i + \boldsymbol{C}_i^{\mathrm{T}} 
    \boldsymbol{\lambda} - \boldsymbol{K}_i \boldsymbol{u}_i \right) \\
    &\boldsymbol{\lambda} = \left( \sum_{i=1}^{S} 
    \boldsymbol{C}_i \boldsymbol{M}_i^{-1} 
    \boldsymbol{C}_i^{\mathrm{T}}\right)^{-1} 
    \left[ \sum_{i = 1}^S \boldsymbol{C}_i \boldsymbol{M}_i^{-1}
      \left( \boldsymbol{K}_i \boldsymbol{u}_i - 
      \boldsymbol{f}_i\right) \right] 
 \end{align}
\end{subequations}
The main drawback of the above method is that there will be 
significant irrecoverable drift in the continuity of subdomain 
interface displacements and velocities. As advocated by Petzold 
in her famous paper \cite{Petzold_SIAMJSciStatComp_1982_v3_p367}, 
solving DAEs is much harder than solving systems of ODEs. Many 
of the popular integrators that are used for solving ODEs are 
not stable and accurate for solving DAEs.

\textsf{Rewriting as a system of first-order differential-algebraic equations:}
Yet another approach is to rewrite the governing equations in 
first-order form, and then employ appropriate time-stepping 
schemes for solving first-order DAEs (e.g., backward difference 
formulae, implicit Runge-Kutta schemes). The first-order form 
can be achieved by introducing an auxiliary variable. The 
governing equations take the following form:
\begin{subequations}
  \begin{align}
    &\dot{\boldsymbol{u}}(t) = \boldsymbol{v}(t) \\
    &\boldsymbol{M}_i \dot{\boldsymbol{v}}_i + 
    \boldsymbol{K}_i \boldsymbol{u}_i = \boldsymbol{f}_i(t) 
    + \boldsymbol{C}_i^{\mathrm{T}} \boldsymbol{\lambda} \\
    \label{Eqn:Monolithic_1st_order_u}
    &\sum_{i=1}^{S} \boldsymbol{C}_i \boldsymbol{u}_i = \boldsymbol{0}
  \end{align}
\end{subequations}
The differential index for the above system is \emph{three}. 
If one replaces the interface constraint equation 
\eqref{Eqn:Monolithic_1st_order_u} with either of 
the following:
\begin{align}
  \sum_{i = 1}^S \boldsymbol{C}_i \dot{\boldsymbol{u}}_i 
  = \boldsymbol{0} \quad \mathrm{or} \quad 
  \sum_{i = 1}^S \boldsymbol{C}_i \boldsymbol{v}_i = 
  \boldsymbol{0}
\end{align} 
then the differential index of the resulting differential-algebraic 
equations will be \emph{two}. If the interface constraint equation 
\eqref{Eqn:Monolithic_1st_order_u} is replaced with the following: 
\begin{align}
  \sum_{i = 1}^S \boldsymbol{C}_i \dot{\boldsymbol{v}}_i 
  = \boldsymbol{0}
\end{align}
then the resulting first-order DAEs will have index \emph{one}. 

In a subsequent section we shall show that the approach 
of rewriting the governing equations as first-order DAEs 
and then employing time-stepping schemes that are typically 
used for first-order transient systems is not accurate for 
elastodynamics. Hence, we do not employ such an approach 
to develop a multi-time-step coupling method. Instead, 
we consider the governing equations in second-order form 
and modify Newmark time-stepping schemes to be able to 
obtain stable and accurate results for resulting DAEs.
In the next section, we shall extend the 
$\boldsymbol{v}$-continuity to be able to 
employ different time-steps in different 
subdomains, and to couple explicit and 
implicit time-stepping schemes.

%
\section{PROPOSED MULTI-TIME-STEP COUPLING METHOD}
\label{Sec:S4_Monolithic_Proposed}
The aim of this paper is to solve equations 
\eqref{Eqn:vContinuity_GE_subdomain}--\eqref{Eqn:vContinuity_GE_constraint} 
numerically by allowing each subdomain to have its own 
time-step and its own time integrator from the Newmark 
family of time stepping schemes. We first introduce 
notation that will help in presenting the proposed 
multi-time-step coupling method in a concise manner. 

\subsection{Notation for multi-time-step coupling}
Both the GC and PH methods are devised by introducing 
the coarsest time-step, which is the maximum of all 
the subdomain time-steps. This creates bias, at least 
in the mathematical setting, towards the subdomain 
that has the maximum time-step. 
Herein, we alleviate this drawback by introducing 
the notion of system time-step, which is greater 
than or equal to the coarsest time-step. Moreover, 
this approach allows for the possibility of all 
subdomains to subcycle, which is illustrated 
in a subsequent section. 
Figure \ref{Fig:Monolithic_multi_time_step_notation} 
gives a pictorial description of subdomain time-steps, 
system time-step, and the concept of subcycling. 
We shall define $\eta_i$ to be the ratio between 
system time-step ($\Delta t$) and the $i$-th 
subdomain time-step ($\Delta t_i$). That is,
\begin{align}
  \eta_i := \frac{\Delta t}{\Delta t_{i}}
\end{align}
For simplicity, we shall assume that $\eta_i$ is a (positive) 
integer.

We shall use the following notation to represent the 
value of a quantity of interest at subdomain time levels:
\begin{align}
  \Box_i^{\left( n + \frac{j}{\eta_i} \right)} \approx \Box_i 
  \left( t = n \Delta t + j \Delta t_i \right)
\end{align}
We shall employ the following notation to group the 
kinematic quantities:
\begin{align}
  \boldsymbol{X}_{i}^{\left( n+ \frac{j}{\eta_i} \right)} := \left[ \begin{array}{c}	
      \boldsymbol{a}_{i}^{\left( n+ \frac{j}{\eta_i} \right)} \\
      \boldsymbol{v}_{i}^{\left( n+ \frac{j}{\eta_i} \right)} \\
      \boldsymbol{d}_{i}^{\left( n+ \frac{j}{\eta_i} \right)} \end{array}\right], \quad 
  \mathbb{X}_i^{(n+1)} := \left[\begin{array}{c}
      \boldsymbol{X}_{i}^{\left(n+\frac{1}{\eta_i}\right)} \\ 
      \boldsymbol{X}_{i}^{\left(n+\frac{2}{\eta_i}\right)} \\ 
      \vdots \\
      \boldsymbol{X}_{i}^{(n + 1)} \\ 
      \end{array} \right], \quad 
  \mathbb{X}^{(n+1)} := \left[\begin{array}{c}
      \mathbb{X}_{1}^{\left(n+1\right)} \\ 
      \mathbb{X}_{2}^{\left(n+1\right)} \\ 
      \vdots \\
      \mathbb{X}_{S}^{(n + 1)} \\ 
      \end{array} \right]
\end{align}
The vector $\boldsymbol{X}_i^{\left( n+ \frac{j}{\eta_i} \right)}$ 
contains all the kinematic unknowns for $i$-th subdomain over its 
subdomain time-step, $\mathbb{X}_i^{(n+1)}$ contains 
all the kinematic unknowns for $i$-th subdomain over 
a system time-step, and the vector $\mathbb{X}^{(n+1)}$ 
contains the kinematic unknowns of all subdomains over 
a system time-step. We define the following augmented 
subdomain signed Boolean matrices: 
\begin{align}
  \mathbb{C}_i := \left[\begin{array}{c|c|c|c|c} 
      \underbrace{\mathcal{O}_i \quad \mathcal{O}_i 
        \quad \mathcal{O}_i}_{1} & 
      \underbrace{\mathcal{O}_i \quad \mathcal{O}_i 
        \quad \mathcal{O}_i}_{2} & 
      \cdots \; \cdots \; \cdots & 
      \underbrace{\mathcal{O}_i \quad \mathcal{O}_i 
        \quad \mathcal{O}_i}_{\eta_i - 1} & 
      \underbrace{\mathcal{O}_i \quad \boldsymbol{C}_i 
        \quad \mathcal{O}_i}_{\eta_i}
    \end{array} \right]
\end{align}
where the matrix $\mathcal{O}_i$ contains zeros of the same 
size as $\boldsymbol{C}_i$ (which is $N_C \times N_i$). It 
is evident that the size of $\mathbb{C}_i$ is $N_C \times 
3 \eta_i N_i$. The augmented signed Boolean matrix for the 
entire system is defined as follows:
\begin{align}
  \mathbb{C} := \left[\begin{array}{cccc} 
      \mathbb{C}_1 & \mathbb{C}_2 & \cdots & \mathbb{C}_{S}
      \end{array} \right] 
\end{align}
The size of $\mathbb{C}$ is $N_C \times \left(\sum_{i=1}^{S} 
3 \eta_i N_i\right)$. The following augmented signed Boolean 
matrices will be useful in taking into account the effect of 
interface forces:
\begin{align}
  \mathbb{B}_i^{\mathrm{T}} := \left[\begin{array}{c|c|c|c}
      -\frac{1}{\eta_i} \boldsymbol{C}_{i} \quad \mathcal{O}_i 
      \quad \mathcal{O}_i &
      -\frac{2}{\eta_i} \boldsymbol{C}_{i} \quad \mathcal{O}_i 
      \quad \mathcal{O}_i &
      \cdots \; \cdots \; \cdots & 
      -\frac{\eta_i}{\eta_i} \boldsymbol{C}_i \quad 
      \mathcal{O}_i \quad \mathcal{O}_i
    \end{array}\right]
\end{align}
The corresponding signed Boolean matrix for the entire 
system can be written as follows: 
\begin{align}
  \mathbb{B} := \left[\begin{array}{c}
      \mathbb{B}_{1} \\
      \mathbb{B}_{2} \\
      \vdots \\
      \mathbb{B}_{S} 
    \end{array}\right]
\end{align}
We shall define the following augmented 
matrices for each subdomain:
\begin{align}
    \label{Eqn:Monolithic_L_i_R_i}
    \mathbb{L}_{i} := \left[\begin{array}{ccc} 
        \boldsymbol{M}_{i} & \boldsymbol{O}_i & \boldsymbol{K}_{i} \\
        -\gamma_{i} \Delta t_{i} \boldsymbol{I}_i & \boldsymbol{I}_i & 
        \boldsymbol{O}_i \\
        -\beta_{i} \Delta t_{i}^{2} \boldsymbol{I}_i & 
        \boldsymbol{O}_i & \boldsymbol{I}_i 
      \end{array} \right] \quad
    \mathbb{R}_{i} := \left[\begin{array}{ccc}
        \boldsymbol{O}_i & \boldsymbol{O}_i & \boldsymbol{O}_i \\
         \left( 1 - \gamma_{i} \right)\Delta t_{i} \boldsymbol{I}_i & 
        \boldsymbol{I}_i & \boldsymbol{O}_i \\
        \left(\frac{1}{2} - \beta_{i} \right)\Delta t_{i}^{2} \boldsymbol{I}_i & 
        \Delta t_i \boldsymbol{I}_i & \boldsymbol{I}_i \end{array} \right]
\end{align}
where $\boldsymbol{O}_i$ denotes a matrix containing 
zeros of size $N_i \times N_i$, and $\boldsymbol{I}_i$ 
is the identity matrix of size $N_i \times N_i$.

\subsection{Multi-time-step coupling} The proposed 
multi-time-step coupling method is developed based 
on the following assumptions:
\begin{enumerate}[(A)]
  \item Enforce the continuity of interface velocities 
    at system time-steps. 
  \item The corresponding Lagrange multipliers 
  (which will be interface reactions) are 
  calculated at system time-steps. (It should 
  be noted that the Lagrange multipliers are 
  unknowns,and will be a part of the solution.)
  \item The Lagrange multipliers are interpolated 
  linearly within system time-steps to approximate 
  their values at subdomain time-steps.
  \item The equilibrium equations for each subdomain 
    is enforced at its corresponding subdomain 
    time levels.
\end{enumerate}
with a requirement that the coupling method can handle 
arbitrary number of subdomains. 

Assumptions (B) and (C) take the following mathematical form:
\begin{align}
  \label{Eqn:Lambda_Lin_Intrp}
  \boldsymbol{\lambda}^{\left( n+ \frac{j}{\eta_i} \right)} = 
  \left(1 - \frac{j}{\eta_i} \right) \boldsymbol{\lambda}^{\left( n \right)} 
  + \left(\frac{j}{\eta_i}\right) \boldsymbol{\lambda}^{ \left( n + 1 \right)}
\end{align}
where $\boldsymbol{\lambda}^{(n)}$ and $\boldsymbol{\lambda}^{\left( n+1 \right)}$ 
are Lagrange multipliers at system time levels.
Using equation \eqref{Eqn:Lambda_Lin_Intrp}, Assumption (D) 
takes the following form: 
\begin{align}
  \label{Eqn:Subdomain_Intrp}
  \boldsymbol{M}_i \boldsymbol{a}_i^{\left( n+ \frac{j + 1}{\eta_i} \right)} + 
  \boldsymbol{K}_i \boldsymbol{d}_i ^ {\left( n+ \frac{j + 1}{\eta_i} \right)} -
  \frac{j+1}{\eta_i} \boldsymbol{C}_i^{\mathrm{T}} 
  \left( \boldsymbol{\lambda}^{\left( n + 1 \right)} - 
  \boldsymbol{\lambda}^{\left( n \right)}\right) = 
  \boldsymbol{f}_i ^ {\left( n+ \frac{j + 1}{\eta_i} \right)} + 
  \boldsymbol{C}_i^{\mathrm{T}} \boldsymbol{\lambda}^{\left( n \right)}
\end{align}
and the relations for the time-stepping schemes for 
the $i$-th subdomain take the following form: 
\begin{subequations}
  \begin{align}
    \label{Eqn:Newmark_d_subdomain}
    & \boldsymbol{d}_{i}^{\left( n+ \frac{j + 1}{\eta_i} \right)} = 
    \boldsymbol{d}_{i}^{\left( n+ \frac{j}{\eta_i} \right)} + \Delta t_i 
    \boldsymbol{v}_i^{\left( n+ \frac{j}{\eta_i} \right)} + 
    \frac{\left(\Delta t_i\right)^2}{2}
    \left(\left(1 - 2 \beta_i\right) \boldsymbol{a}_{i}^{\left( n+ \frac{j}{\eta_i} \right)} 
    + 2 \beta_i \boldsymbol{a}_{i}^{\left( n+ \frac{j + 1}{\eta_i} \right)}\right) \\
    \label{Eqn:Newmark_v_subdomain}
    & \boldsymbol{v}_{i}^{\left( n+ \frac{j + 1}{\eta_i} \right)} = 
    \boldsymbol{v}_{i}^{\left( n+ \frac{j}{\eta_i} \right)} + \Delta t_i 
    \left(\left(1 - \gamma_i\right) \boldsymbol{a}_{i}^{\left( n+ \frac{j}{\eta_i} \right)} 
    + \gamma_i\boldsymbol{a}_{i}^{\left( n+ \frac{j + 1}{\eta_i} \right)}\right)
  \end{align}
\end{subequations}
where $\beta_i$ and $\gamma_i$ are the Newmark parameters 
for the $i$-th subdomain.
Assumption (A) takes the following mathematical form: 
\begin{align}
  \sum_{i=1}^{S} \mathbb{C}_i \mathbb{X}_i^{\left( n+1 \right)} 
  = \boldsymbol{0}
\end{align}
Or, more compactly, 
\begin{align}
  \mathbb{C} \mathbb{X}^{\left( n+1 \right)} = \boldsymbol{0}
\end{align}

\subsubsection{Advance a subdomain over its subdomain time-step}
Using the above notation, the governing equations to advance 
the state of $i$-th subdomain over its time-step 
can be compactly written as follows:
\begin{align}
  \label{Eqn:Monolithic_Governing_compact_mat}
  \mathbb{L}_{i} \boldsymbol{X}_{i}^{ \left( n + \frac{j+1}{\eta_{i} }\right)} 
  - \left(\frac{j+1}{\eta_i}\right) \widetilde{\mathbb{C}}_{i}^{\mathrm{T}} 
  \left( \boldsymbol{\lambda}^{\left( n+1 \right)} - \boldsymbol{\lambda}^{\left( n \right)}\right)
  = \mathbb{P}_{i}^{\left( n+\frac{j+1}{\eta_{i}}\right)} 
  + \widetilde{\mathbb{C}}_{i}^{\mathrm{T}} 
  \boldsymbol{\lambda}^{\left( n \right)} 
  + \mathbb{R}_{i}\boldsymbol{X}_{i}^{ \left( n + \frac{j}{\eta_{i}} \right)}
\end{align}
where the following notation has been employed:
\begin{align}
  \widetilde{\mathbb{C}}_{i} := \left[\begin{array}{ccc}
      \boldsymbol{C}_{i} & \mathcal{O}_i & \mathcal{O}_i
    \end{array} \right] \quad
  \label{Eqn:Force}
  \mathbb{P}_{i}^{\left( n + \frac{j}{\eta_i} \right)} := 
  \left[ \begin{array}{c}
      \boldsymbol{f}_{i}^{\left( n + \frac{j}{\eta_i} \right)} \\
      \boldsymbol{0} \\
      \boldsymbol{0} \end{array} \right] 
\end{align}

\subsubsection{Advance a subdomain over a system time-step}
The governing equations to advance a subdomain over a 
system time-step can be compactly written as follows: 
\begin{align}
  \mathbb{Q}_i \mathbb{X}_i^{\left( n+1 \right)} + \mathbb{B}_i 
  \left(\boldsymbol{\lambda}^{\left( n+1 \right)} - \boldsymbol{\lambda}^{\left( n \right)}\right) 
  = \mathbb{F}_i^{\left( n+1 \right)}
\end{align}
where the matrix $\mathbb{Q}_i$ is defined as follows:
\begin{align}
  \mathbb{Q}_i := \left[ 
    \begin{array}{c c c c}
      \mathbb{L}_i &  &  &  \\
      -\mathbb{R}_i & \mathbb{L}_i &  &  \\
      &  \ddots &  \ddots & \\
      &  & -\mathbb{R}_i & \mathbb{L}_i \\  
    \end{array} \right]
\end{align}

\subsubsection{Advance all subdomains over a system time-step}
We now write the governing equations to advance all the 
subdomains from (system) time level $t_n$ to $t_{n+1}$ (i.e., 
advance all subdomains by a system time-step) in a compact 
form. The mathematical statement takes the following form: 
Find $\mathbb{X}^{(n+1)}$ and $\boldsymbol{\lambda}^{(n+1)}$ 
by solving the following system of linear equations: 
\begin{align}
  \left[\begin{array}{cc}
      \mathbb{A} & \mathbb{B} \\
      \mathbb{C} & \mathbb{O}
  \end{array} \right]
  \left[\begin{array}{c}
  \mathbb{X}^{(n+1)} \\
  \boldsymbol{\lambda}^{(n+1)} - \boldsymbol{\lambda}^{(n)}
  \end{array}\right]
  =  \left[\begin{array}{c}
  \mathbb{F}^{(n+1)} \\
  \boldsymbol{0}
  \end{array}\right]
\end{align}
where the matrix $\mathbb{A}$ is defined as follows:
\begin{align}
  \label{Eqn:Monolithic_global_A}
  \mathbb{A} := \left[ \begin{array}{cccc}
      \mathbb{Q}_1 &  &  &  \\
      & \mathbb{Q}_2 &  & \\
      & & \ddots &  \\
      & & & \mathbb{Q}_S 
    \end{array} \right]
\end{align}
and the following notation is employed: 
\begin{align}
  \mathbb{F}^{(n+1)} := 
  \left[ \begin{array}{c}
      \mathbb{F}_{1}^{(n+1)} \\
      \mathbb{F}_{2}^{(n+1)} \\
      \vdots \\
      \mathbb{F}_{S}^{(n+1)}
    \end{array} \right] \quad
  \boldsymbol{\mathbb{F}}_i^{(n+1)} := \left[ \begin{array}{c}
      \mathbb{P}_{i}^{\left(n+\frac{1}{\eta_i}\right)} +
      \widetilde{\mathbb{C}}_{i}^{\mathrm{T}}\boldsymbol{\lambda}^{(n)} 
      + \mathbb{R}_{i} \boldsymbol{X}_{i}^{(n)} \\ 
      \mathbb{P}_{i}^{\left(n+\frac{2}{\eta_i}\right)} + 
      \widetilde{\mathbb{C}}_{i}^{\mathrm{T}}\boldsymbol{\lambda}^{(n)} \\ 
      \vdots \\
      \mathbb{P}_{i}^{(n+1)} + \widetilde{\mathbb{C}}_{i}^{\mathrm{T}}
      \boldsymbol{\lambda^{(n)}} 
    \end{array} \right]
\end{align}
   
\subsection{Comments on the derivation of the PH method 
in Reference \cite{Prakash_Hjelmstad_IJNME_2004_v61_p2183}}
One main assumption in deriving the PH method is 
that the acceleration, velocity and displacement 
\emph{all} vary linearly with time within a system 
time-step. It should be emphasized that such an 
assumption is \emph{not self-consistent}. Moreover, 
this assumption need not be 
consistent with the underlying time stepping scheme. To 
wit, the assumption made in deriving the PH method takes 
the following mathematical form: 
\begin{subequations}
  \begin{align}
    \label{Eqn:Monolithic_PH_acc} 
    &\boldsymbol{a}_i^{\left( n + \frac{j}{\eta_i} \right)} = \left(1 - \frac{j}{\eta_i}\right) 
    \boldsymbol{a}_i^{\left( n \right)} + \frac{j}{\eta_i} \boldsymbol{a}_i^{\left( n + 1 \right)} \\
    &\boldsymbol{v}_i^{\left( n + \frac{j}{\eta_i} \right)} = \left(1 - \frac{j}{\eta_i}\right) 
    \boldsymbol{v}_i^{\left( n \right)} + \frac{j}{\eta_i} \boldsymbol{v}_i^{\left( n + 1 \right)} \\
    \label{Eqn:Monolithic_PH_disp} 
    &\boldsymbol{d}_i^{\left( n + \frac{j}{\eta_i} \right)} = \left(1 - \frac{j}{\eta_i}\right) 
    \boldsymbol{d}_i^{\left( n \right)} + \frac{j}{\eta_i} \boldsymbol{d}_i^{\left( n + 1 \right)}
  \end{align}
\end{subequations}
Let us consider equation \eqref{Eqn:Monolithic_PH_acc}, 
which can be interpreted as follows: 
\begin{align}
  \label{Eqn:Monolithic_PH_other_acc}
  \boldsymbol{a}_i(t) = \boldsymbol{a}_i^{\left( n \right)} + 
  \frac{\left( t - t_n \right)}{\left( t_{n+1} - t_n \right)} \left(
  \boldsymbol{a}_i^{\left( n + 1 \right)} - \boldsymbol{a}_i^{\left( n \right)}
  \right) \quad t_n \leq t \leq t_{n+1}
\end{align}
If the acceleration varies linearly with the time, the velocity 
should vary quadratically with the time, and the displacement 
should vary cubic with the time. Hence, equations 
\eqref{Eqn:Monolithic_PH_acc}--\eqref{Eqn:Monolithic_PH_disp} 
are \emph{not} inherently consistent.

In addition, this assumption need not be consistent with the 
underlying time stepping scheme, which is typically derived 
by assuming an ansatz functional form for the variation of the 
acceleration, velocity or displacement with respect to the time. 
For example, Newmark average acceleration scheme $(\gamma = 1/2, 
\beta = 1/4)$ is constructed by assuming that the acceleration 
is constant within a time-step \cite{Wood}. 
The assumption made in deriving the PH method that the acceleration 
varies linearly with time within a system time step (i.e., equation 
\eqref{Eqn:Monolithic_PH_acc} or \eqref{Eqn:Monolithic_PH_other_acc}) 
will not be consistent if, say, one employs the Newmark average 
acceleration scheme under the multi-time-step coupling method. 
More importantly, as shown in the previous section, such 
a mathematically \emph{in}consistent assumption is not 
warranted to develop a multi-time-step coupling method. 
Also, the multi-time-step coupling method as presented in 
Reference \cite{Prakash_Hjelmstad_IJNME_2004_v61_p2183} 
is restricted to two subdomains. There is no restriction 
on the number of subdomains in the proposed multi-time-step 
coupling method. 

\begin{remark}
  As mentioned earlier, the PH method (as presented in 
  Reference \cite{Prakash_Hjelmstad_IJNME_2004_v61_p2183}) 
  can handle only two subdomains. Preference is given 
  to the subdomain that has the coarsest time-step.
  For example, in the final form of the PH method 
  (see \cite[equation 43]{Prakash_Hjelmstad_IJNME_2004_v61_p2183}), 
  the forcing function to advance subdomain $B$ 
  uses $\mathbf{S}_i$, which is based on the 
  quantities of subdomain $A$. But the forcing 
  function to advance subdomain $A$ does not 
  employ any quantities of subdomain $B$.
  Recently, a tree-based approach has been proposed 
  in Reference \cite{Prakash_PhDThesis_UIUC} that 
  combines two subdomains at a time to solve multiple 
  subdomains, which will be computationally intensive.
  In the case of two subdomains (i.e., $S = 2$), 
  the proposed coupling method will be same as 
  the PH method if the applied external forces 
  on the subdomain with the coarse time-step 
  is affine with respect to time. The proposed 
  coupling method, however, can handle multiple 
  subdomains, and does not give preference to 
  any subdomain. It should be emphasized that 
  if one wants to implement in a recursive 
  manner using a tree-based approach, the 
  proposed method is amenable. 
\end{remark}

%
\section{A THEORETICAL ANALYSIS OF THE PROPOSED COUPLING METHOD}
\label{Sec:S5_Monolithic_Stability}
\subsection{Stability analysis using the energy method}
We shall employ the energy method to show the stability of 
the proposed multi-time-step coupling method. The energy 
method is a popular strategy employed in Mathematical 
Analysis to derive estimates and to perform stability 
analysis. The method is widely employed in the theory 
of partial differential equations \cite{Evans_PDE}, 
and numerical analysis \cite{Richtmyer_Morton,Hughes}.
The basic idea behind the energy method is to choose an 
appropriate norm (which is referred to as the energy norm) 
and show that the solution is bounded under this norm. It 
should be noted that the energy norm may not correspond 
to the physical energy. 

We shall now introduce the notation that is needed to 
apply the energy method. The jump and average operators 
over the system time-step are, respectively, denoted by 
$\llbracket\cdot\rrbracket$ and $\llangle\cdot\rrangle$. 
That is, 
\begin{subequations}
  \label{Eqn:Monolithic_system_operators}
  \begin{align}
    \label{Eqn:Monolithic_system_jump}
    \left\llbracket \boldsymbol{x}^{(n)} \right\rrbracket 
    &:= \boldsymbol{x}^{(n+1)} - \boldsymbol{x}^{(n)} \\
    \label{Eqn:Monolithic_system_average}
    \left\llangle \boldsymbol{x}^{(n)} \right\rrangle &:= 
    \frac{1}{2} \left(\boldsymbol{x}^{(n)} + \boldsymbol{x}^{(n+1)}
    \right)  
  \end{align}
\end{subequations}
The jump and average operators over the subdomain time-step 
of the $i$-th subdomain are, respectively, denoted by $\left
[\cdot\right]_i$ and $\langle\cdot\rangle_i$. That is, 
\begin{subequations}
  \label{Eqn:Monolithic_subdomain_operators}
  \begin{align}
    \label{Eqn:Monolithic_subdomain_jump}
    \left[\boldsymbol{x}^{\left( n + \frac{j}{\eta_i)} \right)} \right]_i &:= 
    \boldsymbol{x}^{\left( n+ \frac{j+1}{\eta_i} \right)} - 
    \boldsymbol{x}^{\left( n + \frac{j}{\eta_i} \right)} \\
    \label{Eqn:Monolithic_subdomain_average}
    \left\langle \boldsymbol{x}^{\left( n + \frac{j}{\eta_i} \right)} \right\rangle_i 
    &:= \frac{1}{2} \left(\boldsymbol{x}^{\left( n + \frac{j}{\eta_i} \right)} 
    + \boldsymbol{x}^{\left( n + \frac{j+1}{\eta_i} \right)}\right)  
  \end{align}
\end{subequations}
It is easy to show that, for any symmetric matrix $\boldsymbol{S}$, 
the jump and average operators obey the following relationship:
\begin{align}
  \left\llbracket \boldsymbol{x}^{(n)}\right\rrbracket^{\mathrm{T}} 
  \boldsymbol{S} \left\llangle \boldsymbol{x}^{(n)}\right\rrangle = 
  \frac{1}{2} \left\llbracket {\boldsymbol{x}^{(n)}}^{\mathrm{T}} 
  \boldsymbol{S} \boldsymbol{x}^{(n)}\right\rrbracket
\end{align}
A similar relation holds for $\left[\cdot\right]_i$ and 
$\langle \cdot \rangle_i$. It is important to note that 
the jump and average operators are linear. That is, for 
any $\alpha, \beta \in \mathbb{R}$ we have 
\begin{subequations}
  \begin{align}
    &\llbracket \alpha \boldsymbol{x} + \beta \boldsymbol{y} 
    \rrbracket = \alpha \llbracket \boldsymbol{x} \rrbracket 
    + \beta \llbracket \boldsymbol{y} \rrbracket \\
    &\llangle \alpha \boldsymbol{x} + \beta \boldsymbol{y} 
    \rrangle = \alpha \llangle \boldsymbol{x} \rrangle + 
    \beta \llangle \boldsymbol{y} \rrangle
  \end{align}
\end{subequations}
We shall call a sequence of vectors $\left\{\boldsymbol{x}^{(n)}
\right\}_{n = 0}^{\infty}$ to be bounded $\forall n$ if there 
exists a real number $0 < M  <+\infty$ such that
\begin{align}
  \left\|\boldsymbol{x}^{(n)}\right\| < M \quad \forall n
\end{align}

For convenience, we shall use $\boldsymbol{A}_i$ to denote 
\begin{align}
  \boldsymbol{A}_i := \boldsymbol{M}_i + \left(\Delta t_i\right)^2 
  \left(\beta_i - \frac{\gamma_i}{2}\right) \boldsymbol{K}_i
\end{align}
The critical time-step $\Delta t_i^{\mathrm{crit}} \geq 0$ in 
the $i$-th subdomain is the maximum time-step for which the 
matrix $\boldsymbol{A}_i$ is positive definite. 
It should be emphasized that $\Delta t_i^{\mathrm{crit}}$ is 
the critical subdomain time-step assuming that there is no 
coupling between subdomains, which can be easily calculated. 
Let $\omega_i^{\mathrm{max}}$ be the maximum eigenvalue 
of the generalized eigenvalue problem for the $i$-th 
subdomain. That is, 
\begin{align}
  \omega_i^2 \boldsymbol{M}_i \boldsymbol{x}_i = 
  \boldsymbol{K}_i \boldsymbol{x}_i
\end{align}
where $\boldsymbol{x}_i$ is the corresponding eigenvector. 
Then the critical time-step for the $i$-th subdomain can 
be written as follows:
\begin{align}
  \Delta t_i^{\mathrm{crit}} = \left\{
  \begin{array}{ll}
    +\infty & \mbox{for} \; 2 \beta_i \geq \gamma_i \geq 1/2 \\
    \frac{1}{\omega_i^{\mathrm{max}} \sqrt{\gamma_i/2 - \beta_i}} 
    & \mbox{for} \; \gamma_i \geq 1/2 \; \mbox{and} \; \beta_i < \gamma_i/2
  \end{array}
  \right.
\end{align}
We shall choose the subdomain time-step to be smaller than 
the corresponding critical time-step for the subdomain. 
That is, 
\begin{align}
  \Delta t_i < \Delta t_i^{\mathrm{crit}} 
\end{align}
A detailed discussion on the critical time-steps for 
Newmark family of time integrators can be found in 
references \cite{Hughes,Wood}.
For Newmark family of time stepping schemes, 
it is easy to check the following identities:
\begin{subequations}
\begin{align}
\label{Eqn:Monolithic_jump_v_subdomain}
  &\left[\boldsymbol{v}_i^{\left(n + \frac{j}{\eta_i}\right)}\right]_{i} = 
  \Delta t_{i} \left(\left\langle \boldsymbol{a}_i^{\left(n + \frac{j}{\eta_i} 
    \right)}\right\rangle_{i} + \left(\gamma_{i} - \frac{1}{2}\right)
  \left[\boldsymbol{a}_i^{\left(n + \frac{j}{\eta_i}\right)}\right]_{i} \right) \\
  \label{Eqn:Monolithic_jump_d_subdomain}
  &\left[\boldsymbol{d}_i^{\left(n + \frac{j}{\eta_i}\right)}\right]_{i} = 
  \Delta t_{i} \left\langle\boldsymbol{v}_{i}^{\left(n + \frac{j}{\eta_{i}}
    \right)}\right\rangle_i + \Delta t_{i}^{2}  \left(\beta_{i} - 
  \frac{\gamma_i}{2}\right)\left[\boldsymbol{a}_i^{\left(n + \frac{j}{\eta_i}
      \right)}\right]_{i}
\end{align}
\end{subequations}

\begin{theorem}
  If $\Delta t_i < \Delta t_i^{\mathrm{crit}}$ in all subdomains, 
  then the velocity and acceleration vectors for all subdomains 
  are bounded $\forall n$ under the proposed multi-time-step 
  coupling method.
\end{theorem}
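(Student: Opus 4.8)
The plan is to run the energy method directly on the coupled multi-rate scheme, keeping the contributions of \emph{all} $S$ subdomains and of the interface simultaneously. First I would attach to the $i$-th subdomain, at every subdomain time level $m = n + j/\eta_i$, a discrete energy
\[
E_i^{(m)} := \tfrac12 \big(\boldsymbol{v}_i^{(m)}\big)^{\mathrm T}\boldsymbol{M}_i\boldsymbol{v}_i^{(m)} + \tfrac12 \big(\boldsymbol{d}_i^{(m)}\big)^{\mathrm T}\boldsymbol{K}_i\boldsymbol{d}_i^{(m)} - \tfrac12 (\Delta t_i)^2\Big(\tfrac{\gamma_i}{2}-\beta_i\Big)\big(\boldsymbol{a}_i^{(m)}\big)^{\mathrm T}\boldsymbol{M}_i\boldsymbol{a}_i^{(m)},
\]
and observe that, once $\boldsymbol{a}_i^{(m)}$ is eliminated via the subdomain equation of motion, the hypothesis $\Delta t_i < \Delta t_i^{\mathrm{crit}}$ --- which is precisely the condition making $\boldsymbol{A}_i$ positive definite --- renders $E_i^{(m)}$ nonnegative and coercive in $\boldsymbol{v}_i^{(m)}$ (and in $\boldsymbol{K}_i^{1/2}\boldsymbol{d}_i^{(m)}$).

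Next, for one subcycle step of subdomain $i$ (from $m$ to $m+1/\eta_i$) I would take the inner product of the subdomain equilibrium relation \eqref{Eqn:Subdomain_Intrp} with the Newmark ``energy test vector'' --- the combination of $\langle \boldsymbol{v}_i^{(m)}\rangle_i$ and $[\boldsymbol{a}_i^{(m)}]_i$ singled out by the identities \eqref{Eqn:Monolithic_jump_v_subdomain}--\eqref{Eqn:Monolithic_jump_d_subdomain} --- and reorganize the result into an \emph{exact} local energy balance
\[
E_i^{(m+1/\eta_i)} - E_i^{(m)} = -\big(\gamma_i - \tfrac12\big)(\Delta t_i)^2\big\|[\boldsymbol{a}_i^{(m)}]_i\big\|_{\boldsymbol{M}_i}^2 + W_i^{(m)} + G_i^{(m)},
\]
where $W_i^{(m)}$ is the work of the linearly interpolated interface reaction $\boldsymbol{C}_i^{\mathrm T}\boldsymbol{\lambda}^{(m+1/\eta_i)}$ (via \eqref{Eqn:Lambda_Lin_Intrp}) on that subcycle's displacement increment, and $G_i^{(m)}$ gathers the given external-force work. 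Summing over $j = 0,\dots,\eta_i-1$ telescopes the left side to $E_i^{(n+1)} - E_i^{(n)}$, makes the dissipative term a manifestly nonpositive sum, and --- through the augmented operators $\mathbb{B}_i$ and $\widetilde{\mathbb{C}}_i$ --- recasts the accumulated interface work over the system step in terms of $\boldsymbol{\lambda}^{(n)}$, $\boldsymbol{\lambda}^{(n+1)}$ and system-level kinematic quantities.

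The decisive step is then to sum over all subdomains $i = 1,\dots,S$ and show that $\sum_{i=1}^S\sum_{j=0}^{\eta_i-1}W_i^{(m)}$ is nonpositive --- in fact zero for this interpolation. This cannot be a term-by-term cancellation, because the velocity continuity $\mathbb{C}\mathbb{X}^{(n+1)} = \boldsymbol{0}$ is imposed \emph{only} at system time levels (Assumption~(A)), whereas the multipliers live at every subdomain level through \eqref{Eqn:Lambda_Lin_Intrp}; instead one performs a summation by parts in the subcycle index $j$ --- this is exactly the role of the weights $-\tfrac{j}{\eta_i}\boldsymbol{C}_i$ in $\mathbb{B}_i^{\mathrm T}$ --- after which every residual coupling takes the form $\big(\boldsymbol{\lambda}^{(n+1)}-\boldsymbol{\lambda}^{(n)}\big)^{\mathrm T}\sum_i \boldsymbol{C}_i(\cdot)^{(n+1)}$ or its counterpart at $t_n$, and vanishes by the system-level constraint. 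This is precisely the interface bookkeeping that, as noted in Section~\ref{Sec:Monolithic_Introduction}, the GC and PH stability analyses fail to carry out in the correct form, and it is where I expect essentially all the difficulty to lie.

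Granting this, one gets $\sum_{i=1}^S E_i^{(n+1)} \le \sum_{i=1}^S E_i^{(n)} + (\text{external-force work})$. For $\boldsymbol{f}_i \equiv \boldsymbol{0}$ the total discrete energy is nonincreasing, hence bounded by its initial value, so by coercivity of each $E_i$ the velocities $\boldsymbol{v}_i^{(n)}$ are bounded $\forall n$; the accelerations $\boldsymbol{a}_i^{(n)}$ are then bounded from $\boldsymbol{M}_i\boldsymbol{a}_i^{(n)} = \boldsymbol{f}_i^{(n)} + \boldsymbol{C}_i^{\mathrm T}\boldsymbol{\lambda}^{(n)} - \boldsymbol{K}_i\boldsymbol{d}_i^{(n)}$, using $\boldsymbol{M}_i$ positive definite, $\boldsymbol{K}_i\boldsymbol{d}_i^{(n)}$ controlled by the energy, and $\boldsymbol{\lambda}^{(n)}$ recovered from the (well-posed, Schur-complement) interface equation of the scheme in terms of already-bounded data. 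Bounded or decaying forcing is finally absorbed by a standard discrete Gr\"onwall inequality applied to $\sum_i E_i^{(n)}$.
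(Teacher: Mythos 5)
Your setup (a discrete energy per subdomain, a local balance per subcycle, telescoping in $j$, then summation over subdomains with the interface handled globally) is the right template, but the step you yourself flag as decisive fails. The accumulated interface work $\sum_{i=1}^{S}\sum_{j=0}^{\eta_i-1}W_i^{(m)}$, with $W_i^{(m)}$ the work of the interpolated multiplier on the displacement increment $[\boldsymbol{d}_i^{(m)}]_i$, is exactly the quantity $\mathcal{E}_{\mathrm{interface}}^{(n\rightarrow n+1)}$ of equation \eqref{Eqn:Monolithic_E_interface}, and it is \emph{not} zero, nor of one sign, under subcycling: Section \ref{Sec:S7_Monolithic_Energy} shows it vanishes only under the sufficient condition $\gamma_i=\gamma$, $\gamma_i=2\beta_i$ and $\eta_i=1$ for all $i$, and the numerical examples there exhibit $\mathcal{E}_{\mathrm{interface}}\neq 0$ precisely when there is subcycling. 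Your proposed rescue---summation by parts in $j$ so that the residual couplings pair $\boldsymbol{\lambda}^{(n+1)}-\boldsymbol{\lambda}^{(n)}$ with $\sum_i\boldsymbol{C}_i(\cdot)^{(n+1)}$---does not close either: Abel summation of $\sum_j\boldsymbol{\lambda}^{\mathrm{T}}\boldsymbol{C}_i[\boldsymbol{d}_i]_i$ leaves boundary and interior terms in the \emph{displacements} $\sum_i\boldsymbol{C}_i\boldsymbol{d}_i$, whereas the scheme only constrains $\sum_i\boldsymbol{C}_i\boldsymbol{v}_i$ at system time levels; the displacements drift (subsection \ref{Subsec:Monolithic_bounds_on_drifts}). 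So a physical-energy Lyapunov function cannot prove this theorem. Two further soft spots: the claimed coercivity of $E_i^{(m)}$ after eliminating $\boldsymbol{a}_i^{(m)}$ uses $\boldsymbol{M}_i\boldsymbol{a}_i=-\boldsymbol{K}_i\boldsymbol{d}_i+\boldsymbol{C}_i^{\mathrm{T}}\boldsymbol{\lambda}$, so the interface force contaminates the quadratic form; and your final bound on $\boldsymbol{a}_i^{(n)}$ routes through $\boldsymbol{\lambda}^{(n)}$, whose boundedness is nowhere established.

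The paper's proof avoids all of this by changing the test vector, and hence the energy norm. Taking the jump of the equilibrium equation over one subdomain step gives $\boldsymbol{M}_i[\boldsymbol{a}_i]_i+\boldsymbol{K}_i[\boldsymbol{d}_i]_i=\frac{1}{\eta_i}\boldsymbol{C}_i^{\mathrm{T}}\llbracket\boldsymbol{\lambda}^{(n)}\rrbracket$ (the factor $1/\eta_i$ coming from the linear interpolation of $\boldsymbol{\lambda}$), and testing with the velocity \emph{jump} $[\boldsymbol{v}_i]_i$ rather than the average yields, after the Newmark identities \eqref{Eqn:Monolithic_jump_v_subdomain}--\eqref{Eqn:Monolithic_jump_d_subdomain}, a decay inequality for $\sum_i\bigl({\boldsymbol{a}_i}^{\mathrm{T}}\boldsymbol{A}_i\boldsymbol{a}_i+{\boldsymbol{v}_i}^{\mathrm{T}}\boldsymbol{K}_i\boldsymbol{v}_i\bigr)$ whose interface term is $\frac{1}{\eta_i}\llbracket\boldsymbol{\lambda}^{(n)}\rrbracket^{\mathrm{T}}\boldsymbol{C}_i[\boldsymbol{v}_i]_i$; multiplying by $\eta_i$ and telescoping in $j$ turns this into $\llbracket\boldsymbol{\lambda}^{(n)}\rrbracket^{\mathrm{T}}\boldsymbol{C}_i\llbracket\boldsymbol{v}_i^{(n)}\rrbracket$, which sums to zero over $i$ precisely because velocity continuity is enforced at both system time levels. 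The hypothesis $\Delta t_i<\Delta t_i^{\mathrm{crit}}$ makes $\boldsymbol{A}_i$ positive definite, so this (nonphysical) energy norm is coercive in $\boldsymbol{a}_i$ directly---no detour through $\boldsymbol{\lambda}$---and controls $\boldsymbol{v}_i$ up to the null space of $\boldsymbol{K}_i$. To salvage your write-up, replace your $E_i^{(m)}$ by ${\boldsymbol{a}_i}^{\mathrm{T}}\boldsymbol{A}_i\boldsymbol{a}_i+{\boldsymbol{v}_i}^{\mathrm{T}}\boldsymbol{K}_i\boldsymbol{v}_i$ and your test vector by $[\boldsymbol{v}_i]_i$.
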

\begin{proof}
  Using the governing equation for the $i$-th subdomain, and 
  the linear interpolation of the Lagrange multiplier, we obtain 
  the following equation:
  \begin{align}
    \boldsymbol{M}_{i} \left[\boldsymbol{a}_i^{\left(n + 
        \frac{j}{\eta_i} \right)}\right]_{i} + \boldsymbol{K}_{i} 
    \left[\boldsymbol{d}_i^{\left(n + \frac{j}{\eta_i}\right)}\right]_{i} 
    = \frac{1}{\eta_{i}}\boldsymbol{C}_{i}^{\mathrm{T}}\left\llbracket 
    \boldsymbol{\lambda}^{(n)}\right\rrbracket
  \end{align}
  Using equation \eqref{Eqn:Monolithic_jump_d_subdomain}, 
  the above equation can be rewritten as follows:
  \begin{align}
    \boldsymbol{A}_{i} \left[\boldsymbol{a}_i^{\left(n + 
        \frac{j}{\eta_i} \right)}\right]_{i} 
    + \Delta t_i \boldsymbol{K}_{i} \left\langle \boldsymbol{v}_i^{
      \left(n + \frac{j}{\eta_i}\right)}\right\rangle_{i} 
    = \frac{1}{\eta_{i}}\boldsymbol{C}_{i}^{\mathrm{T}}\left
    \llbracket \boldsymbol{\lambda}^{(n)}\right\rrbracket
  \end{align}
  Premultiplying both sides by $\left[\boldsymbol{v}_i^{\left(n + \frac{j}{\eta_i}\right)}\right]_{i}$ 
  and using equation \eqref{Eqn:Monolithic_jump_v_subdomain}, we obtain the following equation:
\begin{align}
  \Delta t_i \left\langle \boldsymbol{a}_i^{\left( n + \frac{j}{\eta_i} \right)}
  \right\rangle_{i}^{\mathrm{T}} \boldsymbol{A}_i \left[\boldsymbol{a}_i^{
      \left(n + \frac{j}{\eta_i} \right)}\right]_{i} 
  &+ \Delta t_i \left(\gamma_i - \frac{1}{2}\right) \left[ \boldsymbol{a}_i^{
      \left( n + \frac{j}{\eta_i} \right)}
    \right]_{i}^{\mathrm{T}} \boldsymbol{A}_i \left[\boldsymbol{a}_i^{
      \left(n + \frac{j}{\eta_i} \right)}\right]_{i} \\ \nonumber
  &+ 
  \Delta t_{i} \left[\boldsymbol{v}_i^{\left(n + \frac{j}{\eta_i}\right)}
    \right]_{i}^{\mathrm{T}}\boldsymbol{K}_{i} \left \langle 
  \boldsymbol{v}_{i}^{\left(n + \frac{j}{\eta_i}\right)}\right\rangle_{i} 
  = \frac{1}{\eta_{i}}\left\llbracket \boldsymbol{\lambda}^{(n)}
  \right \rrbracket^{\mathrm{T}}\boldsymbol{C}_{i} \left[
    \boldsymbol{v}_i^{\left(n + \frac{j}{\eta_i}\right)}\right]_{i}
\end{align}
Since $\gamma \geq \frac{1}{2}$ and $\boldsymbol{A}_i$ is 
positive definite (as $\Delta t_i < \Delta t_i^{\mathrm{crit}}$), 
we can conclude that 
\begin{align}
  \Delta t_i
  \left\langle \boldsymbol{a}_i^{\left( n + \frac{j}{\eta_i} \right)}
    \right\rangle_{i}^{\mathrm{T}} \boldsymbol{A}_i 
  \left[\boldsymbol{a}_i^{\left(n + \frac{j}{\eta_i} \right)}\right]_{i} + 
  \Delta t_{i} \left[\boldsymbol{v}_i^{\left(n + \frac{j}{\eta_i}\right)}
    \right]_{i}^{\mathrm{T}}\boldsymbol{K}_{i} \left\langle
  \boldsymbol{v}_{i}^{\left(n + \frac{j}{\eta_i}\right)}\right\rangle_{i} 
  \leq \frac{1}{\eta_{i}}\left\llbracket \boldsymbol{\lambda}^{(n)}
  \right\rrbracket^{\mathrm{T}}\boldsymbol{C}_{i}\left[
    \boldsymbol{v}_i^{\left(n + \frac{j}{\eta_i}\right)}\right]_{i}
\end{align}
Noting that $\Delta t = \eta_i \Delta t_i$, and the matrices 
$\boldsymbol{A}_i$ and $\boldsymbol{K}_i$ are symmetric, we 
obtain the following:
\begin{align}
  \frac{\Delta t}{2} \left[{\boldsymbol{a}_i^{\left(n + \frac{j}{\eta_i}\right)}}^{\mathrm{T}} 
    \boldsymbol{A}_{i} \boldsymbol{a}_{i}^{\left(n + \frac{j}{\eta_i}\right)}\right]_{i} 
  + \frac{\Delta t}{2} \left[{\boldsymbol{v}_i^{\left(n + \frac{j}{\eta_i}\right)}}^{\mathrm{T}} 
    \boldsymbol{K}_{i} \boldsymbol{v}_{i}^{\left(n + \frac{j}{\eta_i}\right)}\right]_{i} 
  \leq \left\llbracket \boldsymbol{\lambda}^{(n)}\right\rrbracket^{\mathrm{T}}
  \boldsymbol{C}_{i}\left[\boldsymbol{v}_i^{\left(n + \frac{j}{\eta_i}\right)}\right]_{i}
\end{align}
By summing over $j \; (j = 1, \cdots, \eta_i)$ we 
obtain the following: 
\begin{align}
  \frac{\Delta t}{2} \left \llbracket 
       {\boldsymbol{a}_{i}^{\left( n \right)}}^{\mathrm{T}} 
       \boldsymbol{A}_{i} \boldsymbol{a}_{i}^{\left( n \right)} 
       + {\boldsymbol{v}_{i}^{\left( n \right)}}^{\mathrm{T}} 
       \boldsymbol{K}_{i} \boldsymbol{v}_{i}^{\left( n \right)} 
       \right \rrbracket \leq 
              {\left \llbracket \lambda^{\left( n \right)} \right \rrbracket}^{\mathrm{T}} 
              \sum_{i = 1}^{S} \boldsymbol{C}_{i} \left \llbracket \boldsymbol{v}_{i}^{\left( n \right)} 
              \right \rrbracket 
\end{align}
Summing over $i \; (i = 1, \cdots, S)$ and using the 
continuity of velocities at system time-steps, we 
obtain the following inequality:
\begin{align}
  \sum_{i = 1}^{S} \left \llbracket 
      {\boldsymbol{a}_{i}^{\left( n \right)}}^{\mathrm{T}} 
      \boldsymbol{A}_{i} \boldsymbol{a}_{i}^{\left( n \right)} 
      + {\boldsymbol{v}_{i}^{\left( n \right)}}^{\mathrm{T}} 
      \boldsymbol{K}_{i} \boldsymbol{v}_{i}^{\left( n \right)} 
      \right \rrbracket \leq 0
\end{align}
This further implies that 
\begin{align}
  \sum_{i = 1}^{S}
  \left({\boldsymbol{a}_{i}^{(n+1)}}^{\mathrm{T}} \boldsymbol{A}_{i} 
  \boldsymbol{a}_{i}^{(n+1)} + {\boldsymbol{v}_{i}^{(n+1)}}^{\mathrm{T}} 
  \boldsymbol{K}_{i}\boldsymbol{v}_{i}^{(n+1)} \right) 
  &\leq \sum_{i = 1}^{S}\left( {\boldsymbol{a}_{i}^{(n)}}^{\mathrm{T}} 
  \boldsymbol{A}_{i} \boldsymbol{a}_{i}^{(n)} + 
  {\boldsymbol{v}_{i}^{(n)}}^{\mathrm{T}} \boldsymbol{K}_{i}\boldsymbol{v}_{i}^{(n)} 
  \right) \nonumber \\
&\leq ... \leq \sum_{i = 1}^{S} 
\left( {\boldsymbol{a}_{i}^{(0)}}^{\mathrm{T}} \boldsymbol{A}_{i} 
\boldsymbol{a}_{i}^{(0)} + {\boldsymbol{v}_{i}^{(0)}}^{\mathrm{T}} 
\boldsymbol{K}_{i}\boldsymbol{v}_{i}^{(0)} \right)
\end{align}
Since the matrices $\boldsymbol{A}_i \; (i = 1, \cdots, S)$ are 
positive definite, the matrices $\boldsymbol{K}_i \; (i = 1, \cdots, S)$ 
are positive semidefinite, and the vectors $\boldsymbol{v}_i^{(0)}$ 
and $\boldsymbol{a}_i^{(0)}$ are bounded, one can conclude that 
the vectors $\boldsymbol{a}_i^{(n)}$ and $\boldsymbol{v}_i^{(n)}$ 
are bounded $\forall n$ and for all subdomains.
\end{proof}

\begin{remark}
  Strictly speaking, in the above proof, one can only conclude 
  that $\boldsymbol{v}_i^{(n)}$ are bounded except for vectors 
  that have a component in the null space of $\boldsymbol{K}_i$. 
  This is the case even for the undecomposed case (i.e., no 
  coupling) under the energy method.
\end{remark}

\subsection{Influence of perturbations under the proposed coupling method}
We shall perform the analysis assuming no subcycling. We 
will follow a procedure similar to the one presented in 
\cite{Hairer_Wanner} for differential-algebraic equations. 
We shall begin with the original system of equations over 
a (system) time-step:
\begin{subequations}
  \begin{align}
    \label{Eqn:Original_Subdomain_GE}
    &\boldsymbol{M}_i \boldsymbol{a}_i^{\left(n+1\right)} + 
    \boldsymbol{K}_i \boldsymbol{d}_i ^ {\left(n+1\right)} =
    \boldsymbol{f}_i ^ {\left(n+1\right)} + \boldsymbol{C}_i^{\mathrm{T}} 
    \boldsymbol{\lambda}^{\left(n+1\right)} \\
    \label{Eqn:Original_Newmark_v_subdomain}
    & \boldsymbol{v}_{i}^{\left(n+1\right)} = 
    \boldsymbol{v}_{i}^{\left(n\right)} + \Delta t 
    \left(\left(1 - \gamma_i\right) \boldsymbol{a}_{i}^{\left(n\right)} 
    + \gamma_i\boldsymbol{a}_{i}^{\left(n+1\right)}\right) \\
    \label{Eqn:Original_Newmark_d_subdomain}
    & \boldsymbol{d}_{i}^{\left(n+1\right)} = 
    \boldsymbol{d}_{i}^{\left(n \right)} + \Delta t  
    \boldsymbol{v}_i^{\left(n\right)} + 
    \frac{\Delta t^2}{2}\left(\left(1 - 2 \beta_i\right) 
    \boldsymbol{a}_{i}^{\left(n\right)} + 2 \beta_i 
    \boldsymbol{a}_{i}^{\left(n+1\right)}\right) \\
   \label{Eqn:Original_Constraint}
    &\sum_{i=1}^{S} \boldsymbol{C}_i \boldsymbol{v}_i^{(n+1)} = 
    \boldsymbol{0}
  \end{align}
\end{subequations}
Now consider the following perturbed system: 
\begin{subequations}
  \begin{align}
    \label{Eqn:Perturbed_Subdomain_GE}
    &\boldsymbol{M}_i \widehat{\boldsymbol{a}}_i^{\left(n+1\right)} + 
    \boldsymbol{K}_i \widehat{\boldsymbol{d}_i}^{\left(n+1\right)} =
    \boldsymbol{f}_i ^ {\left(n+1\right)} + \boldsymbol{C}_i^{\mathrm{T}} 
    \widehat{\boldsymbol{\lambda}}^{\left(n+1\right)} \\
    \label{Eqn:Perturbed_Newmark_v_subdomain}
    & \widehat{\boldsymbol{v}}_{i}^{\left(n+1\right)} = 
    \widehat{\boldsymbol{v}}_{i}^{\left(n\right)} + \Delta t 
    \left(\left(1 - \gamma_i\right) \widehat{\boldsymbol{a}}_{i}^{\left(n\right)} 
    + \gamma_i \widehat{\boldsymbol{a}}_{i}^{\left(n+1\right)}\right) + 
    \Delta t \boldsymbol{\varepsilon}_{v_i} \\
    \label{Eqn:Perturbed_Newmark_d_subdomain}
    & \widehat{\boldsymbol{d}}_{i}^{\left(n+1\right)} = 
    \widehat{\boldsymbol{d}}_{i}^{\left(n \right)} + \Delta t  
    \widehat{\boldsymbol{v}}_i^{\left(n\right)} + 
    \frac{\Delta t^2}{2} \left(\left(1 - 2 \beta_i\right) 
    \widehat{\boldsymbol{a}}_{i}^{\left(n\right)} 
    + 2 \beta_i \widehat{\boldsymbol{a}}_{i}^{\left(n+1\right)}\right) 
    + \Delta t^2 \boldsymbol{\varepsilon}_{d_i} \\
   \label{Eqn:Perturbed_Constraint}
    &\sum_{i=1}^{S} \boldsymbol{C}_i \widehat{\boldsymbol{v}}_i^{(n+1)} 
    = \boldsymbol{\varepsilon}_{\lambda}
  \end{align}
\end{subequations}
where $\boldsymbol{\varepsilon}_{v_i}$, 
$\boldsymbol{\varepsilon}_{d_i}$ and $\boldsymbol{\varepsilon}_{\lambda}$ 
are, respectively, the perturbations to the original system of equations 
\eqref{Eqn:Original_Subdomain_GE}--\eqref{Eqn:Original_Constraint}. 
The solution to this perturbed system of equations 
will be $\widehat{\boldsymbol{a}}_i^{(n+1)}$, 
$\widehat{\boldsymbol{v}}_i^{(n+1)}$, 
$\widehat{\boldsymbol{d}}_i^{(n+1)}$ and 
$\widehat{\boldsymbol{\lambda}}^{(n+1)}$. 
For convenience, we shall define the 
following quantities: 
\begin{subequations}
  \begin{align}
    &\delta \boldsymbol{a}_i^{(n+1)} := 
    \widehat{\boldsymbol{a}}_i^{(n+1)} 
    - \boldsymbol{a}_i^{(n+1)} \\
    &\delta \boldsymbol{v}_i^{(n+1)} := 
    \widehat{\boldsymbol{v}}_i^{(n+1)} 
    - \boldsymbol{v}_i^{(n+1)} \\
    &\delta \boldsymbol{d}_i^{(n+1)} := 
    \widehat{\boldsymbol{d}}_i^{(n+1)} 
    - \boldsymbol{d}_i^{(n+1)} \\
    &\delta \boldsymbol{\lambda}^{(n+1)} := 
    \widehat{\boldsymbol{\lambda}}^{(n+1)} 
    - \boldsymbol{\lambda}^{(n+1)} 
  \end{align}
\end{subequations}
By subtracting equation \eqref{Eqn:Original_Subdomain_GE} 
from equation \eqref{Eqn:Perturbed_Subdomain_GE} we obtain 
the following:
\begin{align}
  \label{Eqn:Difference_Subdomain_GE}
  \boldsymbol{M}_i \delta \boldsymbol{a}_i^{\left(n+1\right)} + 
  \boldsymbol{K}_i \delta \boldsymbol{d}_i^{\left(n+1\right)} = 
  \boldsymbol{C}_i^{\mathrm{T}} \delta 
  \boldsymbol{\lambda}^{\left(n+1\right)} 
\end{align}
Using equations \eqref{Eqn:Original_Newmark_d_subdomain} 
and \eqref{Eqn:Perturbed_Newmark_d_subdomain}, the above 
equation can be written as follows:
\begin{align}
  \label{Eqn:Perturbed_Intermediate}
  \delta \boldsymbol{a}_i^{(n+1)} + \boldsymbol{B}_i^{-1} 
  \boldsymbol{K}_i \left(\delta \boldsymbol{d}_i^{(n)}
  + \Delta t \delta \boldsymbol{v}_i^{(n)} + \Delta t^2 
  (1/2 - \beta_i) \delta \boldsymbol{a}_i^{(n)}\right)
  = \boldsymbol{B}_i^{-1} \boldsymbol{C}_i^{\mathrm{T}} 
  \delta \boldsymbol{\lambda}^{(n+1)} - \Delta t^2 \boldsymbol{B}_i^{-1} 
  \boldsymbol{K}_i \boldsymbol{\varepsilon}_{d_i}
\end{align}
where the matrix $\boldsymbol{B}_i$ has been defined as follows:
\begin{align}
  \boldsymbol{B}_i := \boldsymbol{M}_i + 
  \beta_i \Delta t^2 \boldsymbol{K}_i
\end{align}
The operation $\boldsymbol{B}_i^{-1}$ in equation 
\eqref{Eqn:Perturbed_Intermediate} is justified 
as the matrix is positive definite and hence 
invertible. 
By multiplying both sides of equation 
\eqref{Eqn:Perturbed_Intermediate} by 
$\gamma_i \Delta t$ and using equations 
\eqref{Eqn:Original_Newmark_v_subdomain} 
and \eqref{Eqn:Perturbed_Newmark_v_subdomain}, 
one can arrive at the following equation: 
\begin{align}
  \delta \boldsymbol{v}_i^{(n+1)} - \delta \boldsymbol{v}_i^{(n)} 
  &- (1 - \gamma_i) \Delta t \delta \boldsymbol{a}_i^{(n)} 
  - \Delta t \boldsymbol{\varepsilon}_{v_i} + \gamma_i 
  \Delta t \boldsymbol{B}_i^{-1} \boldsymbol{K}_i \left(\delta 
  \boldsymbol{d}_i^{(n)}+ \Delta t \delta \boldsymbol{v}_i^{(n)} 
  + \Delta t^2 (1/2 - \beta_i) \delta \boldsymbol{a}_i^{(n)}
  \right) \nonumber \\
  &= \gamma_i \Delta t \boldsymbol{B}_i^{-1} 
  \boldsymbol{C}_i^{\mathrm{T}} \delta 
  \boldsymbol{\lambda}^{(n+1)} - \gamma_i 
  \Delta t^3 \boldsymbol{B}_i^{-1} 
  \boldsymbol{K}_i \boldsymbol{\varepsilon}_{d_i}
\end{align}
We shall assume that $\sum_{i=1}^{S} \boldsymbol{C}_i 
\delta \boldsymbol{v}_i^{(n)} =\boldsymbol{0}$. That 
is, the constraint is exactly satisfied at the $n$-th 
time level. 
Premultiplying both sides by $\boldsymbol{C}_i$, 
summing over $i$ (i.e., the number of subdomains), 
and using equations \eqref{Eqn:Original_Constraint} 
and \eqref{Eqn:Perturbed_Constraint}; one can arrive 
at the following equation:
\begin{align}
  \boldsymbol{\varepsilon}_{\lambda} - \Delta t \sum_{i=1}^{S} 
  (1 - \gamma_i) \boldsymbol{C}_i \delta \boldsymbol{a}_i^{(n)} 
  &- \Delta t \sum_{i=1}^{S} \boldsymbol{C}_i \boldsymbol{\varepsilon}_{v_i} 
  + \Delta t \sum_{i=1}^{S} \gamma_i \boldsymbol{C}_i \boldsymbol{B}_i^{-1} 
  \boldsymbol{K}_i \left(\delta \boldsymbol{d}_i^{(n)}
  + \Delta t \delta \boldsymbol{v}_i^{(n)} + \Delta t^2 
  (1/2 - \beta_i) \delta \boldsymbol{a}_i^{(n)}\right) \nonumber \\
  &= \Delta t \left(\sum_{i=1}^{S} \gamma_i \boldsymbol{C}_i 
  \boldsymbol{B}_i^{-1} \boldsymbol{C}_i^{\mathrm{T}} \right) 
  \delta \boldsymbol{\lambda}^{(n+1)} 
  - \Delta t^3 \sum_{i=1}^{S} \gamma_i \boldsymbol{C}_i 
  \boldsymbol{B}_i^{-1} \boldsymbol{K}_i 
  \boldsymbol{\varepsilon}_{d_i}
\end{align}
By taking norm on both sides and invoking triangle 
inequality, one can arrive at the following estimate 
for $\delta \boldsymbol{\lambda}^{(n+1)}$: 
\begin{align}
  \label{Eqn:Monolithic_lambda_estimate}
  \|\delta \boldsymbol{\lambda}^{(n+1)}\| \leq C_{\lambda} 
  \left(\frac{1}{\Delta t} \|\boldsymbol{\varepsilon}_{\lambda}\| 
  + \sum_{i=1}^{S} \left(
   \|\boldsymbol{\varepsilon}_{v_i}\|
  + \Delta t^2 \|\boldsymbol{\varepsilon}_{d_i}\|
  + \|\delta \boldsymbol{a}_{i}^{(n)}\|
  + \Delta t \|\delta \boldsymbol{v}_{i}^{(n)}\|
  + \|\delta \boldsymbol{d}_{i}^{(n)}\|
  \right) \right)
\end{align}
where $C_{\lambda}$ is a constant. 
Following a similar procedure for 
displacements, velocities, and accelerations 
we obtain the following: 
\begin{align}
&\| \delta \boldsymbol{d}_i^{\left( n + 1 \right)} \| \leq
C_{d} \left( \| \delta \boldsymbol{d}_i^{(n)}\| + 
\Delta t \| \delta \boldsymbol{v}_i^{(n)}\| + 
\Delta t^{2} \| \boldsymbol{\varepsilon}_{d_i}\| + 
\Delta t \| \boldsymbol{\varepsilon}_{\lambda}\| + 
\sum_{i = 1}^{S} \left( \Delta t^2 \| \delta \boldsymbol{a}_i^{(n)}\| +
\Delta t \| \boldsymbol{\varepsilon}_{v_i}\|\right)\right) \\
&\| \delta \boldsymbol{v}_i^{\left( n + 1 \right)} \| \leq
C_{v} \left( \| \delta \boldsymbol{v}_i^{(n)}\| + 
\| \boldsymbol{\varepsilon}_{\lambda}\| + 
\sum_{i = 1}^{S} \left( \Delta t \| \delta \boldsymbol{a}_i^{(n)}\| +
\Delta t \| \delta \boldsymbol{d}_i^{(n)}\| +
\Delta t^3 \| \boldsymbol{\varepsilon}_{d_i}\| + 
\Delta t \| \boldsymbol{\varepsilon}_{v_i}\|\right)\right) \\
&\| \delta \boldsymbol{a}_i^{\left( n + 1 \right)} \| \leq
C_{a} \left( \frac{1}{\Delta t} \| \boldsymbol{\varepsilon}_{\lambda}\| +
\Delta t \| \delta \boldsymbol{v}_i^{(n)} \| + 
\sum_{i = 1}^{S} \left( \| \delta \boldsymbol{a}_i^{(n)}\| +
 \| \delta \boldsymbol{d}_i^{(n)}\| + 
 \Delta t^2 \| \boldsymbol{\varepsilon}_{d_i} \| + 
 \| \boldsymbol{\varepsilon}_{v_i} \| \right)\right)
\end{align}
where $C_{d}$, $C_{v}$ and $C_{a}$ are constants.
From the above estimate \eqref{Eqn:Monolithic_lambda_estimate}, 
one can see that a perturbation in the constraint, 
$\boldsymbol{\varepsilon}_{\lambda}$, leads to an 
amplification by $1/\Delta t$ in the Lagrange 
multiplier. On the other hand, the perturbations 
in the variables $\boldsymbol{d}_i$ and $\boldsymbol{v}_i$ 
lead to (at most) linear growth in the Lagrange multiplier. 
Clearly, the estimate for the proposed coupling method 
under \emph{no} subcycling follows the typical behavior 
of differential-algebraic equations. An extension of 
this study to include subcycling will require a more 
involved and careful analysis, and is beyond the scope 
of this paper. 

\subsection{On drifts in interface displacement 
and acceleration vectors}
\label{Subsec:Monolithic_bounds_on_drifts}
In a time continuous setting, enforcing the continuity 
of either displacements, velocities or accelerations 
are all mathematically equivalent. However, in a 
numerical setting this equivalence will not hold, 
and the numerical performance will depend on the 
type of the constraint that is being enforced.
As mentioned in the previous sections, we employ the 
continuity of velocities at the subdomain interface 
at every system time-step (which we referred to as 
the $\boldsymbol{v}$-continuity). This may lead to 
drift in the displacements and the accelerations 
along the subdomain interface. We now derive bounds 
on these drifts, which could serve as a valuable 
check for the correctness of a numerical implementation.

For the present study, we shall assume that there 
is no subcycling (i.e., $\eta_i = 1$), and no mixed 
methods are employed (i.e., $\beta_i = \beta$, 
$\gamma_i = \gamma$). The errors due to finite 
precision arithmetic and their numerical propagation 
are ignored. 
For convenience, let us denote the drift in the 
displacements and the drift in the accelerations 
along the subdomain interface as follows:
\begin{subequations}
  \begin{align}
  \boldsymbol{a}_{\mathrm{drift}}^{(n)} := \sum_{i=1}^{S} 
    \boldsymbol{C}_i \boldsymbol{a}_i^{(n)} \\
    \boldsymbol{d}_{\mathrm{drift}}^{(n)} := \sum_{i=1}^{S} 
    \boldsymbol{C}_i \boldsymbol{d}_i^{(n)} 
  \end{align} 
\end{subequations}
Basically, the drift in displacements (or 
accelerations) is the measure of error in 
meeting the continuity of displacements 
(or accelerations) across the subdomain 
interface. 
The drifts satisfy the following relations:
\begin{subequations}
  \begin{align}
    \label{Eqn:acc_drift}
    \boldsymbol{a}_{\mathrm{drift}}^{( n + 1)} &= 
    \left( 1 - \frac{1}{\gamma} \right) 
    \boldsymbol{a}_{\mathrm{drift}}^{(n)} \\
    \label{Eqn:disp_drift}
    \boldsymbol{d}_{\mathrm{drift}}^{(n + 1)} &= 
    \boldsymbol{d}_{\mathrm{drift}}^{(n)} +  
    \left( \frac{1}{2} - \frac{\beta}{\gamma}\right) \Delta t^{2}
    \boldsymbol{a}_{\mathrm{drift}}^{(n)} 
  \end{align}
\end{subequations}
Thus, one can draw the following conclusions 
about the drifts:
\begin{enumerate}[(i)]
\item For numerical stability of a time-stepping 
  scheme under Newmark family, $\gamma \geq 1/2$. 
  Therefore, 
  \begin{align}
    \|\boldsymbol{a}_{\mathrm{drift}}^{(n+1)}\| \leq 
    \|\boldsymbol{a}_{\mathrm{drift}}^{(n)}\|
  \end{align}
  One has the equality only when $\gamma = 1/2$ 
  (e.g., Newmark average acceleration scheme, 
  central difference scheme, Newmark linear 
  acceleration scheme).
\item For any time stepping scheme with 
  $\gamma = 2 \beta$ (e.g., Newmark average 
  acceleration scheme) we have 
  \begin{align}
    \boldsymbol{d}_{\mathrm{drift}}^{(n+1)} = 
    \boldsymbol{d}_{\mathrm{drift}}^{(n)} \quad 
    n = 1, 2, \cdots
  \end{align}
\end{enumerate}
The above claims will be numerically substantiated 
in a subsequent section using the test problem 
outlined in subsection \ref{Subsec:Monolithic_square_plate}.

%
\section{SPLIT DEGREE-OF-FREEDOM LUMPED PARAMETER SYSTEM}
\label{Sec:S6_Monolithic_SDOF}
Consider a split agree of freedom whose 
motion can be described by the following 
system of ordinary differential/algebraic equation:
\begin{subequations}
  \begin{align}
    \label{Eqn:SDOF-2ndOrder-2Dom-1}
    &m_{A} \ddot{u}_A(t) + k_{A} u_A(t) = f_{A}(t) + \lambda(t) \\
    \label{Eqn:SDOF-2ndOrder-2Dom-2}	
    &m_{B} \ddot{u}_B(t) + k_{B} u_B(t) = f_{B}(t) - \lambda(t) \\
    &\dot{u}_{A}(t) - \dot{u}_B(t) = 0
  \end{align}
\end{subequations}
The following parameters are used: $m_{A} = 0.1$, 
$m_{B} = 0.005$, and the stiffness of springs are 
$k_{A} = 2.5$ and $k_{B} = 50$. The subdomain 
time-steps are taken as $\Delta t_{A} = 0.02$ 
and $\Delta t_{B} = 0.005$. The system time-step 
is taken as $\Delta t = 0.02$. The values of the 
external forces are taken to be zero, that is 
$f_{A} = 0$ and $f_{B} = 0$. The initial conditions 
are $d_{0} = 0.1$ and $v_{0} = 1.0$. The problem 
is solved over a time interval of $[0, 0.5]$. In 
all the cases, Newmark average acceleration scheme 
is used in all the subdomains. 
The resulting numerical results for kinematic variables 
are shown in Figure \ref{Fig:SDOF_kinematic}. Since the 
external forces applied are constant ($f_A = f_B = 0$) 
the PH method and the proposed coupling methods yield 
the same results. The GC method suffers from excessive 
damping and fails to match the exact results. Similar 
observation can be made about the interface force as 
well as total physical energy of the system, as shown 
in Figure \ref{Fig:SDOF_lambda_energy}. 

%
\section{ON ENERGY CONSERVING VS. ENERGY PRESERVING COUPLING}
\label{Sec:S7_Monolithic_Energy}
In this section we address the energy preserving and energy 
conserving properties of the proposed multi-time-step coupling 
method. Two different notions of energy preserving will be 
considered. In particular, the following questions will be 
answered: 
\begin{enumerate}[(a)]
\item Does the coupling method add or extract 
  energy from the system of subdomains in comparison 
  with the case of no coupling?
\item Do the interface forces perform net work?
\item Under what conditions does the coupling method 
  conserve the total energy of the system of subdomains? 
\end{enumerate}
To this end, the kinetic energy and the potential 
energy of the $i$-th subdomain are, respectively, 
defined as follows:
\begin{align}
  \mathcal{T}_i\left(\boldsymbol{v}_i\right) := \frac{1}{2} 
  \boldsymbol{v}_i^{\mathrm{T}} \boldsymbol{M}_i 
  \boldsymbol{v}_i \quad
  \mathcal{V}_i\left(\boldsymbol{d}_i\right) := \frac{1}{2} 
  \boldsymbol{d}_i^{\mathrm{T}} \boldsymbol{K}_i 
  \boldsymbol{d}_i
\end{align}
The total energy of the $i$-th subdomain is given by 
\begin{align}
  \mathcal{E}_i\left(\boldsymbol{d}_i,\boldsymbol{v}_i\right) 
  := \mathcal{T}_i\left(\boldsymbol{v}_i\right) + \mathcal{V}_i 
  \left(\boldsymbol{d}_i\right)
\end{align}
The total energy of all the subdomains at the $n$-th 
(system) time level can be written as follows: 
\begin{align}
  \mathcal{E}^{(n)} \equiv \mathcal{E}\left(\boldsymbol{d}_1^{(n)}, 
  \cdots, \boldsymbol{d}_S^{(n)}, \boldsymbol{v}_1^{(n)}, \cdots, 
  \boldsymbol{v}_S^{(n)}\right) := \sum_{i=1}^{S} \mathcal{E}_i
  \left(\boldsymbol{d}_i^{(n)}, \boldsymbol{v}_i^{(n)}\right)
\end{align}
In the remainder of this section, we shall 
assume that the external forces are zero 
(i.e., $\boldsymbol{f}_i(t) = \boldsymbol{0} 
\; \forall i$). 
For the proposed multi-time-step method, 
one can derive the following relationship:
\begin{align}
  \label{Eqn:Monolithic_jump_in_E}
  \mathcal{E}^{(n+1)} - \mathcal{E}^{(n)} = 
  \mathcal{E}^{(n \rightarrow n+1)}_{\mathrm{algorithm}} 
  + \mathcal{E}^{(n \rightarrow n+1)}_{\mathrm{interface}}
\end{align}
where $\mathcal{E}_{\mathrm{algorithm}}^{(n \rightarrow n+1)}$ 
and $\mathcal{E}_{\mathrm{interface}}^{(n \rightarrow n + 1)}$ 
are, respectively, defined as follows:
\begin{align}
  \label{Eqn:Monolithic_E_algorithm}
  \mathcal{E}^{(n \rightarrow n+1)}_{\mathrm{algorithm}} &:= 
  - 2 \sum_{i=1}^S \sum_{j=0}^{\eta_i - 1}\left(\gamma_i - \frac{1}{2}\right) 
  \mathcal{V}_i\left(\left[\boldsymbol{d}_i^{\left( n + \frac{j}{\eta_i} \right)}\right]_{i}\right) 
  -\Delta t^2 \sum_{i=1}^{S} 
  \frac{1}{\eta_i^2} \left(\beta_i - \frac{\gamma_i}{2}\right) 
  \left\llbracket \mathcal{T}_i\left(\boldsymbol{a}_i^{(n)}\right)
  \right \rrbracket \nonumber \\
  &-\Delta t^2 \sum_{i=1}^{S} \frac{1}{\eta_i^2} 
  \left(\beta_i - \frac{\gamma_i}{2}\right) 
  \left(2 \gamma_i - 1\right) \left(\sum_{j=0}^{\eta_i -1} 
  \mathcal{T}_i\left(
  \left[\boldsymbol{a}_i^{\left( n + \frac{j}{\eta_i} \right)}\right]_{i}\right) \right)\\
  \label{Eqn:Monolithic_E_interface}
  \mathcal{E}_{\mathrm{interface}}^{(n \rightarrow n+1)} &:= 
  \sum_{i=1}^{S} \sum_{j=0}^{\eta_i - 1}
  \left(\left(1 - \gamma_i\right) \boldsymbol{\lambda}^{\left( n+ \frac{j}{\eta_i} \right)} 
  + \gamma_i \boldsymbol{\lambda}^{\left( n + \frac{j+1}{\eta_i}\right)}\right)^{\mathrm{T}}
  \boldsymbol{C}_i \left[\boldsymbol{d}_i^{\left( n + \frac{j}{\eta_i} \right)}\right]_{i}
\end{align}
If there is no subcycling in all the subdomains 
(i.e., $\eta_i = 1 \; \forall i$), the above 
relationship can be simplified as follows: 
\begin{align}
\label{Eqn:Monolithic_jump_in_E}
  \mathcal{E}^{(n+1)} - \mathcal{E}^{(n)} &= 
  \underbrace{
  - 2 \sum_{i=1}^S \left(\gamma_i - \frac{1}{2}\right) 
  \mathcal{V}_i\left(\left\llbracket\boldsymbol{d}_i^{(n)}
  \right\rrbracket\right)
  -\Delta t^2 \sum_{i=1}^{S} \gamma_i \left(2 \beta_i - 
  \gamma_i\right) \mathcal{T}_i\left(\left\llbracket\boldsymbol{a}_i^{(n)}
    \right\rrbracket\right)}_{\mathcal{E}_{\mathrm{algorithmic}}^{(n \rightarrow n+1)}} 
  \nonumber \\
  &+\underbrace{\sum_{i=1}^{S} 
    \left((1 - \gamma_i) \boldsymbol{\lambda}^{(n)} + 
    \gamma_i \boldsymbol{\lambda}^{(n+1)}\right)^{\mathrm{T}} 
    \boldsymbol{C}_i \left\llbracket\boldsymbol{d}_i^{(n)}
    \right\rrbracket}_{\mathcal{E}_{\mathrm{interface}}^{(n \rightarrow n +1)}}
\end{align}

\subsection{Energy preserving in the first sense}
We shall call that the coupling method \emph{preserves 
  energy in the first sense} if the coupling neither 
adds nor extracts energy over a system time-step in 
comparison to that of no coupling. By no coupling, 
we mean that the problem \eqref{Eqn:vContinuity_GE} 
is solved without decomposing into subdomains (i.e., 
$S = 1$), no subcycling (i.e., $\eta_i = 1$), and no 
mixed methods (i.e., $\gamma_i = \gamma$ and $\beta_i 
= \beta \; \forall i$). We denote the total energy at 
integral time levels under no coupling as follows:
\begin{align}
  \mathcal{E}^{(n)}_{\mbox{no coupling}} := 
  \mathcal{T}^{(n)}_{\mbox{no coupling}} 
  + \mathcal{V}^{(n)}_{\mbox{no coupling}}
\end{align}
where 
\begin{align}
  &\mathcal{T}^{(n)}_{\mbox{no coupling}} := \frac{1}{2} 
  {\boldsymbol{v}^{(n)}}^{\mathrm{T}} \boldsymbol{M} 
  \boldsymbol{v}^{(n)} \\
  &\mathcal{V}^{(n)}_{\mbox{no coupling}} := \frac{1}{2} 
             {\boldsymbol{d}^{(n)}}^{\mathrm{T}} 
             \boldsymbol{K} \boldsymbol{d}^{(n)}
\end{align}
Mathematically, preserving energy in the 
first sense implies that 
\begin{align}
  \mathcal{E}^{(n)} = \mathcal{E}^{(n)}_{\mbox{no coupling}} 
  \quad \forall n
\end{align}
The numerical solution presented in Figure 
\ref{Fig:Monolithic_energy_preservation} 
confirms that the proposed multi-time-step 
coupling method, in general, does not 
preserve energy in the first sense. 

\begin{remark}
  It should be noted that many stable time stepping 
  schemes under the Newmark family are dissipative 
  \cite{Hughes}. That is, 
  \begin{align}
    \mathcal{E}^{(n+1)}_{\mbox{no coupling}} 
    < \mathcal{E}^{(n)}_{\mbox{no coupling}} 
    \quad \forall n
  \end{align}
  Only the Newmark average acceleration scheme 
  $(\gamma = 1/2, \; \beta = 1/4)$ under the 
  Newmark family conserves energy for linear 
  problems (e.g. linear elastodynamics). That is, 
  \begin{align}
    \mathcal{E}^{(n+1)}_{\mbox{no coupling}} = 
    \mathcal{E}^{(n)}_{\mbox{no coupling}} 
    \quad \forall n
  \end{align}
\end{remark}

\subsection{Energy preserving in the second sense}
We shall call that the coupling method \emph{preserves 
  energy in the second sense} if the interface forces 
(i.e., the multipliers $\boldsymbol{\lambda}$) do not 
perform net work over a system time-step. That is, 
\begin{align}
  \mathcal{E}^{(n\rightarrow n+1)}_{\mathrm{interface}} = 0 
  \quad \forall n
\end{align}
In general, the proposed multi-time-step 
coupling method does not preserve energy 
even in the second sense. However, using 
equation \eqref{Eqn:Monolithic_E_interface}, 
one can show that a sufficient condition for 
the coupling method to preserve energy in the 
second sense is to have $\gamma_i = \gamma \; 
\forall i$, $\gamma_i = 2 \beta_i$, and no 
subcycling (i.e., $\eta_i = 1 \; \forall i$). 
This sufficient condition also guides one to 
construct a simple example that substantiates 
the claim that the proposed coupling method need not preserve 
the energy in the second sense. By choosing Newmark 
average acceleration scheme $(\gamma = 1/2, \; 
\beta = 1/4)$ in all subdomains we will have 
$\mathcal{E}^{(n\rightarrow n+1)}_{\mathrm{algorithm}} = 0 \; 
\forall n$. This implies that the difference between 
$\mathcal{E}^{(n+1)}$ and $\mathcal{E}^{(n)}$ is solely 
due to $\mathcal{E}^{(n\rightarrow n+1)}_{\mathrm{interface}}$. 
If there is subcycling then one could have 
\begin{align}
  \mathcal{E}^{(n \rightarrow n+1)}_{\mathrm{interface}} 
  \neq 0
\end{align}
Based on the above reasoning, Figure 
\ref{Fig:Monolithic_multi_time_step_energy_plots} 
presents the numerical results to substantiates 
the above claim. 

\subsection{Energy conserving}
We shall say that the coupling method 
\emph{conserves energy} exactly if 
\begin{align}
  \mathcal{E}^{(n+1)} = \mathcal{E}^{(n)} 
  \quad \forall n
\end{align}
Based on equation \eqref{Eqn:Monolithic_jump_in_E}, a 
necessary and sufficient condition for the coupling 
method for conserve energy is 
\begin{align}
  \mathcal{E}^{(n \rightarrow n+1)}_{\mathrm{algorithm}} + 
  \mathcal{E}^{(n \rightarrow n+1)}_{\mathrm{interface}} 
  = 0 \quad \forall n
\end{align}
where $\mathcal{E}^{(n \rightarrow n+1)}_{\mathrm{algorithm}}$ 
and $\mathcal{E}^{(n \rightarrow n+1)}_{\mathrm{interface}}$ are, 
respectively, defined in equations \eqref{Eqn:Monolithic_E_algorithm} 
and \eqref{Eqn:Monolithic_E_interface}. 
A sufficient condition can be written as follows: 
\begin{align}
  \mathcal{E}^{(n \rightarrow n+1)}_{\mathrm{algorithm}} 
  = 0 \quad \mathrm{and} \quad 
  \mathcal{E}^{(n \rightarrow n+1)}_{\mathrm{interface}} 
  = 0  \quad \forall n
\end{align}
The following theorem provides a way to achieve 
the above sufficient condition. 

\begin{theorem}
  If all the subdomains employ the Newmark average acceleration 
  scheme (i.e., $\gamma_i = 1/2$ and $\beta_i = 1/4$), and there 
  is no subcycling (i.e., $\eta_i = 1 \; \forall i$), then the 
  coupling method exactly conserves energy when $\boldsymbol{f}_i
  (t) = \boldsymbol{0} \; \forall i$.
\end{theorem}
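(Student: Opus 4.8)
The plan is to invoke the energy-jump decomposition established in equation \eqref{Eqn:Monolithic_jump_in_E} (the no-subcycling form), namely $\mathcal{E}^{(n+1)} - \mathcal{E}^{(n)} = \mathcal{E}^{(n \rightarrow n+1)}_{\mathrm{algorithm}} + \mathcal{E}^{(n \rightarrow n+1)}_{\mathrm{interface}}$, and to show that under the stated hypotheses each of the two contributions vanishes identically for every $n$. This is exactly the sufficient condition for energy conservation recorded just above the theorem, so the conclusion follows.

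First I would dispose of the algorithmic term, which requires only direct substitution into equation \eqref{Eqn:Monolithic_E_algorithm} specialized to $\eta_i = 1$. Putting $\gamma_i = 1/2$ makes the coefficient $\gamma_i - 1/2$ vanish, so the potential-energy part $-2\sum_i (\gamma_i - 1/2)\,\mathcal{V}_i(\llbracket \boldsymbol{d}_i^{(n)}\rrbracket)$ is zero; putting $\beta_i = 1/4$ makes $2\beta_i - \gamma_i = 0$, which kills the kinetic-energy part $-\Delta t^2 \sum_i \gamma_i(2\beta_i - \gamma_i)\,\mathcal{T}_i(\llbracket \boldsymbol{a}_i^{(n)}\rrbracket)$. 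Hence $\mathcal{E}^{(n \rightarrow n+1)}_{\mathrm{algorithm}} = 0$ for all $n$, and this step carries no difficulty.

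The substance is the interface term. With no subcycling and $\gamma_i = 1/2$ for every $i$, equation \eqref{Eqn:Monolithic_E_interface} collapses, using linearity of the jump operator and the definition of $\boldsymbol{d}_{\mathrm{drift}}^{(n)}$, to
\begin{align*}
  \mathcal{E}^{(n \rightarrow n+1)}_{\mathrm{interface}}
  &= \sum_{i=1}^{S} \left( \tfrac12 \boldsymbol{\lambda}^{(n)} + \tfrac12 \boldsymbol{\lambda}^{(n+1)} \right)^{\mathrm{T}} \boldsymbol{C}_i \left\llbracket \boldsymbol{d}_i^{(n)} \right\rrbracket
  = \left\llangle \boldsymbol{\lambda}^{(n)} \right\rrangle^{\mathrm{T}} \left\llbracket \boldsymbol{d}_{\mathrm{drift}}^{(n)} \right\rrbracket .
\end{align*}
It remains to show $\llbracket \boldsymbol{d}_{\mathrm{drift}}^{(n)}\rrbracket = \boldsymbol{0}$. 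This is precisely equation \eqref{Eqn:disp_drift}: since the Newmark average acceleration scheme satisfies $\gamma = 2\beta$, the coefficient $1/2 - \beta/\gamma$ vanishes, so $\boldsymbol{d}_{\mathrm{drift}}^{(n+1)} = \boldsymbol{d}_{\mathrm{drift}}^{(n)}$, i.e. the interface displacement drift is frozen at its value from the previous step. Therefore $\mathcal{E}^{(n \rightarrow n+1)}_{\mathrm{interface}} = 0$ for all $n$.

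I would close by noting that the derivation of \eqref{Eqn:disp_drift} in Section~\ref{Subsec:Monolithic_bounds_on_drifts} was carried out under exactly the assumptions used here ($\eta_i = 1$, $\gamma_i = \gamma$, $\beta_i = \beta$), so it applies verbatim. Combining the two pieces, $\mathcal{E}^{(n+1)} - \mathcal{E}^{(n)} = 0$ for all $n$, which is the claimed exact energy conservation. The only place meriting care is the interface term, and even there the essential work has already been done in establishing the drift recursion; the residual subtlety is simply to observe that one does not need the displacement constraint to be exactly satisfied, only that its violation is unchanged from step to step — which is exactly what $\gamma = 2\beta$ provides, so the weighted multiplier tested against a constant drift telescopes to zero.
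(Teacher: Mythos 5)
Your proof is correct, but it takes a different route from the paper's. The paper gives a self-contained direct computation: it averages the subdomain equation of motion over the step, premultiplies by $\left\llangle\boldsymbol{v}_i^{(n)}\right\rrangle$, invokes the average-acceleration identities $\left\llbracket\boldsymbol{d}_i^{(n)}\right\rrbracket = \Delta t \left\llangle\boldsymbol{v}_i^{(n)}\right\rrangle$ and $\left\llbracket\boldsymbol{v}_i^{(n)}\right\rrbracket = \Delta t \left\llangle\boldsymbol{a}_i^{(n)}\right\rrangle$, sums over subdomains, and kills the interface work using $\sum_i \boldsymbol{C}_i \left\llangle\boldsymbol{v}_i^{(n)}\right\rrangle = \boldsymbol{0}$, which follows from enforcing velocity continuity at both system time levels. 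You instead treat the theorem as a corollary of the previously stated decomposition $\mathcal{E}^{(n+1)}-\mathcal{E}^{(n)} = \mathcal{E}^{(n\rightarrow n+1)}_{\mathrm{algorithm}} + \mathcal{E}^{(n\rightarrow n+1)}_{\mathrm{interface}}$, annihilating the algorithmic term by inspection of the coefficients $\gamma_i - 1/2$ and $2\beta_i - \gamma_i$, and the interface term via the drift recursion $\boldsymbol{d}_{\mathrm{drift}}^{(n+1)} = \boldsymbol{d}_{\mathrm{drift}}^{(n)}$ when $\gamma = 2\beta$. The two arguments are ultimately equivalent — for the average acceleration scheme, $\left\llbracket\boldsymbol{d}_{\mathrm{drift}}^{(n)}\right\rrbracket = \Delta t \sum_i \boldsymbol{C}_i \left\llangle\boldsymbol{v}_i^{(n)}\right\rrangle$, so your drift argument is the paper's velocity-continuity argument in disguise — but yours buys two things: it makes visible that conservation only requires the interface displacement mismatch to be \emph{constant} in time rather than zero, and it connects the theorem directly to the sufficient condition $\mathcal{E}_{\mathrm{algorithm}} = \mathcal{E}_{\mathrm{interface}} = 0$ stated just before it. The cost is that your proof is only as rigorous as equations \eqref{Eqn:Monolithic_E_algorithm}, \eqref{Eqn:Monolithic_E_interface} and \eqref{Eqn:disp_drift}, which the paper asserts without full derivation; the paper's version avoids that dependence by recomputing the energy balance from scratch.
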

\begin{proof}
  This proof is a simple extension of the proof for single domain 
  (i.e., without coupling). For Newmark average acceleration time 
  stepping scheme the following identities hold:
  \begin{subequations}
    \begin{align}
      \label{Eqn:Monolithic_jump_d}
      \left\llbracket\boldsymbol{d}_i^{(n)}\right\rrbracket = 
      \Delta t \left\llangle\boldsymbol{v}_i^{(n)}\right\rrangle \\
      \label{Eqn:Monolithic_jump_v}
      \left\llbracket\boldsymbol{v}_i^{(n)}\right\rrbracket = 
      \Delta t \left\llangle\boldsymbol{a}_i^{(n)}\right\rrangle
    \end{align} 
  \end{subequations}
  The governing equation for $i$-th subdomain implies that 
  \begin{align}
    \boldsymbol{M}_i \left\llangle\boldsymbol{a}_i^{(n)} \right\rrangle 
    + \boldsymbol{K}_i \left\llangle\boldsymbol{d}_i^{(n)}\right\rrangle 
    = \boldsymbol{C}_i^{\mathrm{T}} \left\llangle\boldsymbol{\lambda}^{(n)}
    \right\rrangle
  \end{align}
  Premultiplying by $\left\llangle\boldsymbol{v}_i^{(n)}\right\rrangle$, 
  using the above relations \eqref{Eqn:Monolithic_jump_d}--\eqref{Eqn:Monolithic_jump_v}, 
  summing over all the subdomains and using the continuity of velocities, we get the following:
  \begin{align}
    \sum_{i=1}^{S} \left\llangle\boldsymbol{v}_i^{(n)}\right\rrangle^{\mathrm{T}}
    \boldsymbol{M}_i \left\llbracket\boldsymbol{v}_i^{(n)}\right\rrbracket 
    + \sum_{i=1}^{S}
    {\left\llbracket\boldsymbol{d}_i^{(n)}\right\rrbracket}^{\mathrm{T}} 
    \boldsymbol{K}_i \left\llangle\boldsymbol{d}_i^{(n)}\right\rrangle  
    &= \sum_{i=1}^S
               {\left\llangle\boldsymbol{v}_i^{(n)}\right\rrangle}^{\mathrm{T}}  
               \boldsymbol{C}_i^{\mathrm{T}} \left\llangle\boldsymbol{\lambda}^{(n)} \right\rrangle 
               \nonumber \\
               &= {\left\llangle\boldsymbol{\lambda}^{(n)}\right\rrangle}^{\mathrm{T}} \sum_{i=1}^{S} 
               \boldsymbol{C}_i \left\llangle\boldsymbol{v}_i^{(n)} \right\rrangle = 0
  \end{align}
  Using the symmetry of the matrices $\boldsymbol{M}_i$ 
  and $\boldsymbol{K}_i$, and noting the linearity of 
  the jump operator, we have the following 
  \begin{align}
    \left\llbracket \frac{1}{2} 
    \sum_{i=1}^{S} {\boldsymbol{v}_i^{(n)}}^{\mathrm{T}} 
    \boldsymbol{M}_i \boldsymbol{v}_i^{(n)} + \frac{1}{2} 
    \sum_{i=1}^{S}{\boldsymbol{d}_i^{(n)}}^{\mathrm{T}} 
    \boldsymbol{K}_i \boldsymbol{d}_i^{(n)} \right\rrbracket = 0
  \end{align}
  which shows that the total energy is exactly 
  conserved over a system time-step.
\end{proof}
It is noteworthy that if $\gamma_i = \gamma > 
1/2$ and $\beta_i = \gamma_i / 2$, and there 
is no subcycling then we have 
\begin{align}
  \mathcal{E}^{(n+1)} - \mathcal{E}^{(n)} 
  = -2 \left(\gamma - \frac{1}{2}\right) 
   \sum_{i=1}^{S} \mathcal{V}_{i} 
   \left(\left\llbracket\boldsymbol{d}^{(n)}_i 
   \right\rrbracket\right) < 0
\end{align}
which implies that the coupling method will 
be strictly energy decaying. As mentioned 
in Section \ref{Sec:S2_Monolithic_Newmark}, 
$\gamma < 1/2$ is not in the allowable 
range of values under the Newmark family 
of time integrators because of numerical 
stability.

\subsection{Is the PH method really energy preserving?}
We are now set nicely to examine the claim made in 
Reference \cite{Prakash_Hjelmstad_IJNME_2004_v61_p2183} 
that the PH method preserves energy. 
In the absence of external forces, the proposed 
coupling method is the same as the PH method. 
Therefore, based on the earlier discussion in 
this section, the PH method is neither energy 
conserving nor energy preserving in both first 
and second senses. 
The source of error that led to the false claim 
is due to the use of an inappropriate definition 
for the work done by the interface. 
Using the notation introduced in this paper, the 
expression considered in \cite[equations (58) 
and (61)]{Prakash_Hjelmstad_IJNME_2004_v61_p2183} for work done 
by the interface can be written as follows:
 \begin{align}
    \frac{1}{\Delta t_A} \left\llbracket \boldsymbol{v}_A^{(n)}
    \right\rrbracket^{\mathrm{T}} 
    \boldsymbol{C}_{A}^{\mathrm{T}} \left\llbracket
    \boldsymbol{\lambda}^{(n)}\right\rrbracket 
    + \frac{1}{\Delta t_B} \sum_{j=1}^{\eta_B} 
    \left[\boldsymbol{v}_B^{\left( n+ \frac{j-1}{\eta_B} \right)}\right]_{B}^{\mathrm{T}} 
    \boldsymbol{C}_{B}^{\mathrm{T}} \left[\boldsymbol{\lambda}^{\left( n +\frac{j-1}{\eta_B} \right)} \right]_{B}
\end{align}
But the above expression is \emph{not} 
appropriate for the work done by the 
interface forces. 
A comment is also warranted on the numerical 
results presented in \cite[Figures 8 \& 11]{Prakash_Hjelmstad_IJNME_2004_v61_p2183}, 
which have been used to support their claim. 
For the chosen test problems, these figures 
report that $E_{\mathrm{total}}$ is constant 
under the PH method where 
\begin{align}
E_{\mathrm{total}} := \frac{1}{2} {\boldsymbol{a}_{A}^{(n)}}^{\mathrm{T}} 
\boldsymbol{A}_A \boldsymbol{a}_{A}^{(n)} + 
\frac{1}{2} {\boldsymbol{v}_{A}^{(n)}}^{\mathrm{T}}
\boldsymbol{K}_A \boldsymbol{v}_{A}^{(n)} + 
\frac{1}{2} {\boldsymbol{a}_{B}^{(n)}}^{\mathrm{T}} 
\boldsymbol{A}_B \boldsymbol{a}_{B}^{(n)} + 
\frac{1}{2} {\boldsymbol{v}_{B}^{(n)}}^{\mathrm{T}}
\boldsymbol{K}_B \boldsymbol{v}_{B}^{(n)}
\end{align}
Recall that 
\begin{align}
\mathbf{A}_i := \boldsymbol{M}_i + \Delta 
t_{i}^2 \left(\beta_i - \frac{\gamma_i}{2}
\right) \boldsymbol{K}_i \quad i = A, B
\end{align}
The constant value for $E_{\mathrm{total}}$ 
has then been used to support that the PH 
method is energy preserving. The following 
remarks on the nature of $E_{\mathrm{total}}$ 
will put the things in perspective: 
\begin{enumerate}[(i)]
\item $E_{\mathrm{total}}$ is \emph{not} equal 
to the physical total energy of the system (i.e., 
the sum of kinetic and potential energies). Hence, 
the preservation of $E_{\mathrm{total}}$ does not 
imply the preservation of the physical total 
energy of the system.
\item Even this quantity will not be constant 
under the PH method if the Newmark parameter 
$\gamma \neq 1/2$ even in one subdomain. The 
result shown in reference \cite[Figures 8 \& 11]
{Prakash_Hjelmstad_IJNME_2004_v61_p2183} used 
$\gamma = 1/2$ in all the subdomains.
\item It is also noteworthy that $E_{\mathrm{total}}$ 
is constant even for non-zero constant external force, 
which will not be the case with the physical total 
energy.
\item If preservation of such a quantity is essential 
for some reason, it should be noted that the proposed 
coupling method will also preserve $E_{\mathrm{total}}$ 
under the same assumptions on the Newmark parameter 
and the external force.
\end{enumerate}

\subsection{On the effect of system time-step 
  and subcycling on accuracy}
In absence of external forces, the exact solution satisfies 
$\mathcal{E}^{(n+1)} - \mathcal{E}^{(n)} = 0$. Therefore, the 
quantities $\mathcal{E}_{\mathrm{algorithm}}^{(n\rightarrow n+1)}$ and 
$\mathcal{E}_{\mathrm{interface}}^{(n\rightarrow n+1)}$ can serve as 
error / accuracy indicators of a multi-time-stepping scheme. 
Note that these quantities arise, respectively, due to 
time-stepping scheme, and due to decomposing domain into 
subdomains. Of course, both these quantities are affected 
by subcycling. 

From equation \eqref{Eqn:Monolithic_E_algorithm}, 
it is easy to check that
 $\mathcal{E}_{\mathrm{algorithm}}^{(n\rightarrow n+1)}$ 
is proportional to $\Delta t^2$ and inversely 
proportional to $\eta_i^2$. Therefore, algorithmic 
error in the subdomains can always be decreased  
by employing either of these two strategies: 
\begin{itemize}
\item decreasing the system time-step by keeping 
  the subcycling ratios fixed (i.e., keeping 
  $\eta_i$ fixed) 
\item decreasing the subdomain time-step (i.e., 
  increase the values of $\eta_i$) by keeping 
  the system time-step fixed
\end{itemize}
Equation \eqref{Eqn:Monolithic_E_interface} can 
be written as follows: 
\begin{align}
  \mathcal{E}_{\mathrm{interface}}^{(n \rightarrow n+1)} = 
  \Delta t \sum_{i=1}^{S} \left\{\frac{1}{\eta_i} 
  \sum_{j=0}^{\eta_i - 1}
  \left(\left(1 - \gamma_i\right) 
  \boldsymbol{\lambda}^{\left( n+ \frac{j}{\eta_i} \right)} 
  + \gamma_i \boldsymbol{\lambda}^{\left( n + \frac{j+1}{\eta_i}\right)}
  \right)^{\mathrm{T}}\boldsymbol{C}_i 
  \boldsymbol{v}_i^{\left( n + \frac{j}{\eta_i} \right)}\right\}
  + O\left(\frac{\Delta t^2}{\eta_i^2}\right)   
\end{align}
$\mathcal{E}_{\mathrm{interface}}^{(n\rightarrow n+1)}$ is 
linearly proportional to $\Delta t$, which indicates 
that the error due to domain decomposition can \emph{always} 
be decreased with lowering the system time-step. However, 
for a fixed system time-step, the quantity in the 
parenthesis can be of $O(1)$ in magnitude. Therefore, 
choosing smaller subdomain time-steps while keeping the 
system time-step fixed need not improve the accuracy. 
This quantity may even grow with increase in the subcycling ratios. 
Hence, an appropriate quantity that can indicate the 
improvement or worsening of accuracy by subcycling 
is $\mathcal{E}_{\mathrm{interface}}^{(n \rightarrow n+1)}$, 
which can be calculated on the fly during a numerical 
simulation. Larger values of 
$\mathcal{E}_{\mathrm{interface}}^{(n \rightarrow n+1)}$ 
in magnitude implies that subcycling is adversely 
affecting the accuracy. 

Summarizing, the accuracy of the numerical 
results under the proposed multi-time-step 
method can \emph{always} be improved by 
decreasing the system time-step. 
The overall accuracy need not always improve 
with subcycling for a fixed system time-step. 
These theoretical observations are numerically 
verified in Section \ref{Sec:S9_Monolithic_NR}.

%
\section{ON THE PERFORMANCE OF BACKWARD DIFFERENCE FORMULAE 
  AND IMPLICIT RUNGE-KUTTA SCHEMES}
\label{Sec:S8_Monolithic_BDF_IRK}
In the numerical analysis literature, backward 
difference formulae (BDF) and implicit Runge-Kutta 
(IRK) schemes have been the schemes of choice for 
solving DAEs \cite{Hairer_Wanner,Ascher_Petzold}. 
The following quote by Petzold has been a popular 
catch-phrase for promoting BDF schemes: ``\emph{BDF 
is so beautiful that it is hard to imagine something 
else could be better}'' \cite[p.~481]{Hairer_Wanner}. 
This statement may be true for first-order DAEs that 
arise from modeling of physical systems involving 
dissipation. But these two families of schemes may 
\emph{not} be the best choices for second-order 
DAEs that posses important physical invariants 
(e.g., conservation of energy). 
In the context of second-order DAEs, Newmark family of 
time stepping schemes is also ``beautiful'' and can be 
``better.'' The Newmark family of time stepping schemes 
(which have been popular in Civil Engineering for solving 
ODEs arising in structural dynamics and earthquake 
engineering) did not get as much attention as they 
deserve to solve DAEs in both numerical analysis and 
engineering communities. The algebraic constraints 
in a DAE introduce high frequency modes, and fully 
implicit schemes such as Newmark family of time 
stepping schemes are particularly suited to avoid 
instabilities due to high frequency modes without 
introducing excessive damping. 

We now show that there will be excessive numerical 
damping if the proposed coupling method is based
on BDF or IRK schemes instead of the Newmark 
family of time stepping schemes. It may be argued 
that numerical damping is good for numerical stability, 
but excessive damping fails to preserve the important 
invariants (e.g., conservation of energy). Newmark 
family of time stepping schemes provide much better 
results under the same system time-step, especially, 
in the prediction of important physical invariants. 

The simplest scheme under both BDF and IRK families 
is the backward Euler scheme (which is also referred 
to as the implicit Euler scheme). We rewrite the 
governing equations as the following first-order 
DAEs:
\begin{subequations}
  \label{S8:1st_order}
  \begin{align}
    &\boldsymbol{M}_i \dot{\boldsymbol{v}}_i(t) + 
    \boldsymbol{K}_i \boldsymbol{d}_i(t) = \boldsymbol{f}_i(t)
    + \boldsymbol{C}_i^{\mathrm{T}} \boldsymbol{\lambda} 
    \quad \forall i = 1, \cdots ,S \\
    &\dot{\boldsymbol{d}_i}(t) = \boldsymbol{v}_i(t) \\
    &\sum_{i=1}^{S} \boldsymbol{C}_i \boldsymbol{v}_i(t) = 
    \boldsymbol{0}
\end{align}   
\end{subequations}
Under the backward Euler scheme, the velocities 
and accelerations are approximated as follows:
\begin{align}
  \label{S8:BE_approx}
  \boldsymbol{v}_{i}^{\left( n + 1 \right)} = 
  \frac{\boldsymbol{d}_i^{\left( n + 1 \right)} - 
    \boldsymbol{d}_i^{\left( n \right)}}{\Delta t}, 
  \quad  
  \boldsymbol{a}_{i}^{\left( n + 1 \right)} = 
  \frac{\boldsymbol{v}_i^{\left( n + 1 \right)} - 
    \boldsymbol{v}_i^{\left( n \right)}}{\Delta t} 
\end{align}
In the absence of subcycling, following a similar procedure 
presented in the previous sections, one can arrive at the 
following equation for the coupling method based on the 
backward Euler scheme:
\begin{align}
  \left\llbracket \mathcal{E}^{\left( n \right)}\right
  \rrbracket= -\sum_{i = 1}^{S} \left(\mathcal{T}_i 
  \left( \llbracket \boldsymbol{v}_i^{\left( n \right)}
    \rrbracket \right) + \mathcal{V}_i \left( \llbracket 
    \boldsymbol{d}_i^{\left( n \right)}\rrbracket \right) 
  \right) \quad \forall n
\end{align}
which is \emph{strictly negative} for any non-trivial motion 
of the subdomains. Figure \ref{Fig:Monolithic_energy_BE} 
nicely summarizes the above discussion using the split 
degree-of-freedom problem. The system shown in Figure
\ref{fig:SDOF2} was solved with the parameters: $m_A 
= 0.1$, $m_B = 0.005$, $k_A = 2.5$ and $k_B = 50$. The 
initial values are set to be as follows: $v_0 = 1.0$ 
and $d_0 = 0.1$. External forces are set to be zero. 
The proposed coupling method presented in this paper 
is employed to solve the coupled system using Newmark 
average acceleration and central difference methods, 
with no subcycling.   
In addition to the aforementioned excessive numerical 
dissipation, the following factors make BDF and IRK 
schemes not particularly suitable for developing a 
multi-time-step coupling:
\begin{enumerate}[(a)]
\item High-order BDF and IRK schemes are non-self-starting. 
\item BDF and IRK schemes are developed for first-order DAEs. 
  To solve a second-order DAE (which is the case in this paper), 
  auxiliary variables need to be introduced, which will increase 
  the number of unknowns and the computational cost. 
\item IRK schemes involve multiple stages, and are 
  generally considered difficult to implement. 
\end{enumerate}

%
\section{REPRESENTATIVE NUMERICAL RESULTS}
\label{Sec:S9_Monolithic_NR}
Using several canonical problems, we illustrate 
that the proposed multi-time-step coupling method 
possesses the following desirable properties:
\begin{enumerate}[(I)]
\item All subdomains can subcycle simultaneously. 
  That is, $\Delta t_i < \Delta t \; \forall i = 1, 
  \cdots, S$.
  \item The method can handle multiple subdomains. 
  \item Drift in displacements along the subdomain 
  interface is not significant. 
  \item Under fixed subdomain time-steps, the 
  accuracy of numerical solutions can be 
  improved by decreasing the system time-step.
\item For a fixed system time-step, accuracy of 
  the solutions \emph{may} be improved using 
  subcycling.  We also show that monitoring 
  $\mathcal{E}_{\mathrm{interface}}^{(n\rightarrow n+1)}$ 
  at every system time-step can serve as a simple 
  criterion to decide whether or not subcycling 
  will improve the accuracy. This criterion can 
  be calculated on the fly during a numerical 
  simulation. 
\end{enumerate}

\subsection{Split degree-of-freedom with three subdomains}
An attractive feature of the proposed coupling method is that 
it can handle multiple subdomains, which is illustrated in this 
test problem. The single degree-of-freedom is split into three 
subdomains $A$, $B$ and $C$, as shown in Figure \ref{fig:SDOF3}. 
The problem parameters are taken as follows: $m_A = 5$, $m_B 
= 0.1$, $m_C = 0.01$, $k_A = 5$, $k_B = 2.5$ and $k_C = 4$. 
Subdomain time-steps are taken as $\Delta t_A = 0.01$, $\Delta 
t_B = 0.005$ and $\Delta t_C = 0.0025$. The system time-step 
is taken as $\Delta t = 0.01$. Newmark average acceleration 
scheme is employed in all the subdomains. The subdomain external 
forces are taken as $f_A = f_C = 0$ and $f_B = 1$. The system 
is subject to the initial conditions $d_0 = 1.0$ and $v_0 = 0.0$. 
Figure \ref{Fig:Monolithic_3SDOF_kinematic} compares analytical 
solution with the numerical results for the kinematic quantities. 
Figure \ref{Fig:Monolithic_3SDOF_lambdas_energy} shows the 
Lagrange multipliers (i.e., interface forces) and the total 
energy of the system. The proposed coupling method performed 
well.

\subsection{One-dimensional problem with homogeneous properties}
Consider the vibration of a homogeneous one-dimensional 
elastic axial bar with the left end of the bar fixed and 
a constant tip load is applied at the right end of the 
bar. The governing equations take the following form: 
\begin{subequations}
  \begin{align}
    &\rho A \frac{\partial^2 u}{\partial t^2} - 
    \frac{\partial}{\partial x}\left(E A 
    \frac{\partial u}{\partial x}\right) 
    = P \delta(x=L) H(t=0) \quad \forall 
    x \in (0,L), \; \forall t \in (0, T] \\
      &u(x = 0, t) = 0 \quad \forall t \in (0, T] \\
        &E \frac{\partial u}{\partial x} (x = L, t) = 0 
        \quad \forall t \in (0, T] \\
          &u(x,t=0) = 0 \quad \forall x \in (0,L)\\
          &\frac{\partial u}{\partial t}(x,t=0) = 0 
          \quad \forall x \in (0,L)
  \end{align}
\end{subequations}
where $\delta(\cdot)$ is the Dirac-delta distribution, 
$H(\cdot)$ is the Heaviside function, and $P$ is a 
constant tip loading. The analytical solution for 
the displacement can be written as follows:
\begin{align}
  u(x,t) = \frac{Px}{EA} + \frac{8 P L}{\pi^2 EA} 
  \sum_{n=1,3, \cdots} (-1)^{\frac{n+1}{2}} 
  \frac{1}{n^2} \sin(\beta_n x) \cos(\omega_n t)
\end{align}
where 
\begin{align}
  \beta_n = \frac{n \pi}{2 L}, \quad 
  \omega_n = \beta_n \sqrt{\frac{E}{\rho}} = 
  \frac{n \pi}{2L} \sqrt{\frac{E}{\rho}} 
\end{align}
This test problem is the same as the one considered in 
Reference \cite{Prakash_Hjelmstad_IJNME_2004_v61_p2183} 
but with different parameters. Herein, we shall use this 
test problem to illustrate that the proposed coupling 
method can handle multiple subdomains simultaneously, which is not 
the case with the PH method as presented in 
\cite{Prakash_Hjelmstad_IJNME_2004_v61_p2183}.  

The computational domain is divided into three 
subdomains of equal lengths, as shown in Figure 
\ref{Fig:1_dimensional_mainfig}. Each subdomain 
is uniformly meshed using five two-node line 
elements. The Young's modulus is taken as $E 
= 10^{4}$, the density $\rho = 0.1$, the area 
of cross section $A = 1$, the total length of 
the bar $L = 1$, and the tip loading is taken 
as $P = 10$. 
Newmark average acceleration scheme is employed in 
subdomains $A$ and $C$ ($\beta_A = \beta_C = 1/4$ 
and $\gamma_A = \gamma_C = 1/2$), and the central 
difference scheme is employed in subdomain $B$ 
($\beta_B = 0$ and $\gamma_B = 1/2$).  
The critical time-step is $1.217 \times 10^{-4}$. The 
system time-step is taken as $\Delta t = 10^{-3}$. The 
subdomain time-steps for $A$ and $C$ are taken as $\Delta 
t_A = \Delta t_C = 10^{-3}$. The problem is solved using 
three different subdomain time-steps for $B$, which are 
defined through $\eta_B = \Delta t / \Delta t_B = 10, \; 
100, \; 1000$. 
Figure \ref{Fig:1_dimensional_dtip_energy} shows 
the tip displacement and the total energy obtained 
using the proposed coupling method. Figures 
\ref{Fig:1_dimensional_drift} and 
\ref{Fig:1_dimensional_lambdas}, respectively, 
show drift in displacements and the interface 
Lagrange multipliers. \emph{These figure clearly 
  illustrate that, under a fixed system time-step, 
  the accuracy can be improved by employing subcycling 
  in the subdomains under the proposed coupling method. 
  This implies that the time-step required for the 
  explicit scheme need not limit the time-step in 
  the entire computational domain under the proposed 
  multi-time-step coupling method.}

The problem is solved again with subdomain B divided 
into 10 two-node linear elements. In this case the 
critical time-step is approximately $6.085 \times 
10^{-5}$. We took the subdomain time-steps to be 
fixed at $10^{-5}$ and altered the system time-step 
to  illustrate the effect of subcycling. The results 
are presented in figures \ref{Fig:1_dimensional_dtip_energy_2}, 
\ref{Fig:1_dimensional_drift_2} and \ref{Fig:1_dimensional_lambdas_2}. 
\emph{These figures illustrate that, under the proposed 
  multi-time-step coupling method with fixed subdomain 
  time-steps (i.e., fixed $\Delta t_i $), the accuracy 
  can be improved by employing smaller system time-steps.}

\subsection{Square plate subjected to a corner force}
\label{Subsec:Monolithic_square_plate}
A bi-unit square of homogeneous elastic material 
is fixed at the left end and a constant force with 
components $f_x = f_y = 1$ is applied at the right 
bottom corner. 
The Lam\'e parameters are taken as $\lambda = 100$ and 
$\mu = 100$, and the mass density is taken as $\rho = 
100$. The computational domain is decomposed into four 
equally sized square subdomains. Four node quadrilateral 
elements are used to form a 5-element by 5-element mesh 
for each subdomain. Figure \ref{Fig:Monolithic_2D_pictorial} 
provides a pictorial description of the problem. 
A similar problem is also considered in Reference 
\cite{2010_Mahjoubi_Krenk_IJNME_v83_p1700}, which 
also addressed multi-time-step coupling method 
for structural dynamics. 

The central difference scheme $(\beta = 0, \gamma = 1/2)$ 
is employed for subdomains 1, 2 and 3, and Newmark average 
acceleration scheme $(\beta = 1/4, \gamma = 1/2)$ is employed 
for subdomain 4. 
Figure \ref{Fig:Monolithic_dve_h1} illustrates 
that the accuracy can be improved by decreasing 
the system time-step. 
Figure \ref{Fig:Monolithic_dve_h2} illustrates that 
the accuracy need not always improve by decreasing 
subdomain time-steps for a fixed system time-step. 
This is completely in accordance with the theoretical 
predictions. The numerical results shown in Figure 
\ref{Fig:Monolithic_dve_h2} also illustrate that the 
proposed coupling method allows subcycling in all 
the subdomains. This is evident from the fact that 
all the chosen values for $\eta_i \; (i = 1, \cdots, 
S)$ are greater than unity. 
As it can be seen in Figure \ref{Fig:2D_E_interface}, 
subcycling can result in increase in drift. 
Figure \ref{Fig:Monolithic_2D_d_and_v_x_dot8} shows that 
there is no \emph{appreciable} drift in displacements 
along the subdomain interface, and there is no drift 
in the velocities along the subdomain interface, as 
the proposed method imposes constraints on the continuity 
of velocities at every system time-step. 
Figure \ref{Fig:Bound_on_drift} shows that the 
theoretical bounds on the drifts in equations 
\eqref{Eqn:acc_drift}--\eqref{Eqn:disp_drift} 
match well with the numerical results.  
In all the numerical results, the proposed 
multi-time-step coupling method performed 
well, and behaved in accordance with the 
theoretical predictions derived in this 
paper. 

\subsection{Two-dimensional wave propagation problem}
\label{Subsec:2D_Wave}
Consider the transverse motion of a plate 
subject governed by the following equations:
\begin{subequations}
\label{Eqn:2D_Wave}
\begin{align}
  &\frac{1}{c_0^{2}} \frac{\partial^2 u}{\partial t^2} 
  -\left(\frac{\partial^2 u}{\partial x^2} + 
  \frac{\partial^2 u}{\partial y^2}
  \right) = f(\mathbf{x},t) \quad
  \forall \left( \mathbf{x}, t \right) \in \Omega \times \mathcal{I}\\
  &u(\mathbf{x},t) = u^{\mathrm{p}}(\mathbf{x},t) 
  \quad \left( \mathbf{x}, t \right) \in 
  \Gamma^{\mathrm{D}} \times \mathcal{I} \\
  &\mathrm{grad}[u] \cdot \widehat{\mathbf{n}} 
  (\mathbf{x}) = s^{\mathrm{p}}(\mathbf{x},t) \quad \left( \mathbf{x}, t \right) 
  \in \Gamma^{\mathrm{N}}\times \mathcal{I} \\
  &u(\mathbf{x},t = 0) = u_0(\mathbf{x}) \quad 
  \mathbf{x} \in \Omega \\
  &\dot{u}(\mathbf{x},t = 0) = v_0(\mathbf{x}) 
  \quad \mathbf{x} \in \Omega 
\end{align}
\end{subequations}
where $u(\mathbf{x},t)$ is the transverse 
displacement, $c_0$ is the wave velocity, $f(\mathbf{x},t)$ 
is the forcing function, $\widehat{\mathbf{n}}(\mathbf{x})$ 
is the unit outward normal to the boundary, $u^{\mathrm{p}}
(\mathbf{x},t)$ is the prescribed displacement on the 
boundary, $s^{\mathrm{p}}(\mathbf{x},t)$ is the prescribed 
transverse traction, and $u_0(\mathbf{x})$ and 
$v_0(\mathbf{x})$ are, respectively, the initial displacement 
and initial velocity. Computational domain is denoted by
$\Omega$. The part of the boundary on which Neumann 
boundary condition is denoted by $\Gamma^{\mathrm{N}}$, and 
$\Gamma^{\mathrm{D}}$ is the part of the boundary on which 
Dirichlet boundary condition is prescribed. As usual, 
we assume that $\Gamma^{\mathrm{D}} \cap \Gamma^{\mathrm{N}} 
= \emptyset$, and $\Gamma^{\mathrm{D}} \cup \Gamma^{\mathrm{N}} 
= \partial \Omega$. The time interval of interest is 
$\mathcal{I}$.

We consider the computational domain to be a rectangle 
with $L_x = 2$ and $L_y = 1$. The boundary is fixed on 
three sides, and is excited by a sinusoidal force of 
the following form on the other side:
\begin{align}
  f(\mathbf{x},t) = \left\{ \begin{array}{c} 
    f_0 \sin \left( \frac{2 \pi}{\tau_{\mathrm{load}}} t\right)
    \quad t \in \left[ 0 , \tau_{\mathrm{load}}\right] \\
    0 \quad \quad \quad \quad \quad t > \tau_{\mathrm{load}} 
  \end{array} \right. , 
  \quad \mathbf{x} \in \{ 0 \}\times[2L_y/5,3L_y/5]
\end{align}
A pictorial description of the problem is shown 
in Figure \ref{Fig:2D_Wave_Pictorial}. The domain 
is decomposed into two subdomain, as shown in 
Figure \ref{Fig:2D_Wave_DD}. In this numerical 
example, we have taken $u_0 = 0$, $v_0 = 0$, 
$c_0 = 1$, $f_0 = 5.0$ , and $\tau_{\mathrm{load}} 
= 0.1$. Figure \ref{Fig:2D_Wave_Case2} shows 
the result for explicit/implicit integration 
using the proposed coupling method. In this case,
$(\gamma_1,\beta_1) = (1/2,0)$, and $(\gamma_2,
\beta_2) = ( 1/2,1/4)$. The system time-step is 
$\Delta t = 10^{-4}$, subdomain time-steps are 
$\Delta t_1 = 10^{-5}$ and $\Delta t_2 = 10^{-4}$. 
As one can see from this figure, the proposed 
coupling method performed well. In particular, 
there are no spurious reflections at the subdomain 
interface, and there is no noticeable drift in 
the transverse displacement along the subdomain 
interface. 

This problem also clearly demonstrates that the 
proposed multi-time-step coupling method can be 
attractive for wave propagation problems. The 
coupling method is more cost effective than 
mere employing either an explicit scheme or 
an implicit scheme in the entire domain. In 
wave propagation problems 
involving fast dynamics, small time-steps are needed, 
and hence explicit schemes are typically employed. This 
will result in taking large number of time-steps to be 
able to carry out the numerical simulation to a desired 
final time. On the other hand, under the proposed coupling 
method, one can use explicit methods in the regions with 
fast dynamics (which typically occur near the loading), 
and use an implicit time-stepping scheme with a larger 
subdomain time-step in the other regions. 
For the chosen problem, if one has to employ 
an explicit scheme in the entire domain, the 
time-step should be smaller than the critical 
time-step of $1.36 \times 10^{-5}$. Under the 
proposed multi-time-step coupling method, 
the user can employ an explicit scheme with 
time-steps smaller than the critical time-step 
near the load, and an unconditionally stable,
implicit time-stepping scheme with larger time-steps 
in the rest of the computational domain.

%
\section{CONCLUDING REMARKS}
\label{Sec:S10_Monolithic_Conclusions}
We have developed a multi-time-step coupling method 
that can handle \emph{multiple subdomains} with 
different time-steps in different subdomains. The 
coupling method can couple implicit and explicit 
time-stepping schemes under the Newmark family 
even with disparate time-steps of more than two 
orders of magnitude in different subdomains.
A systematic study on the energy preservation and energy 
properties of the proposed coupling method is presented, 
and the corresponding sufficient conditions are also 
derived. The proposed coupling method, in general, is 
not energy preserving. Despite claims in the literature, 
the quest for energy preserving multi-time-step coupling 
method is still on. 
One of the main conclusions of this paper is about 
the effect of system time-step and subcycling on 
the accuracy. It has been shown that accuracy can 
always be improved by decreasing system time-step. 
It is widely believed that lowering subdomain 
time-step keeping the system time-step will also 
improve the accuracy under a multi-time-stepping 
scheme. Using careful mathematical analysis and 
numerical results, we have shown that this popular 
belief is not always the case. To this end, a simple 
criterion is also proposed, which can predict whether 
subcycling will improve accuracy. The criterion is to 
monitor $\mathcal{E}_{\mathrm{interface}}^{(n \rightarrow n+1)}$ 
at every system time-step, which can be calculated on 
the fly during a numerical simulation. Subcycling is 
desirable if this quantity is small. 

The proposed multi-time-step coupling (which is a dual 
Schur domain decomposition technique) is well-suited for 
parallel computing. Specifically, one can utilize the 
advances made on the FETI method, which has shown to 
be scalable in a parallel setting for dual Schur domain 
decomposition methods \cite{Farhat_Roux_IJNME_1991_v32_p1205}.
There are several ways one could make advancements to the research 
presented in this paper. \emph{On the theoretical front}, a plausible 
future work is to perform a mathematical analysis on the numerical 
characteristics of the proposed multi-time-step coupling method on 
the lines of local error, propagation of error, and influence of 
perturbations. \emph{On the computational implementation front}, 
one could implement the proposed coupling method in a parallel 
setting and do a systematic study on its parallel performance. 
\emph{On the algorithmic front}, the next logical step is to 
extend the proposed multi-time-step coupling method to first-order 
transient systems, and eventually to fluid-structure interaction 
problems.  

%
\section{APPENDIX: DERIVATION OF GOVERNING EQUATIONS FOR 
  CONSTRAINED MULTIPLE SUBDOMAINS}
\label{Sec:Monolithic_Appendix}
One can derive the constrained governing equations for second-order 
transient systems in several ways. For example, in Reference 
\cite{Prakash_Hjelmstad_IJNME_2004_v61_p2183} the equations 
have been derived using Lagrangian formalism. Herein, we shall 
derive the constrained governing equations for second-order 
systems using the Gauss principle of least constraint and 
Gibbs-Appell equations. Both these approaches can easily 
handle constraints of various types, including non-holonomic 
constraints and friction. Correctly, these two principles are 
considered more fundamental than Lagrangian or Hamiltonian 
formalisms, and are particularly ideal to deal with constraints 
\cite{Udwadia_Kalaba}. We briefly outline these two powerful 
approaches with a hope that the future works on multi-time-step 
coupling methods will utilize either the Gauss principle of 
least constraint or the Gibbs-Appell equations.

\subsection{Gauss principle of least constraint}
The statement of Gauss principle of least constraint for 
a system of $N$ subdomains takes the following form:
\begin{align}
  &\mathop{\mathrm{minimize}}_{\ddot{\boldsymbol{u}}_1, 
    \ddot{\boldsymbol{u}}_2, \cdots, \ddot{\boldsymbol{u}}_S} \quad  
  \frac{1}{2}\sum_{i = 1}^{S} \left(\ddot{\boldsymbol{u}}_{i} 
  - \boldsymbol{M}_{i}^{-1} \boldsymbol{h}_{i}\left(\dot{\boldsymbol{u}}_{i},
  \boldsymbol{u}_{i} , t\right)\right)^{\mathrm{T}} \boldsymbol{M}_{i} 
  \left(\ddot{\boldsymbol{u}}_{i} - \boldsymbol{M}_{i}^{-1} \boldsymbol{h}_{i}
  \left(\dot{\boldsymbol{u}}_{i},\boldsymbol{u_{i}} , t\right)\right) \\
  &\mbox{subject to} \quad \sum_{i=1}^{S} \boldsymbol{C}_i 
  \ddot{\boldsymbol{u}}(t) = \boldsymbol{0}
\end{align}	
Note that in a continuum setting, continuity of displacements, 
velocities and accelerations are all equivalent. The first-order 
optimality condition gives rise to the following equations:
\begin{subequations}
  \begin{align}
    \label{Eqn:Monolithic_subdomain}
    &\boldsymbol{M}_i \ddot{\boldsymbol{u}}_i = \boldsymbol{h}_i
    (\dot{\boldsymbol{u}}_i, \boldsymbol{u}_i, t) + 
    \boldsymbol{C}_i^{\mathrm{T}} \boldsymbol{\lambda} \\
    \label{Eqn:Monolithic_constraint}
    &\sum_{i=1}^{S} \boldsymbol{C}_i \ddot{\boldsymbol{u}} = \boldsymbol{0}
  \end{align}	
\end{subequations}
where $\boldsymbol{\lambda}$ is the vector of Lagrange 
multipliers for enforcing the constraint. Equations 
\eqref{Eqn:Monolithic_subdomain}--\eqref{Eqn:Monolithic_constraint} 
are the governing equations for constrained multiple subdomains. 	
In the case of linear elastic second-order systems, the vector 
$\boldsymbol{h}_i$ is given by the following expression:
\begin{align}
\boldsymbol{h}_i(\dot{\boldsymbol{u}}_i,\boldsymbol{u}_i,t) = 
-\boldsymbol{K}_i \boldsymbol{u}_i + \boldsymbol{f}_i^{\mathrm{ext}}(t)
\end{align}
It is noteworthy that the question whether the vector $\boldsymbol{h}_i$ 
can depend on acceleration is still a matter of debate among mechanicians. 
For further details on this issue see \cite{Pars_Dynamics,1998_Chen_JFI_v335_p871,1996_Leech_TM_v16_p221,2007_Zhechev_JAM_v74_p107}. 

\subsection{Gibbs-Appell equations}
The statement of Gibbs-Appell principle for a system of $N$ subdomains 
can be written as follows:
\begin{subequations}
  \begin{align}
    &\mathop{\mathrm{minimize}}_{\ddot{\boldsymbol{u}}_1,\ddot{\boldsymbol{u}}_2,\cdots,\ddot{\boldsymbol{u}}_S} 
    \quad \sum_{i=1}^{S} \left(\frac{1}{2} {\ddot{\boldsymbol{u}_i}}^{\mathrm{T}} 
    \boldsymbol{M}_i {\ddot{\boldsymbol{u}}}_i - 
               {\ddot{\boldsymbol{u}}_i}^{\mathrm{T}} 
               \boldsymbol{h}_i\right) \\
               &\mbox{subject to} \quad \sum_{i=1}^{S} \boldsymbol{C}_i 
               \ddot{\boldsymbol{u}}_i = \boldsymbol{0}
  \end{align}
\end{subequations}
The first-order optimality condition of the above constrained optimization 
problem gives rise to the same governing equations for constrained 
multiple subdomains (given by equations 
\eqref{Eqn:Monolithic_subdomain}--\eqref{Eqn:Monolithic_constraint}).

\section*{ACKNOWLEDGMENTS}
The authors acknowledge the support of the National 
Science Foundation under Grant no. CMMI 1068181. 
The opinions expressed in this paper are those 
of the authors and do not necessarily reflect 
that of the sponsor(s). 

\bibliographystyle{unsrt}
\bibliography{Master_References/Master_References,Master_References/Books}

\begin{figure}
  \centering
  \psfrag{w1}{$\Omega_1$}
  \psfrag{w2}{$\Omega_2$}
  \psfrag{wS}{$\Omega_S$}
  \psfrag{l}{$\boldsymbol{\lambda}$}
  \psfrag{CM}{Conforming mesh interface nodes}
  \psfrag{ir}{Interface interaction forces}
  \includegraphics[scale=0.65]{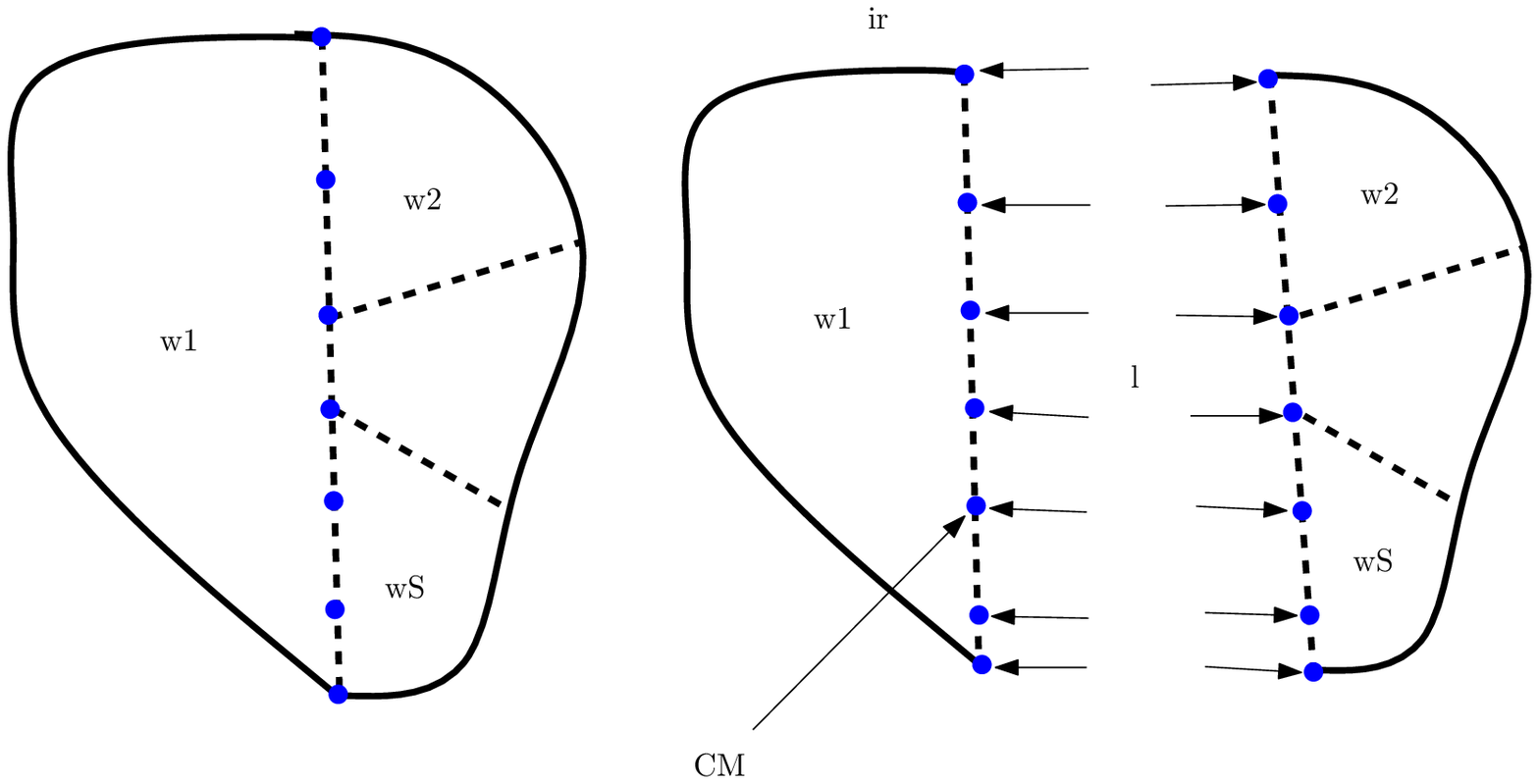}
  \caption{A pictorial description of multiple subdomains. The 
  domain $\Omega$ is decomposed into $S$ subdomains, which are 
  denoted by $\Omega_1, \cdots, \Omega_S$. The subdomain interface 
  is indicated using dashed curves. The mesh is assumed to be 
  conforming along the subdomain interface.  
  \label{Fig:Monolithic_subdomains}}
\end{figure}

\begin{figure}
  \psfrag{A}{$A$}
  \psfrag{B}{$B$}
  \psfrag{tn}{$t_{n}$}
  \psfrag{tn1}{$t_{n+1}$}
  \psfrag{dt}{$\Delta t$}
  \psfrag{dtf}{$\Delta t_A$}
  \psfrag{dts}{$\Delta t_B$}
  \psfrag{information}{information exchange}
  \psfrag{enforcement}{enforcement of compatibility conditions}
  \includegraphics[scale=0.55]{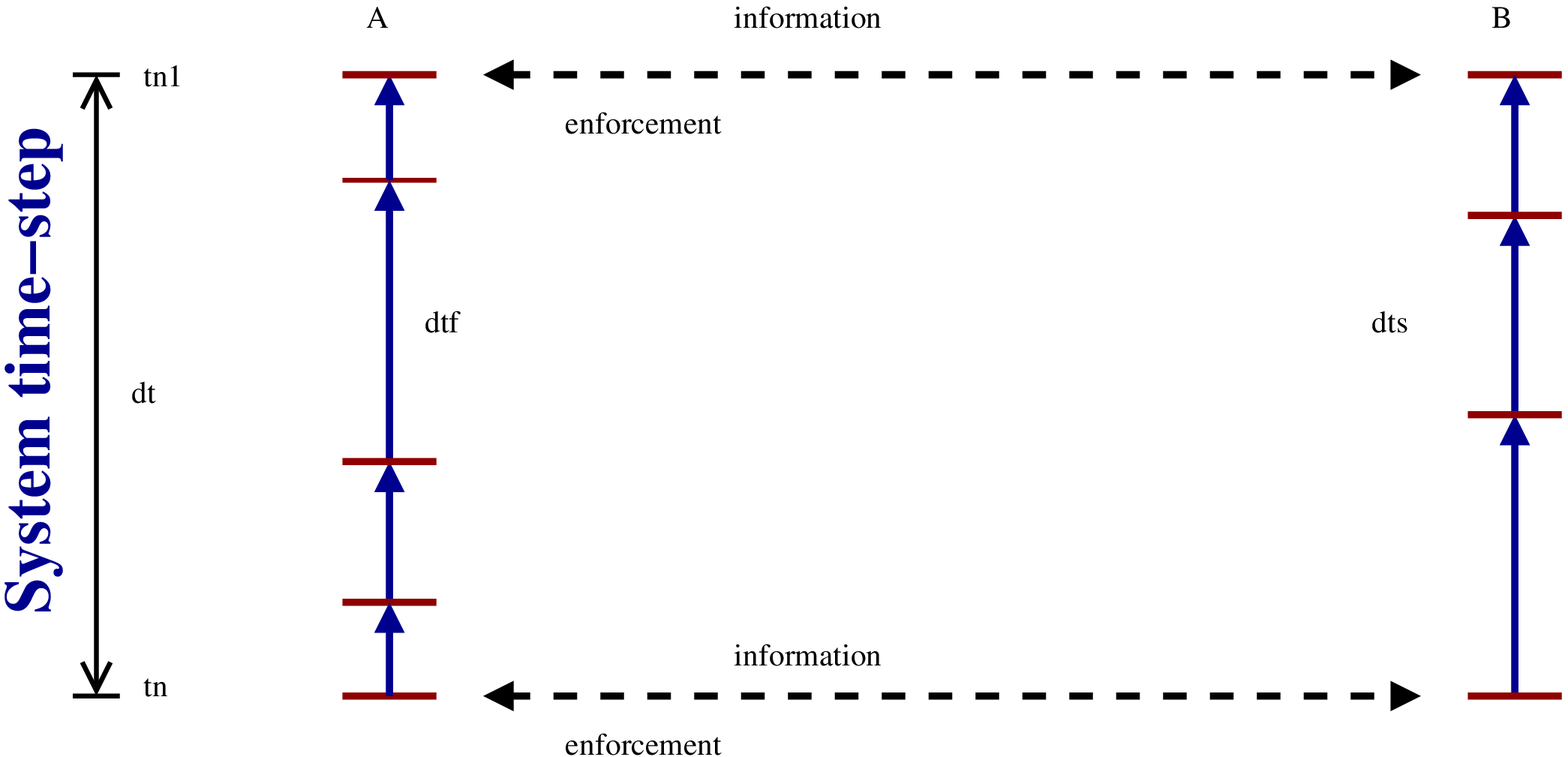}
  \caption{A pictorial description of time levels ($t_n$), 
    system time-step $(\Delta t)$, subdomain time-step 
    $(\Delta t_i)$, and subcycling. Note that $\eta_i = 
    \Delta t / \Delta t_i$. In this figure $i = A\; 
    \mbox{or}\; B$.
    \label{Fig:Monolithic_multi_time_step_notation}}
\end{figure}

\begin{figure}
  \centering
  \psfrag{ma}{$m_A$}
  \psfrag{mb}{$m_B$}
  \psfrag{ka}{$k_A$}
  \psfrag{kb}{$k_B$}
  \psfrag{fa}{$f_A$}
  \psfrag{fb}{$f_B$}
  \includegraphics[scale=0.7]{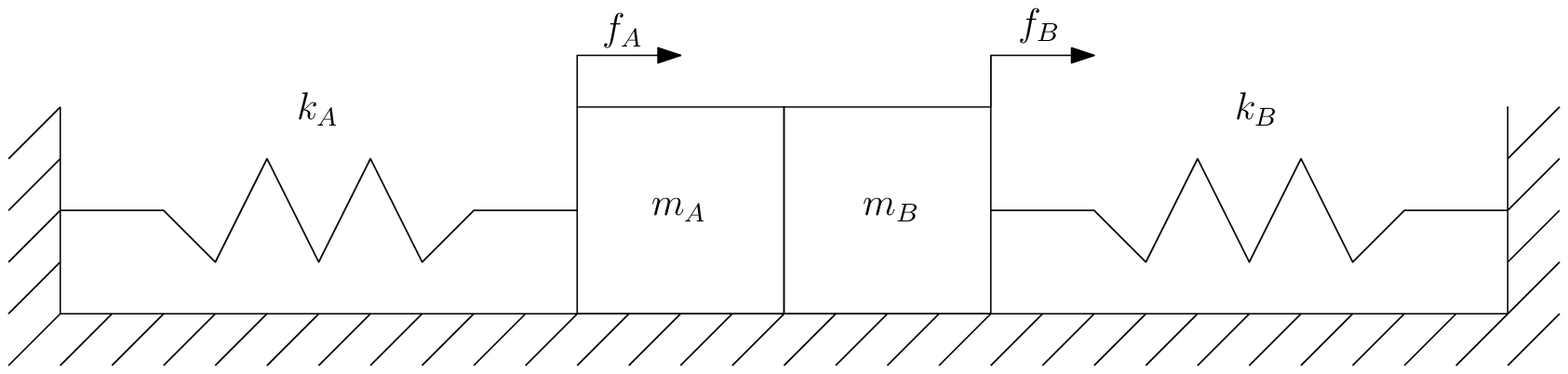}
  \caption{A pictorial description of the 
    split degree-of-freedom (SDOF) lumped 
    parameter system. The masses $A$ and 
    $B$ slide on a frictionless surface. 
    \label{fig:SDOF2}}
\end{figure}

\begin{figure}
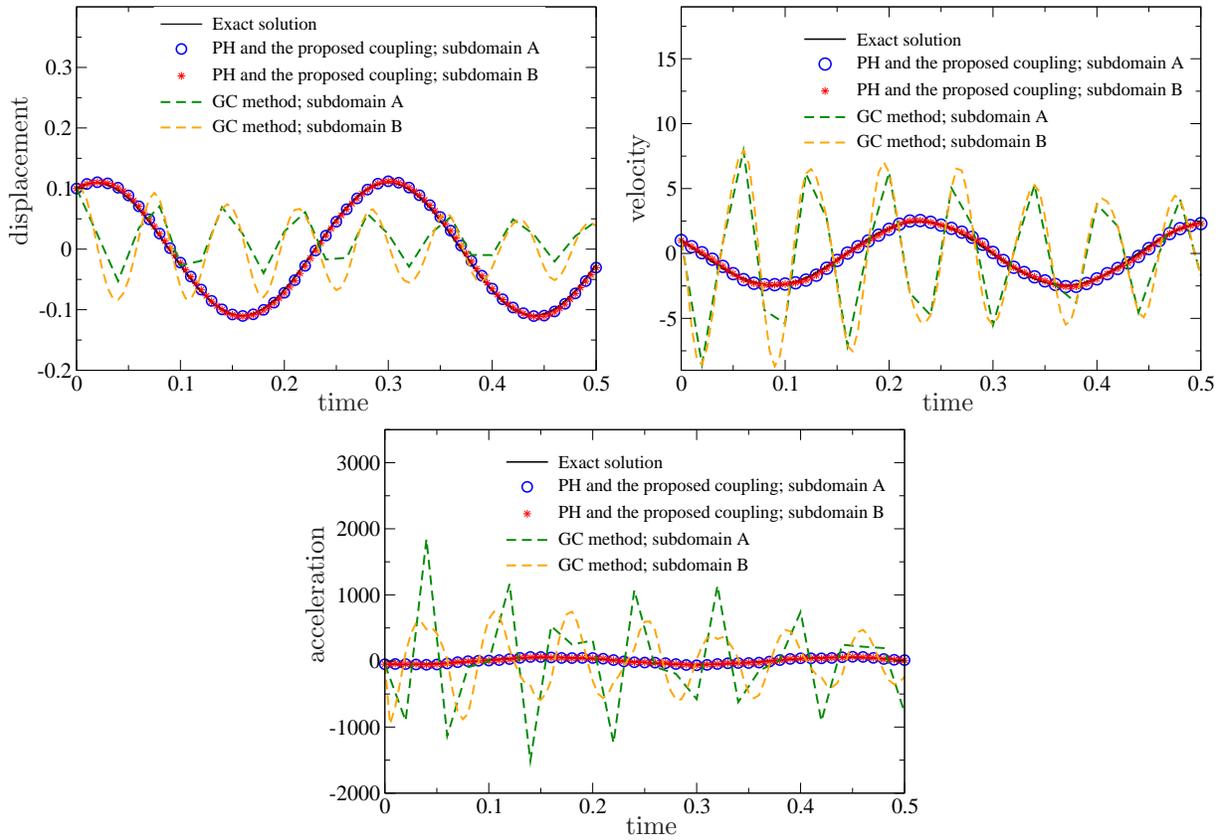

\centering
\psfrag{time}{time}
\psfrag{Analytic}{Exact}
\psfrag{displacement}{displacement}
\psfrag{displacement}{displacement}
\psfrag{velocity}{velocity}
\psfrag{acceleration}{acceleration}
\subfigure{
  \includegraphics[scale=0.32,clip]{Figures/SDOF2/disp.eps}}
\subfigure{
  \includegraphics[scale=0.32,clip]{Figures/SDOF2/vel.eps}}
\subfigure{
  \includegraphics[scale=0.32,clip]{Figures/SDOF2/acc.eps}}
\caption{SDOF lumped parameter system:~
  This figure compares the performance of the 
  proposed coupling method with that of the 
  GC and PH methods. The top, middle and 
  bottom subfigures, respectively, show 
  the displacement, velocity and acceleration. 
  It should be noted that, in the absence of 
  external forces, the proposed coupling method 
  and the PH method should produce the same 
  results, which is the case in this figure.
  The numerical results under the proposed 
  method match well with the analytical 
  solution. Note the rapid (unphysical) 
  decay under the GC method compared to 
  the other two methods.
  \label{Fig:SDOF_kinematic}}
\end{figure}

\begin{figure}
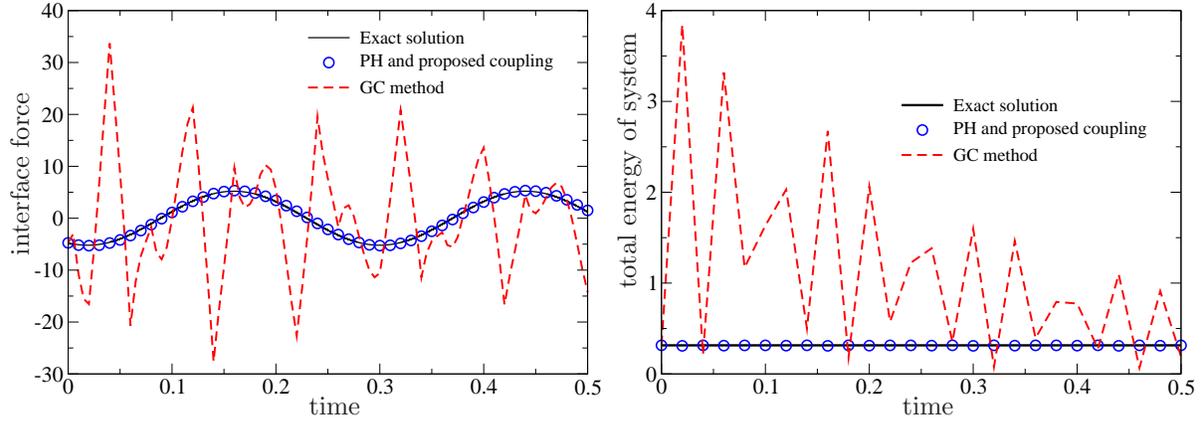

\centering
\psfrag{time}{time}
\psfrag{interface force}{interface force}
\psfrag{total energy of system}{total energy of system}
\subfigure{
  \includegraphics[scale=0.32,clip]{Figures/SDOF2/lam.eps}}
\subfigure{
  \includegraphics[scale=0.32,clip]{Figures/SDOF2/enrg.eps}}

\caption{SDOF lumped parameter system: The 
  top and bottom subfigures, respectively, 
  show the interface force and total energy 
  of the system. The numerical results under 
  the GC method do not match with the analytical 
  solution. 
  It should be noted that the total energy of 
  the system should be constant because the 
  system is elastic and the external force 
  is zero. As expected the GC method exhibits 
  decay in total energy. Although the proposed 
  coupling method and the PH method do not 
  preserve energy, they give close results 
  to the exact value for the chosen parameters. 
  However, this need not be the case if one 
  chooses a different time-stepping scheme. 
  \label{Fig:SDOF_lambda_energy}}
\end{figure}

\begin{figure}[h]
  \includegraphics[scale=0.45,clip]{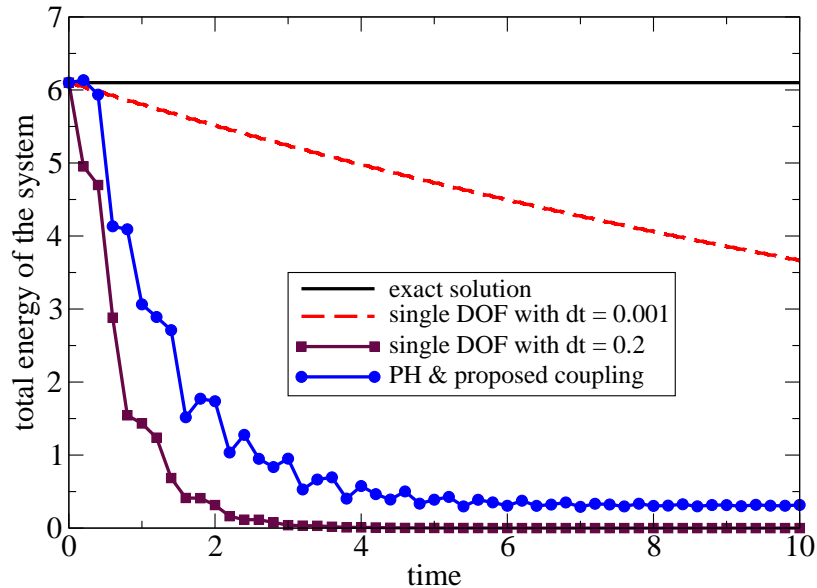}
  \caption{The problem parameters are $m_A = 1$, 
    $m_B = 1$, $k_A = 1000$, $k_B = 20$, $f_A = 0$, 
    $f_B = 0$, $\Delta t_A = 0.2$ and $\Delta t_B 
    = 0.001$. The Newmark parameters are taken 
    to be $(\beta = 0.3025, \gamma = 0.6)$. For 
    comparison, numerical solutions for single 
    degree of freedom (i.e., without splitting) 
    are also presented for two different time-steps 
    $\Delta t = 0.2$ and $\Delta t = 0.001$. It is 
    evident that the PH and proposed multi-time-step 
    couplings do not preserve energy in the first 
    sense. \label{Fig:Monolithic_energy_preservation}}
\end{figure}

\begin{figure}[h]
  \includegraphics[scale=0.4,clip]{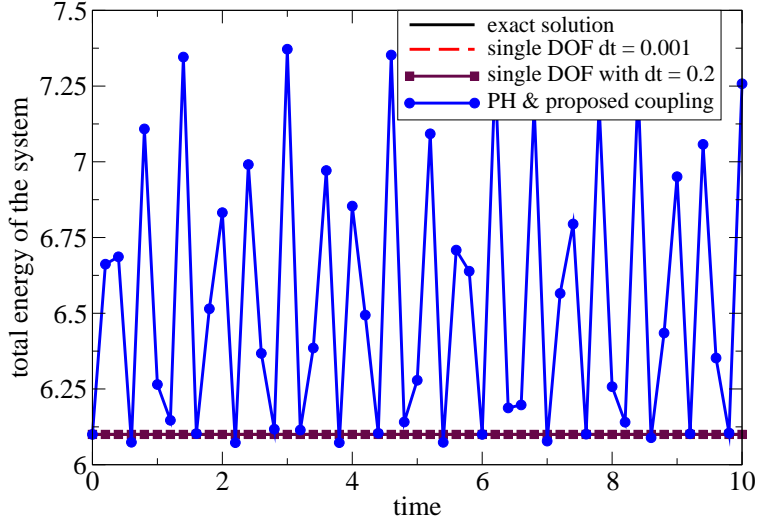}
  \caption{This figure illustrates that the proposed coupling 
    method does not conserve energy if there is subcycling. 
    The Newmark average acceleration method $(\beta = 0.25, 
    \gamma = 0.5)$ is employed in both subdomains.
  The problem parameters are $m_A = 1$, $m_B = 1$, $k_A = 1000$, 
  $k_B = 20$, $f_A = 0$, $f_B = 0$, $\Delta t_A = 0.2$ and $\Delta t_B 
  = 0.001$. For comparison, numerical solutions for single degree of 
  freedom (i.e., without splitting) are also presented for two different 
  time steps $\Delta t = 0.2$ and $\Delta t = 0.001$. This numerical 
  example can also serve to illustrate that the multi-time-step coupling 
  method preserves energy neither in the first sense nor in the second 
  sense. \label{Fig:Monolithic_multi_time_step_energy_plots}}
\end{figure}

\begin{figure}
  \centering
  \psfrag{energy}{energy}
  \psfrag{time}{time}
  \includegraphics[scale=0.37,clip]{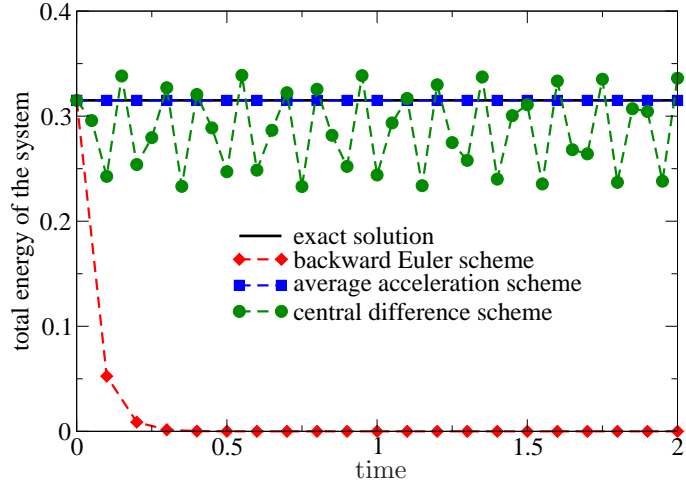}
  \caption{Coupling using the backward Euler scheme: 
    The second-order differential-algebraic equations 
    is converted to first-order differential-algebraic 
    equations, and the resulting system is solved using 
    the backward Euler scheme, which is the simplest 
    member of backward difference formulae (BDF) and 
    implicit Runge-Kutta (IRK) schemes. 
    BDF and IRK schemes are frequently employed to 
    solve differential-algebraic equations. As one 
    can see from the figure, the numerical solution 
    using the backward Euler is highly dissipative. 
    On the other hand, the proposed multi-time-step 
    coupling method based on Newmark family of time 
    integrators has better performance with respect 
    to the total energy of the system, which is an invariant 
    for the problem at hand.
    The proposed coupling method was employed to seek
    the numerical solution using Newmark average
    acceleration and central difference schemes with
    no subcycling. In the case of Newmark average 
    acceleration scheme, $\Delta t = 0.1$. A time-step
    of $\Delta t = 0.05$ was adopted for central difference
    scheme. For the backward Euler method $\Delta t = 0.1$.  
    \label{Fig:Monolithic_energy_BE}}
\end{figure}

\begin{figure}
  \centering
  \psfrag{lAB}{$\lambda_{AB}(t)$}
  \psfrag{lBC}{$\lambda_{BC}(t)$}
  \psfrag{kA}{$k_A$}
  \psfrag{kB}{$k_B$}
  \psfrag{kC}{$k_C$}
  \psfrag{mA}{$m_A$}
  \psfrag{mB}{$m_B$}
  \psfrag{mC}{$m_C$}
  \psfrag{fA}{$f_A(t)$}
  \psfrag{fB}{$f_B(t)$}
  \psfrag{fC}{$f_C(t)$}
  \subfigure{
    \includegraphics[scale=0.75]{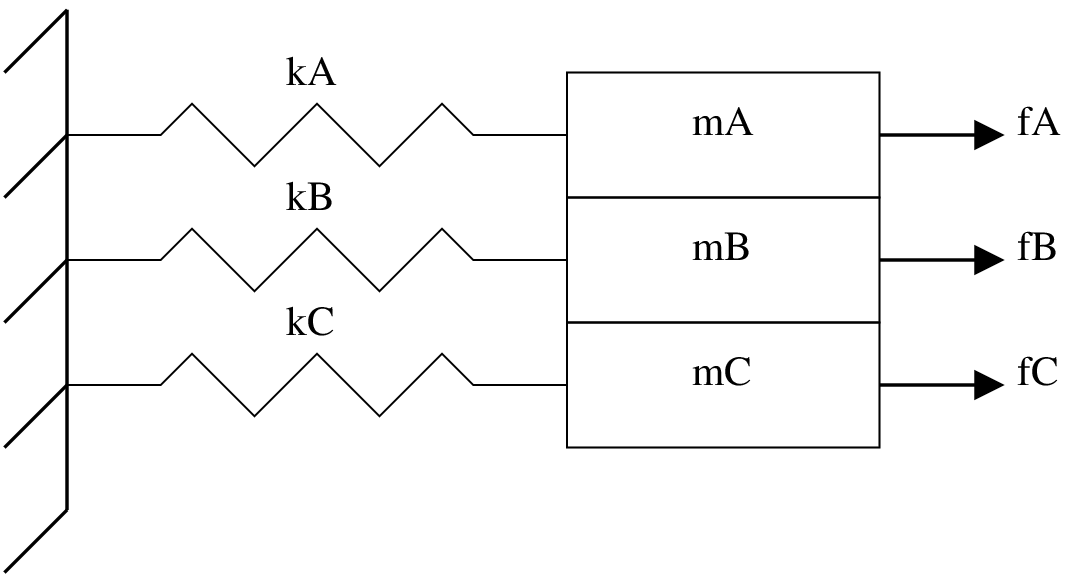}}
  \subfigure{
    \includegraphics[scale=0.75]{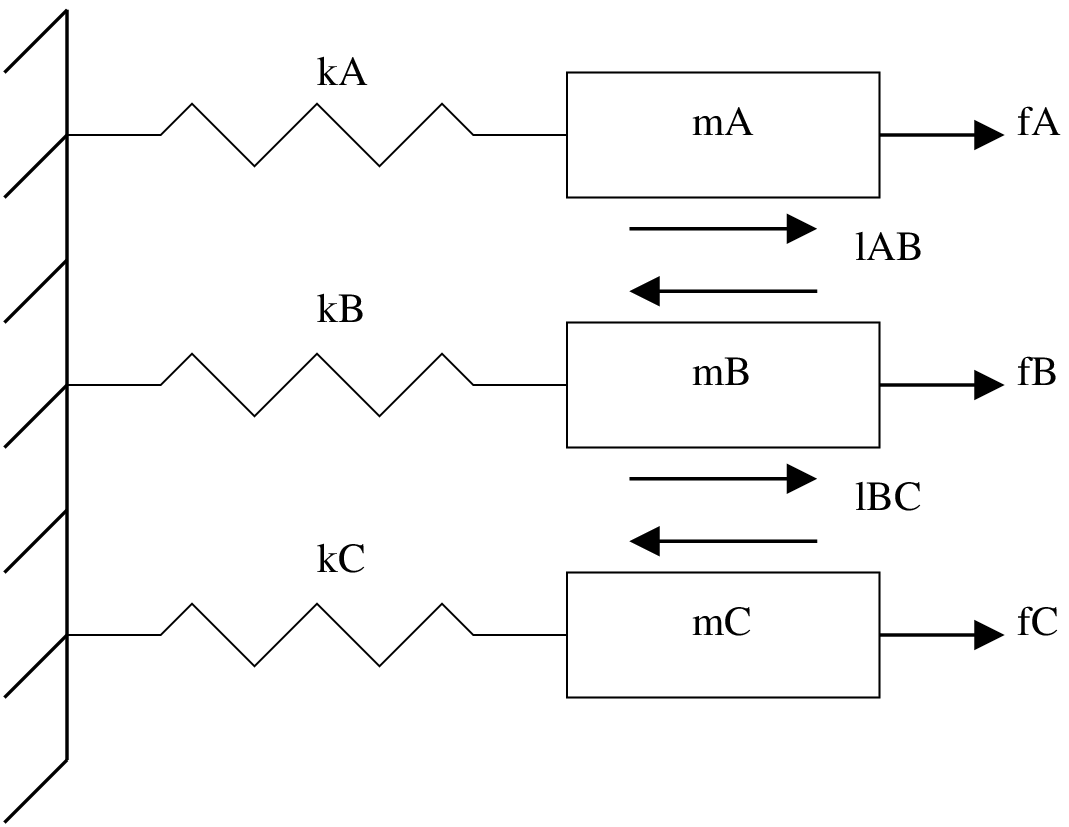}}
  \caption{Split degree-of-freedom with three subdomains: 
    The problem is solved using the proposed multi-time-step 
    coupling method. The problem parameters are $m_A = 5$, 
    $m_B = 0.1$, $m_C = 0.01$, $k_A = 5$, $k_B = 2.5$ and 
    $k_C = 4$. The subdomain time-steps are taken as $\Delta 
    t_A = 0.01$, $\Delta t_B = 0.005$ and $\Delta t_C = 
    0.0025$. Newmark parameters are $\beta_A = \beta_B = 
    \beta_C = 0.25$ and $\gamma_A = \gamma_B = \gamma_C = 0.5$.
      \label{fig:SDOF3}}
\end{figure}

\begin{figure}
  \centering
  \subfigure{
    \includegraphics[scale=0.32,clip]{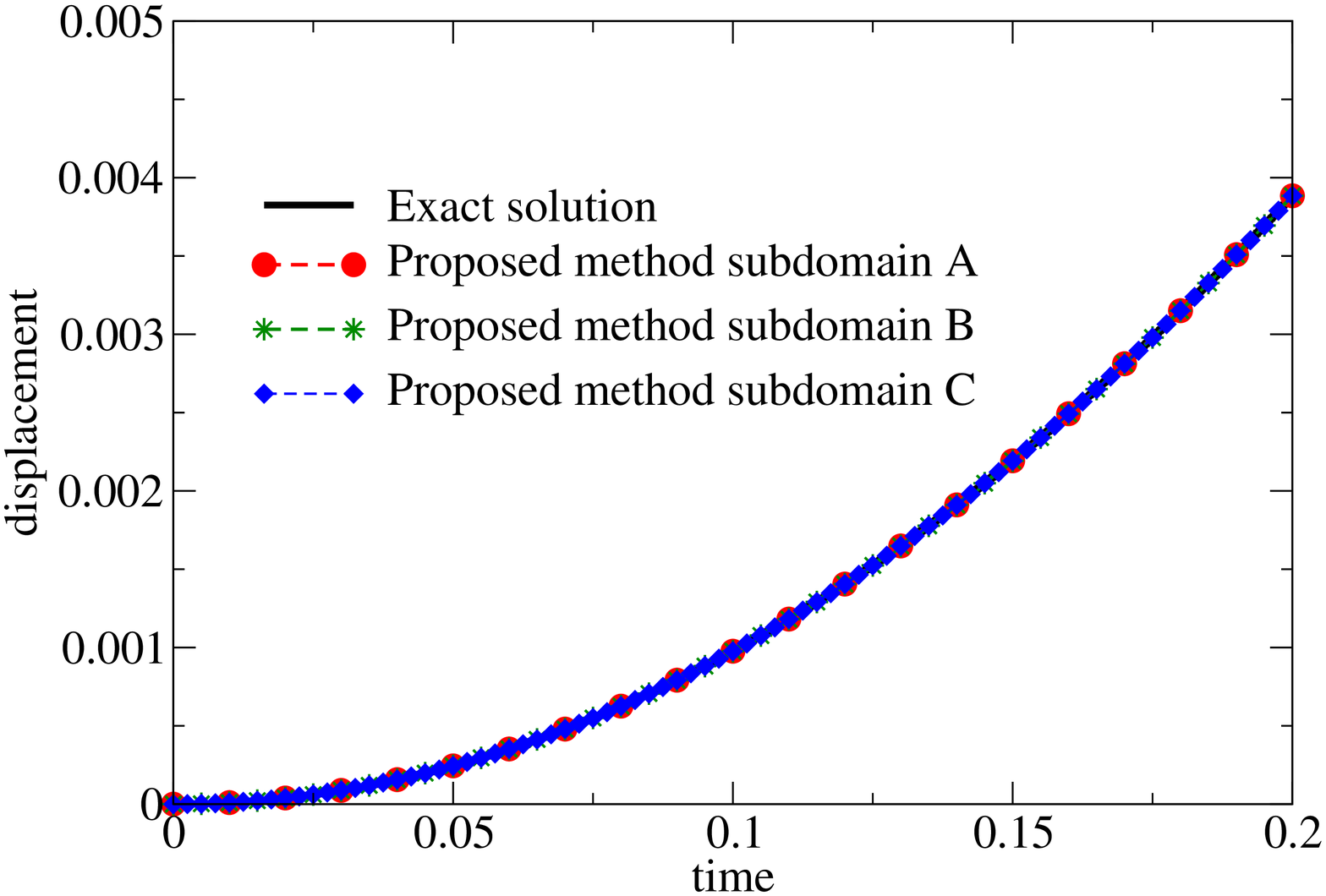}}
  \subfigure{
    \includegraphics[scale=0.32,clip]{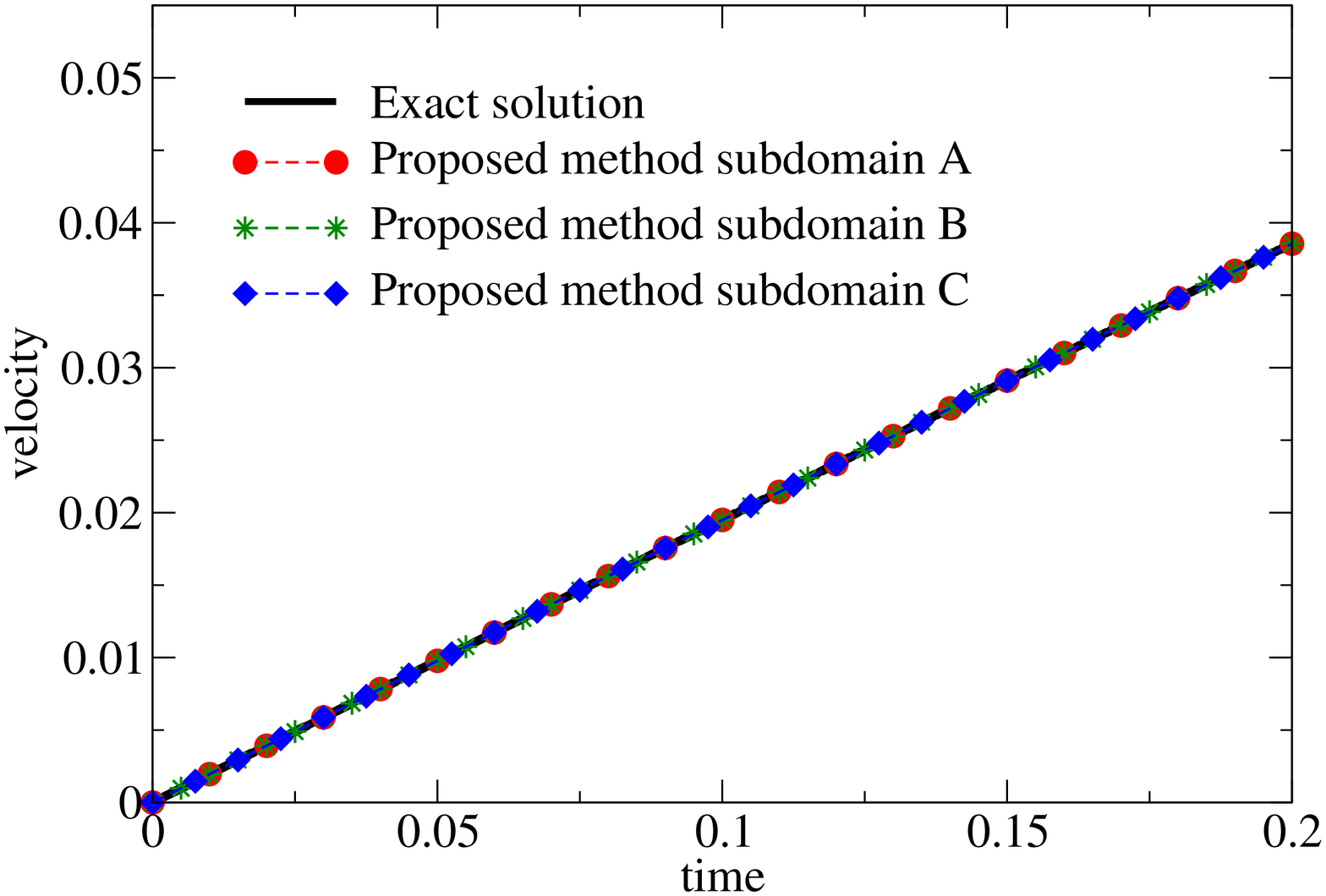}}
  \subfigure{
    \includegraphics[scale=0.32,clip]{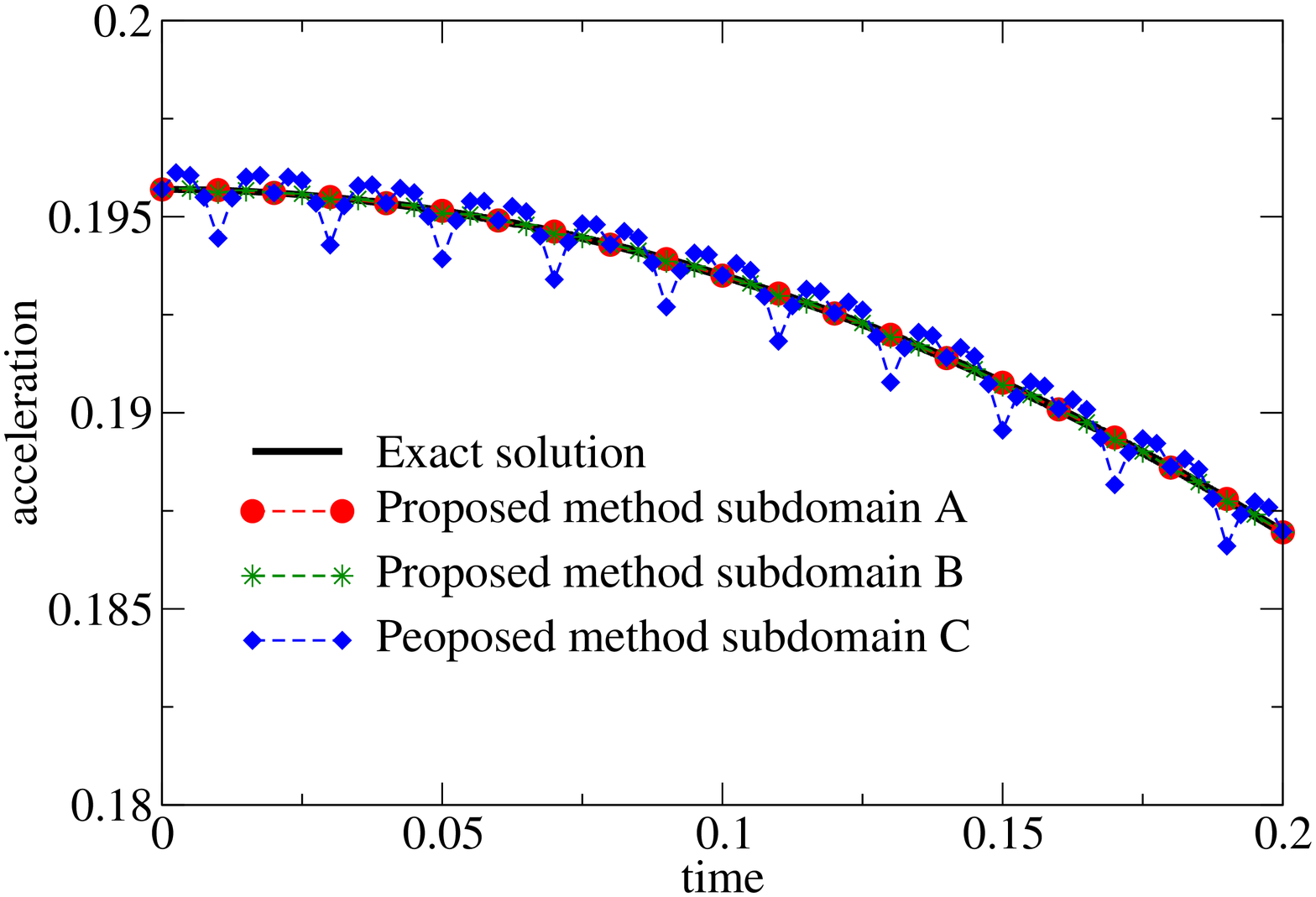}}
  \caption{Split degree-of-freedom with three subdomains: 
    Numerical and analytical results for displacement in 
    problem 2 is shown in this figure. As seen here, the 
    numerical results under the proposed coupling method 
    matches well with the exact values. 
    \label{Fig:Monolithic_3SDOF_kinematic}}
\end{figure}

\begin{figure}
  \centering
  \psfrag{time}{time}
  \psfrag{energy}{energy}
  \psfrag{interface force}{interface force}
  \subfigure{
  \includegraphics[scale=0.32,clip]{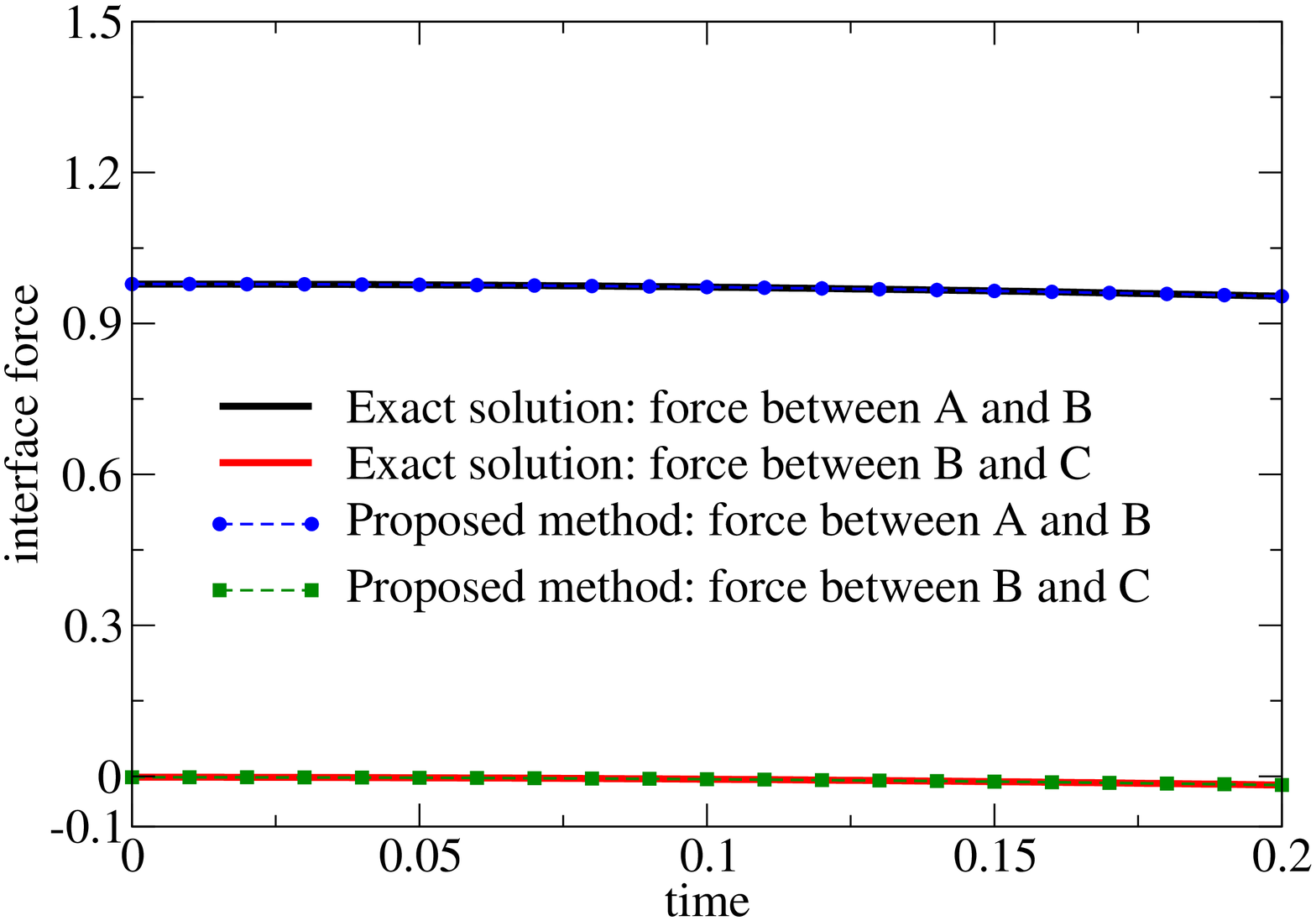}}
  \subfigure{
  \includegraphics[scale=0.32,clip]{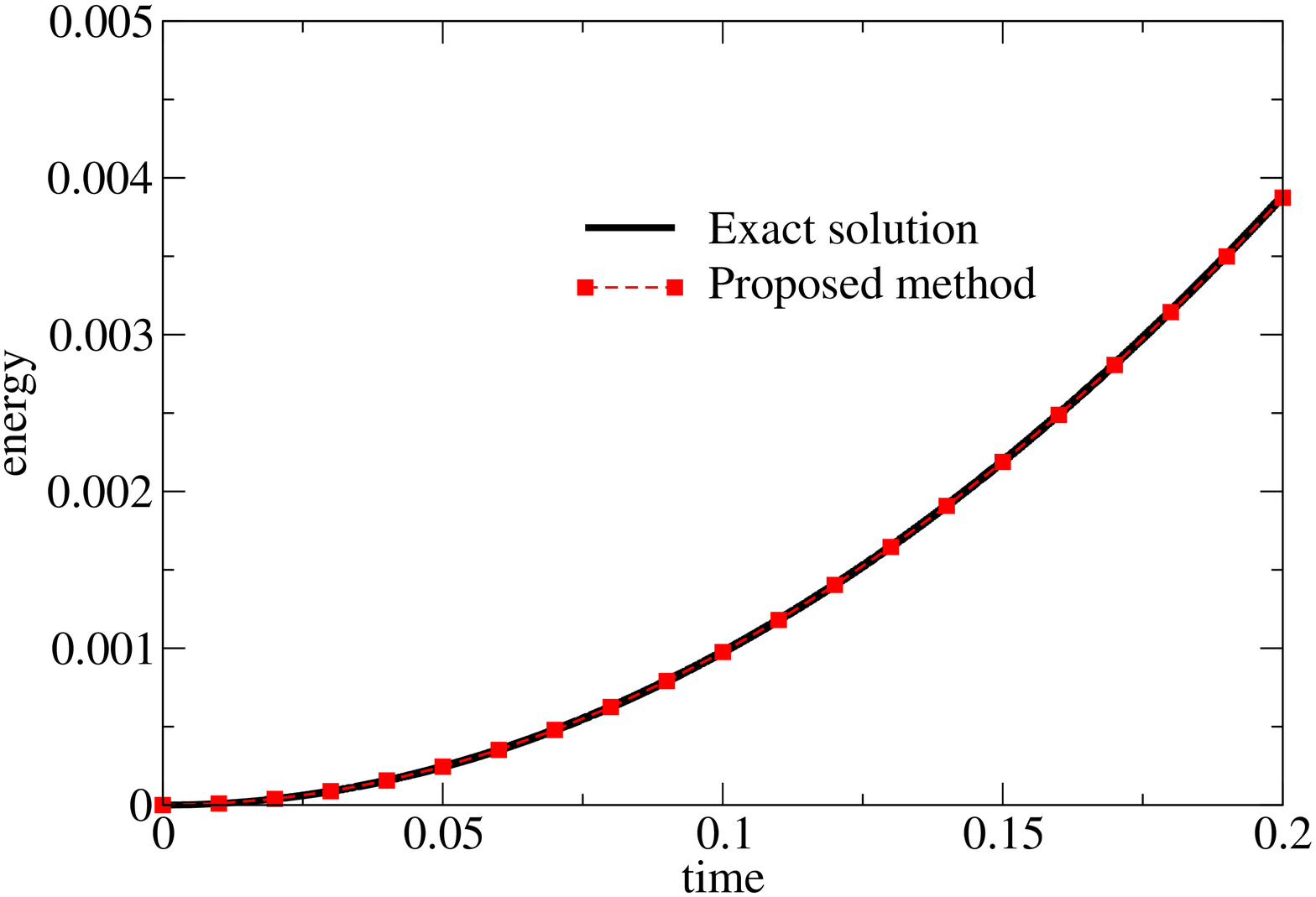}}
  \caption{Split degree-of-freedom with three subdomains: 
    The top figure shows the interface reaction forces 
    $\lambda_{AB}$ and $\lambda_{BC}$ with respect to 
    time. This bottom figure shows the total energy 
    of the system with respect to time. The numerical 
    results under the proposed coupling method match 
    well with the analytical solution. 
    \label{Fig:Monolithic_3SDOF_lambdas_energy}}
\end{figure}


\begin{figure}
	\centering
  \psfrag{SA}{subdomain $A$}
  \psfrag{SB}{subdomain $B$}
  \psfrag{SC}{subdomain $C$}
  \psfrag{P}{$P(t)$}
  \psfrag{LA}{$L_A = 1/3$}
  \psfrag{LB}{$L_B = 1/3$}
  \psfrag{LC}{$L_C = 1/3$}
   \includegraphics[scale=0.7]{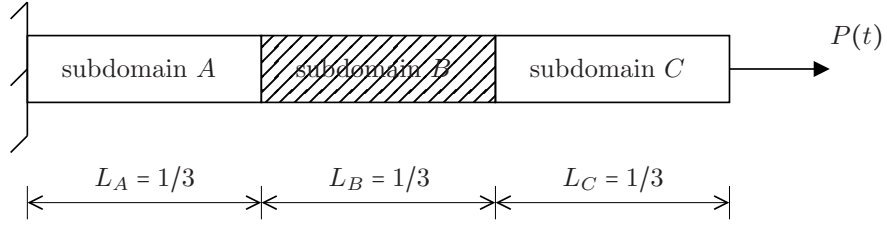}
  \caption{One-dimensional problem with homogeneous 
    properties:~Consider an axial elastic bar of unit 
    length. The left end of the bar is fixed, and a 
    constant load of $P(t) = 10$ is applied to the 
    right end of the bar. The initial displacement 
    and the initial velocity are both assumed to be 
    zero. The proposed coupling method is employed to 
    solve this problem by decomposing the computational 
    domain into three subdomains, which are denoted by 
    $A$, $B$ and $C$. Each subdomain is meshed 
    with two-node line elements. The left and right 
    subdomains employ Newmark average acceleration scheme 
    (i.e., $\beta_A = \beta_C = 1/4$ and $\gamma_A = 
    \gamma_C = 1/2$), and the middle subdomain employs 
    the central difference scheme (i.e., $\beta_B = 0$ 
    and $\gamma_B = 1/2$). 
    \label{Fig:1_dimensional_mainfig}}
\end{figure}

\clearpage 


\begin{figure}
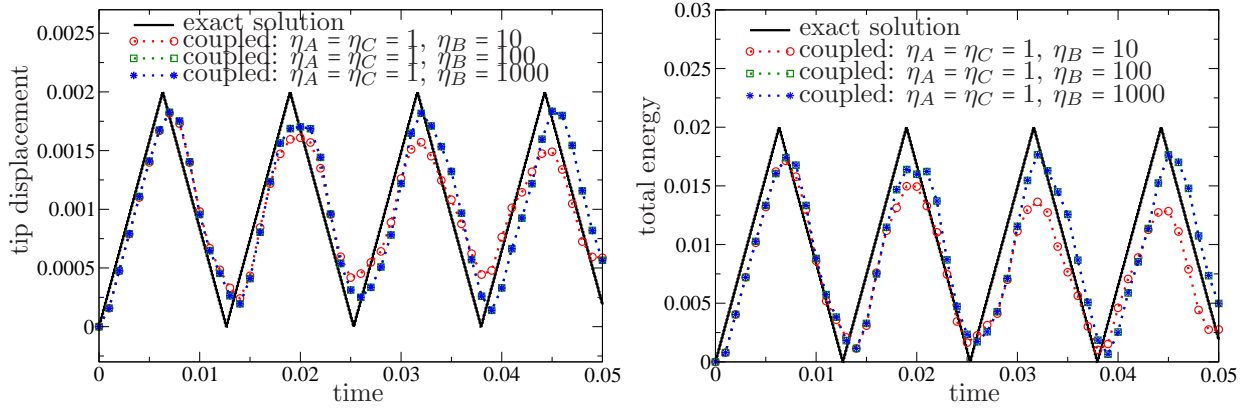

  \centering
  \psfrag{time}{time}
  \psfrag{exact}{exact solution}
  \psfrag{total energy}{total energy}
  \psfrag{tip displacement}{tip displacement}
  \psfrag{r1000}{coupled: $\eta_A = \eta_C = 1,\;\eta_B = 1000$}
  \psfrag{r10}{coupled: $\eta_A = \eta_C = 1, \; \eta_B = 10$}
  \psfrag{r100}{coupled: $\eta_A = \eta_C = 1, \; \eta_B = 100$}
  \subfigure{
  \includegraphics[scale=0.31,clip]{Figures/1D_3Subdomain/5-5-5/d_tip.eps}}
  \subfigure{
  \includegraphics[scale=0.31,clip]{Figures/1D_3Subdomain/5-5-5/enrg.eps}}
  \caption{One-dimensional problem with homogeneous 
  properties:~The top and bottom figures, respectively, 
  show the tip displacement and total energy as a 
  function of time. 
    The left and right subdomains employ Newmark average 
    acceleration scheme (which is an implicit scheme), 
    and the middle subdomain employs the central difference 
    scheme (which is an explicit scheme). The system 
    time-step is fixed and is taken as $\Delta t = 10^{-3}$. 
    The subdomain time-steps in the left and right subdomains 
    are chosen to be equal to the system 
    time-step. The time-step in the middle subdomain is 
    varied, and three different values are employed, which 
    are given by $\eta_B := \Delta t / \Delta t_B = 10, 100, 
    1000$. 
    \emph{The figure clearly shows that, under a fixed 
      system time-step, the accuracy can be improved by 
      employing subcycling in the subdomains under the 
      proposed coupling method. This implies that the 
      time-step required for the explicit scheme need 
      not limit the time-step in the entire computational 
      domain under the proposed multi-time-step coupling 
      method.} \label{Fig:1_dimensional_dtip_energy}}
\end{figure}

\begin{figure}
  \centering
  \psfrag{time}{time}
  \psfrag{drift AB}{$d_{\mbox{drift}}$ at AB interface}
  \psfrag{drift BC}{$d_{\mbox{drift}}$ at BC interface}
  \psfrag{r1000}{coupled: $\eta_A = \eta_C = 1,\; \eta_B = 1000$}
  \psfrag{r10}{coupled: $\eta_A = \eta_C = 1, \; \eta_B = 10$}
  \psfrag{r100}{coupled: $\eta_A = \eta_C = 1, \; \eta_B = 100$}
  \subfigure{
    \includegraphics[clip,scale=0.31]{Figures/1D_3Subdomain/5-5-5/driftAB.eps}}
  \subfigure{
    \includegraphics[clip,scale=0.31]{Figures/1D_3Subdomain/5-5-5/driftBC.eps}}
  \caption{One-dimensional problem with homogeneous properties:  
    The top figure shows the drift in the displacement at the 
    interface of subdomains $A$ and $B$. The bottom figure 
    shows the drift in displacement at the interface of 
    subdomains $B$ and $C$. At least for this problem, 
    one can conclude that there is no appreciable drift 
    in the displacements under the proposed coupling 
    method. \label{Fig:1_dimensional_drift}}
\end{figure}

\begin{figure}
  \centering
  \psfrag{time}{time}
  \psfrag{lambda AB}{$\lambda_{AB}$}
  \psfrag{lambda BC}{$\lambda_{BC}$}
  \psfrag{exact}{exact solution}
  \psfrag{r1000}{proposed method: $\eta_A = \eta_C = 1,\; \eta_B = 1000$}
  \psfrag{r10}{proposed method: $\eta_A = \eta_C = 1, \; \eta_B = 10$}
  \psfrag{r100}{proposed method: $\eta_A = \eta_C = 1, \; \eta_B = 100$}
  \subfigure{
    \includegraphics[clip,scale=0.4]{Figures/1D_3Subdomain/5-5-5/L-AB.eps}}
  \subfigure{
    \includegraphics[clip,scale=0.4]{Figures/1D_3Subdomain/5-5-5/L-BC.eps}}
  \caption{One-dimensional problem with homogeneous 
    properties: The top figure shows the interface 
    force between subdomains $A$ and $B$. The bottom 
    figure shows the interface force between subdomains 
    $B$ and $C$. \label{Fig:1_dimensional_lambdas}}
\end{figure}


\begin{figure}
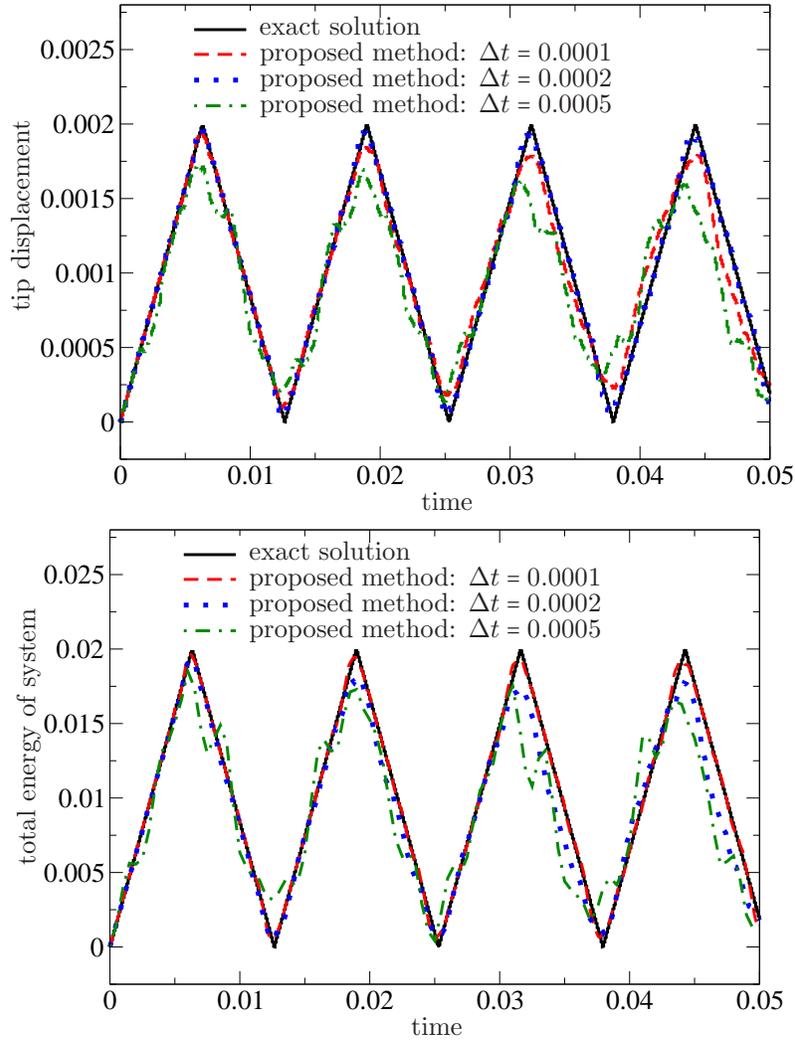

  \centering
  \psfrag{time}{time}
  \psfrag{exact}{exact solution}
  \psfrag{d_tip}{tip displacement}
  \psfrag{enrg}{total energy of system}
  \psfrag{dt1}{proposed method: $\Delta t = 0.0001$}
  \psfrag{dt2}{proposed method: $\Delta t = 0.0002$}
  \psfrag{dt3}{proposed method: $\Delta t = 0.0005$}
  \subfigure{
  \includegraphics[scale=0.4,clip]{Figures/1D_3Subdomain/5-10-5-set2/d.eps}}
  \subfigure{
  \includegraphics[scale=0.4,clip]{Figures/1D_3Subdomain/5-10-5-set2/enrg.eps}}
  \caption{One-dimensional problem with homogeneous 
  properties:~The top and bottom figures, respectively 
  show the tip displacement and the total energy as a 
  function of time. 
    The left and right subdomains employ Newmark average 
    acceleration scheme (which is an implicit scheme), 
    and the middle subdomain employs the central difference 
    scheme (which is an explicit scheme). The subdomain time-steps
    are fixed at $10^{-5}$. In each case the system time-step is
    changed as indicated in the figure. The numerical solutions 
    match well with the exact solution. 
    \emph{The figure clearly shows that, under the proposed 
      multi-time-step coupling method with fixed subdomain 
      time-steps (i.e., fixed $\Delta t_i $), 
      the accuracy can be improved by employing smaller 
      system time-steps.} 
      \label{Fig:1_dimensional_dtip_energy_2}}
\end{figure}

\begin{figure}
  \centering
  \psfrag{time}{time}
  \psfrag{drift AB}{$d_{\mbox{drift}}$ at AB interface}
  \psfrag{drift BC}{$d_{\mbox{drift}}$ at BC interface}
  \psfrag{dt1}{proposed method: $\Delta t = 0.0001$}
  \psfrag{dt2}{proposed method: $\Delta t = 0.0002$}
  \psfrag{dt3}{proposed method: $\Delta t = 0.0005$}
  \subfigure{
    \includegraphics[clip,scale=0.4]{Figures/1D_3Subdomain/5-10-5-set2/driftAB.eps}}
  \subfigure{
    \includegraphics[clip,scale=0.4]{Figures/1D_3Subdomain/5-10-5-set2/driftBC.eps}}
  \caption{One-dimensional problem with homogeneous properties:  
    The top figure shows the drift in the displacement at the 
    interface of subdomains $A$ and $B$. The bottom figure 
    shows the drift in displacement at the interface of 
    subdomains $B$ and $C$. \emph{At least for this test 
      problem, one can conclude that there is no appreciable 
      drift in the displacements under the proposed coupling 
      method.} \label{Fig:1_dimensional_drift_2}}
\end{figure}

\begin{figure}
  \centering
  \psfrag{time}{time}
  \psfrag{lambda AB}{$\lambda_{AB}$}
  \psfrag{lambda BC}{$\lambda_{BC}$}
  \psfrag{exact}{exact solution}
  \psfrag{dt1}{proposed method: $\Delta t = 0.0001$}
  \psfrag{dt2}{proposed method: $\Delta t = 0.0002$}
  \psfrag{dt3}{proposed method: $\Delta t = 0.0005$}
  \subfigure{
    \includegraphics[clip,scale=0.4]{Figures/1D_3Subdomain/5-10-5-set2/lAB.eps}}
  \subfigure{
    \includegraphics[clip,scale=0.4]{Figures/1D_3Subdomain/5-10-5-set2/lBC.eps}}
  \caption{One-dimensional problem with homogeneous 
    properties: The top figure shows the interface 
    force between subdomains $A$ and $B$. The bottom 
    figure shows the interface force between subdomains 
    $B$ and $C$. 
   \label{Fig:1_dimensional_lambdas_2}}
\end{figure}


\begin{figure}
	\centering
	\psfrag{Lx}{$L_x/2 = 0.5$}
	\psfrag{Ly}{\begin{sideways} $L_y/2 = 0.5$ \end{sideways}}
	\psfrag{subdomain 1}{subdomain 1}
	\psfrag{subdomain 2}{subdomain 2}
	\psfrag{subdomain 3}{subdomain 3}
	\psfrag{subdomain 4}{subdomain 4}
	\psfrag{x}{$x$}
	\psfrag{y}{$y$}
	\psfrag{fx}{$f_x$}
	\psfrag{fy}{$f_y$}
	\psfrag{pA}{{\color{blue}Point A}}
	\includegraphics[clip, scale=0.85]{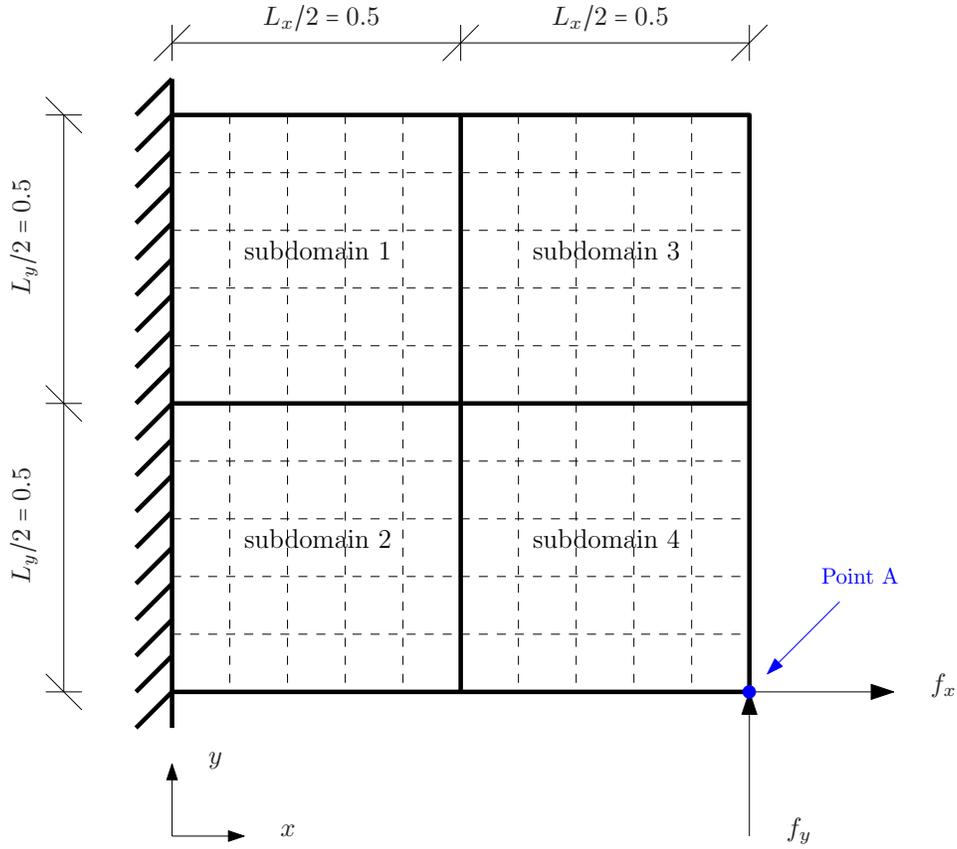}
	\caption{Square plate subjected to a corner force: 
	This figure provides a pictorial description of the test 
	problem. A bi-unit square of homogeneous elastic material 
	is fixed at the left side, a constant force with components 
	$f_x = f_y = 1$ is applied at Point A. Material parameters 
	are taken as $\lambda = 100$, $\mu = 100$, and $\rho = 100$. 
	The computational domain is divided into four subdomains, 
	and the resulting problem is solved using the proposed 
	multi-time-step coupling method. Each subdomain is meshed 
	using 25 equally-sized four-node quadrilateral elements. 
	\label{Fig:Monolithic_2D_pictorial}}
\end{figure}

\begin{figure}
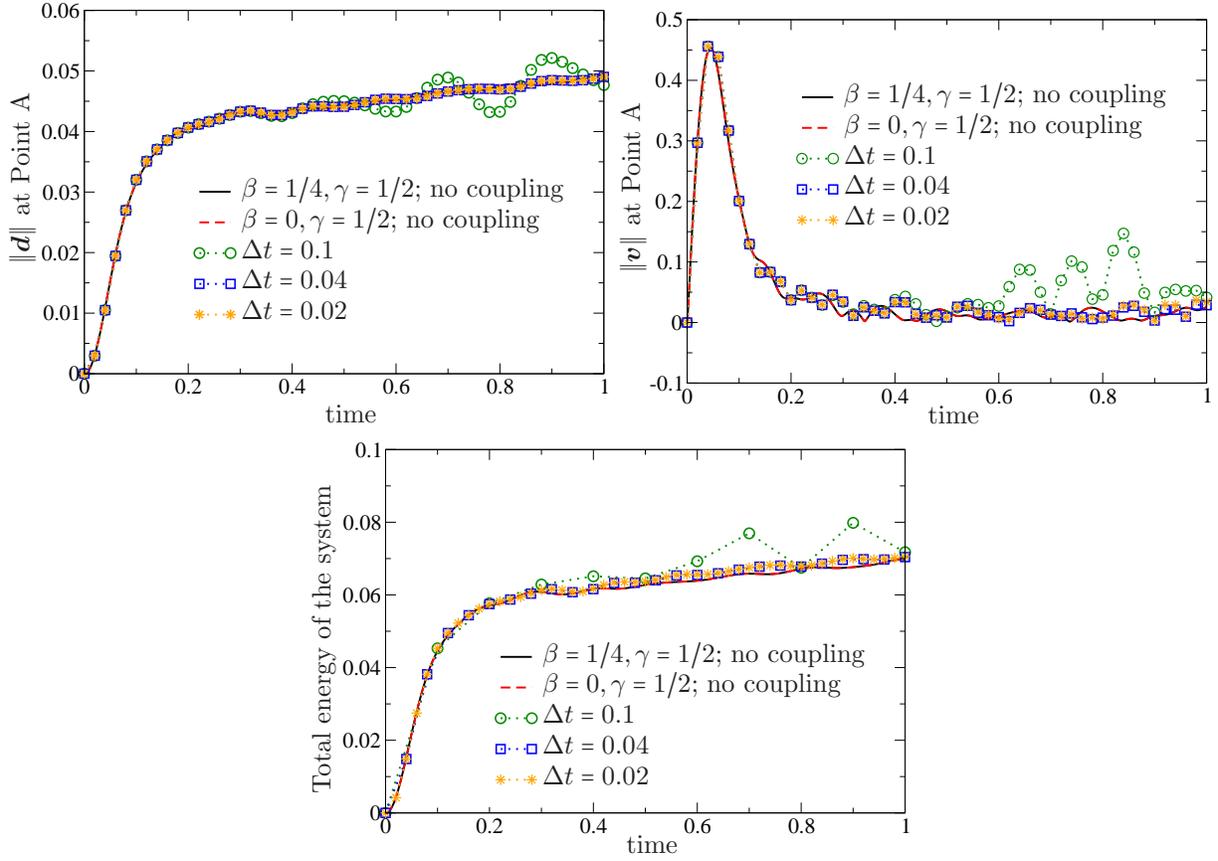

  \centering
  \psfrag{AA}{$\beta = 1/4, \gamma = 1/2$; no coupling}
  \psfrag{CD}{$\beta = 0, \gamma = 1/2$; no coupling}
  \psfrag{c1}{$\Delta t = 0.1$}
  \psfrag{c2}{$\Delta t = 0.04$}
  \psfrag{c3}{$\Delta t = 0.02$}
  \psfrag{time}{time}
  \psfrag{d}{$\|\boldsymbol{d}\|$ at Point A}
  \psfrag{v}{$\|\boldsymbol{v}\|$ at Point A}
  \psfrag{E}{Total energy of the system}
  \subfigure{
    \includegraphics[clip, scale = 0.32]{Figures/2D-structured/h1/d.eps}}
  \subfigure{
    \includegraphics[clip, scale = 0.32]{Figures/2D-structured/h1/v.eps}}
  \subfigure{
    \includegraphics[clip, scale = 0.32]{Figures/2D-structured/h1/enrg.eps}}
  \caption{Square plate subjected to a corner force: 
    This figure compares the numerical solutions (2-norm 
    of the displacement at Point A, 2-norm of the velocity 
    at Point A, and the total energy of the system) under 
    the proposed multi-time-step coupling method for various 
    system time-steps keeping the subdomain time-steps fixed. 
    The subdomain time-step in all subdomains is taken as 
    $0.02$. 
    The numerical solutions obtained without decomposing 
    the domain into subdomains and using a relatively 
    smaller time-step, $\Delta t = 0.001$, are also 
    presented for reference.
    \emph{
      The figure clearly demonstrates that, under fixed 
      subdomain time-steps, accuracy can be improved by 
      decreasing the system time-step.} 
    \label{Fig:Monolithic_dve_h1}}
\end{figure}

\begin{figure}
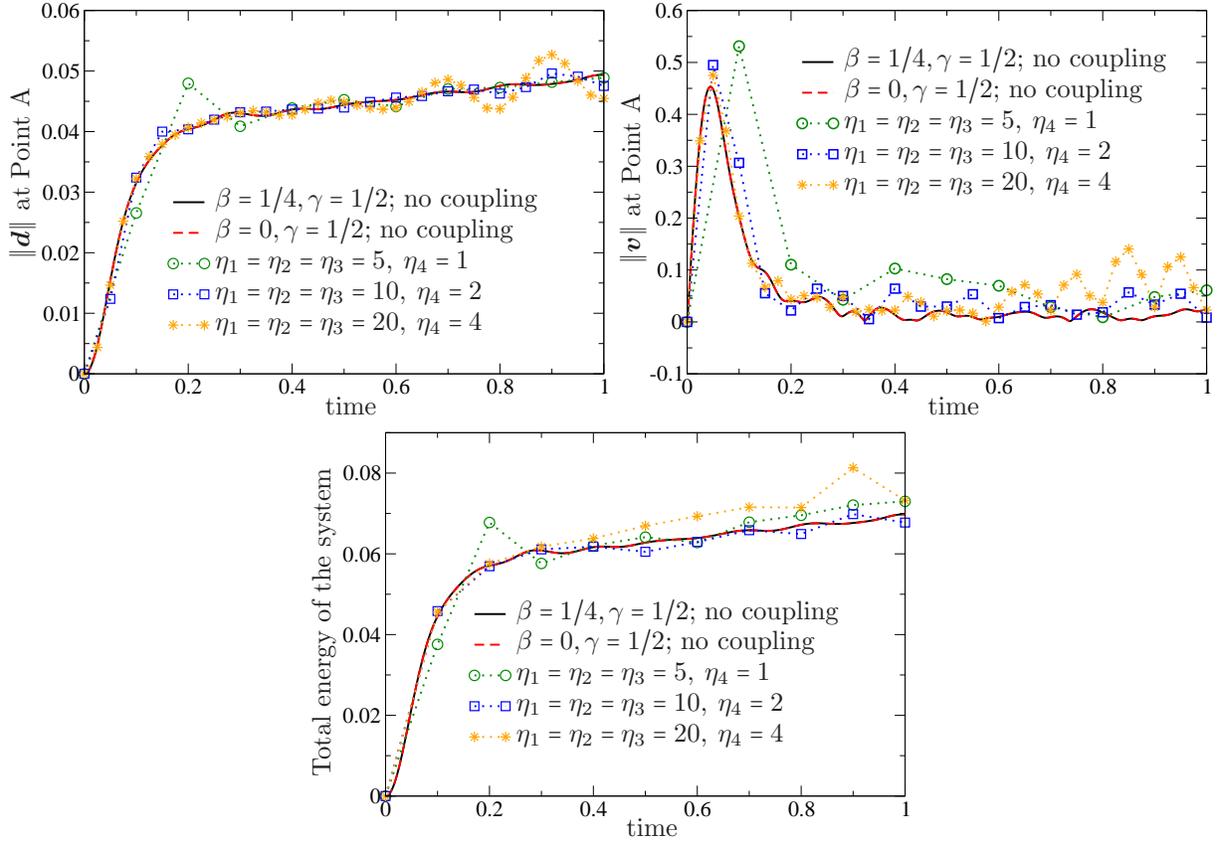

  \centering
  \psfrag{AA}{$\beta = 1/4, \gamma = 1/2$; no coupling}
  \psfrag{CD}{$\beta = 0, \gamma = 1/2$; no coupling}
  \psfrag{c1}{$\eta_1 = \eta_2 = \eta_3 = 5,\;\eta_4 = 1$}
  \psfrag{c2}{$\eta_1 = \eta_2 = \eta_3 = 10,\;\eta_4 = 2$}
  \psfrag{c3}{$\eta_1 = \eta_2 = \eta_3 = 20,\;\eta_4 = 4$}
  \psfrag{time}{time}
  \psfrag{d}{$\|\boldsymbol{d}\|$ at Point A}
  \psfrag{v}{$\|\boldsymbol{v}\|$ at Point A}
  \psfrag{E}{Total energy of the system}
  \subfigure{
    \includegraphics[clip, scale = 0.32]{Figures/2D-structured/h2/d.eps}}
  \subfigure{
    \includegraphics[clip, scale = 0.32]{Figures/2D-structured/h2/v.eps}}
  \subfigure{
    \includegraphics[clip, scale = 0.32]{Figures/2D-structured/h2/enrg.eps}}
  \caption{Square plate subjected to a corner force: 
    The system time-step is taken as $0.1$. The top and 
    middle figures, respectively, show the 2-norm of 
    the displacement and the 2-norm of the velocity at 
    Point A for various values of $\eta_i$ (i.e., for 
    various subdomain time-steps). The bottom figure 
    shows the total energy of the system for values 
    of $\eta_i$. 
    The numerical solutions obtained without decomposing 
    the domain into subdomains and using a relatively 
    smaller time-step, $\Delta t = 0.001$, are also 
    presented for reference.
    \emph{In this case, the accuracy did not improve 
      with subcycling, which is in accordance with 
      the theoretical predictions.} 
    \label{Fig:Monolithic_dve_h2}}	
\end{figure}

\begin{figure}
\centering 
\psfrag{E_interface}{$\mathcal{E}_{\mathrm{interface}}^{(n 
\rightarrow n+1)}$}
\psfrag{time}{time}
\psfrag{set 1}{$\eta_1 = \eta_2 = \eta_3 = 5,\;\eta_4 = 1$}
\psfrag{set 2}{$\eta_1 = \eta_2 = \eta_3 = 10,\;\eta_4 = 2$}
\psfrag{set 3}{$\eta_1 = \eta_2 = \eta_3 = 20,\;\eta_4 = 4$}
\includegraphics[clip, scale = 0.32]{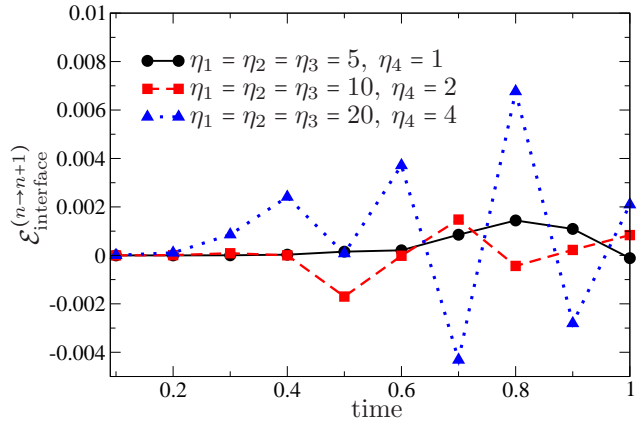}
\caption{Square plate subjected to a corner force:
	The system time-step is taken as $0.1$. The value of 
	$\mathcal{E}_{\mathrm{interface}}^{(n \rightarrow n+1)}$ 
	for the problem presented in \ref{Fig:Monolithic_dve_h2} 
	is plotted. As seen above, subcycling can increase the 
	discretization error at the interface.
	\label{Fig:2D_E_interface}}
\end{figure}


\begin{figure}
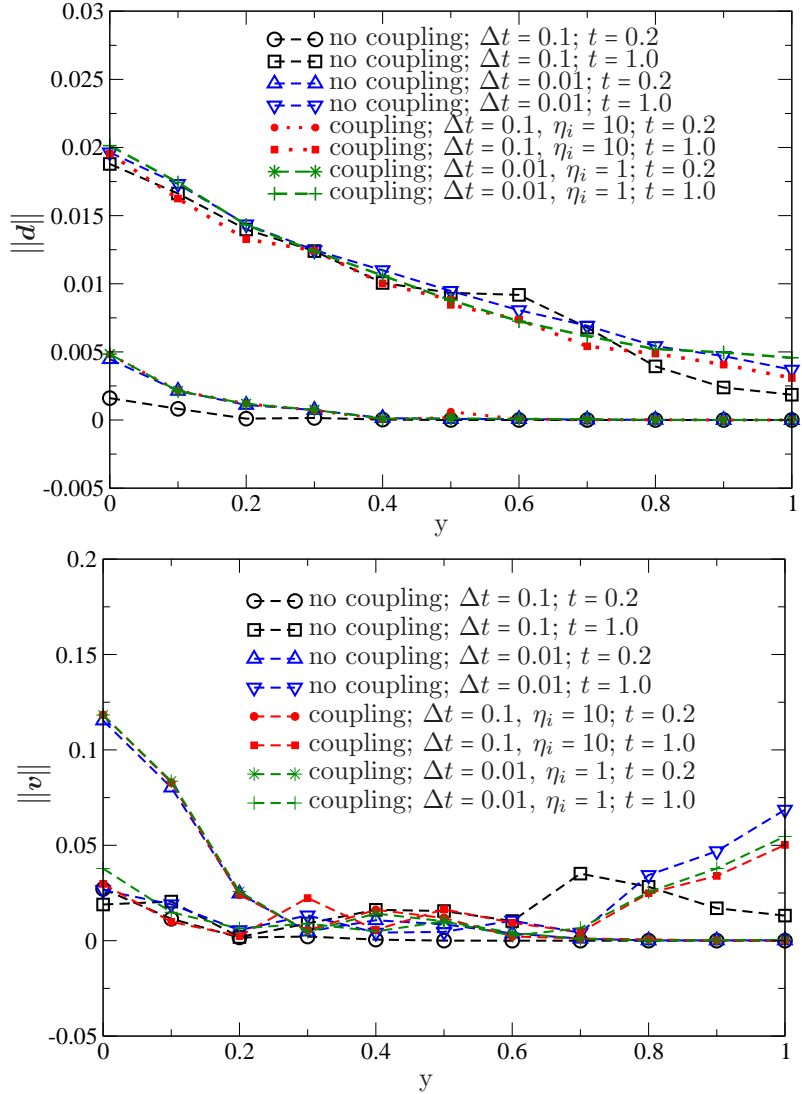

  \centering
  \psfrag{d}{$\big| \big| \boldsymbol{d} \big| \big|$}
  \psfrag{v}{$\big| \big| \boldsymbol{v} \big| \big|$}
  \psfrag{y}{$y$}
  \psfrag{UD_1_t1}{no coupling; $\Delta t = 0.1$; $t = 0.2$}
  \psfrag{UD_1_t2}{no coupling; $\Delta t = 0.1$; $t = 1.0$}
  \psfrag{UD_2_t1}{no coupling; $\Delta t = 0.01$; $t = 0.2$}
  \psfrag{UD_2_t2}{no coupling; $\Delta t = 0.01$; $t = 1.0$}
  \psfrag{DD_1_t1}{coupling; $\Delta t = 0.1,\; \eta_i = 10$; $t = 0.2$}
  \psfrag{DD_1_t2}{coupling; $\Delta t = 0.1,\; \eta_i = 10$; $t = 1.0$}
  \psfrag{DD_2_t1}{coupling; $\Delta t = 0.01,\; \eta_i = 1$; $t = 0.2$}
  \psfrag{DD_2_t2}{coupling; $\Delta t = 0.01,\; \eta_i = 1$; $t = 1.0$}
  \psfrag{time}{y}
  \subfigure{
    \includegraphics[clip,scale = 0.42]{Figures/2D-structured/section-final/d.eps}}
  \subfigure{
    \includegraphics[clip,scale = 0.42]{Figures/2D-structured/section-final/v.eps}}
  \caption{Square plate subjected to a corner force:~
    The figure compares the numerical solutions under 
    the proposed coupling method with that obtained 
    without decomposing into subdomains. 
    The top figure shows the 2-norm of the displacement 
    along y-direction at $x = 0.8$ at various time levels, 
    and the bottom figure shows the corresponding 2-norm 
    of the velocity. 
    One can also notice that there is no significant 
    drift in the displacements at the subdomain 
    interface. 
    \label{Fig:Monolithic_2D_d_and_v_x_dot8}}
\end{figure}

\begin{figure}
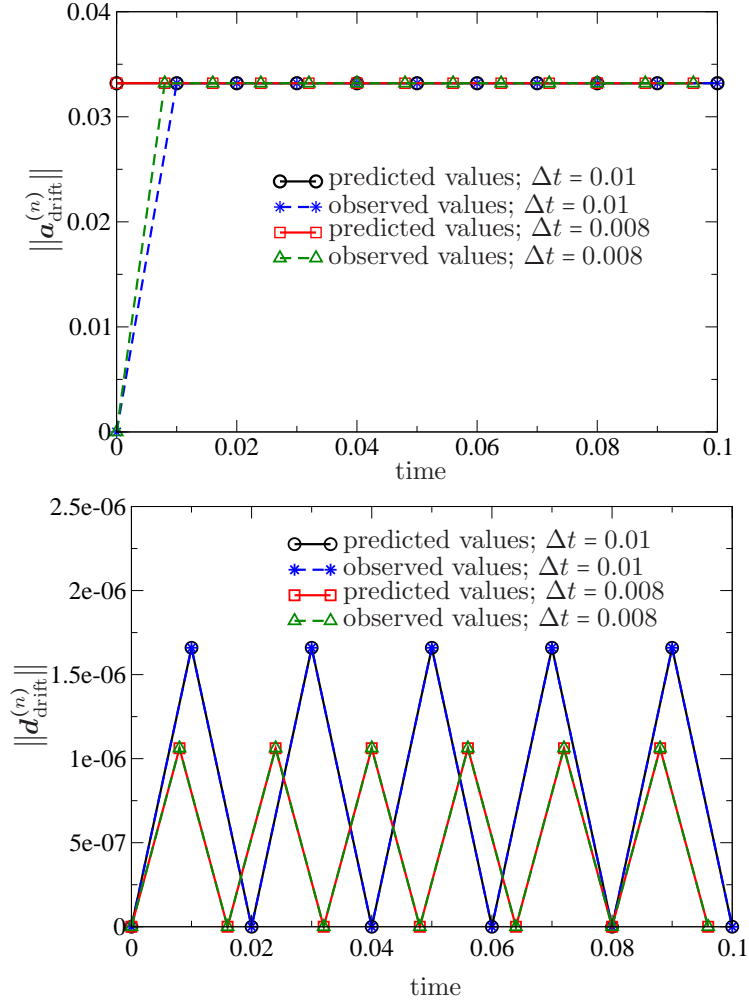

\centering
\psfrag{time}{time}
\psfrag{drift a}{$\big| \big| \boldsymbol{a}_{\mathrm{drift}}^{(n)} \big| \big|$}
\psfrag{drift d}{$\big| \big| \boldsymbol{d}_{\mathrm{drift}}^{(n)} \big| \big|$}
\psfrag{dt1 predicted}{predicted values; $\Delta t = 0.01$}
\psfrag{dt1 obs}{observed values; $\Delta t = 0.01$}
\psfrag{dt2 predicted}{predicted values; $\Delta t = 0.008$}
\psfrag{dt2 obs}{observed values; $\Delta t = 0.008$}
\subfigure{
	\includegraphics[clip, scale = 0.37]{Figures/2D-structured/drift/a.eps}
	}
\subfigure{
	\includegraphics[clip, scale = 0.37]{Figures/2D-structured/drift/d.eps}
	}
\caption{Bounds on drifts: The results in this figure 
substantiate the discussion presented in subsection 
\ref{Subsec:Monolithic_bounds_on_drifts}. The $L_2$-norm 
of the drift in acceleration and and the drift in 
displacement at the subdomain interface are shown. 
Newmark central difference scheme ($\beta = 0$, 
$\gamma = 1/2$) is employed in all subdomains and 
there is no subcycling. The theoretical predictions 
are based on equations \eqref{Eqn:acc_drift}--\eqref{Eqn:disp_drift}. It is noteworthy that the 
drift in the displacements along the subdomain 
interface is small under the proposed coupling 
method. \label{Fig:Bound_on_drift}}
\end{figure}

\begin{figure}
  \centering
  \includegraphics[scale = 0.85, clip]{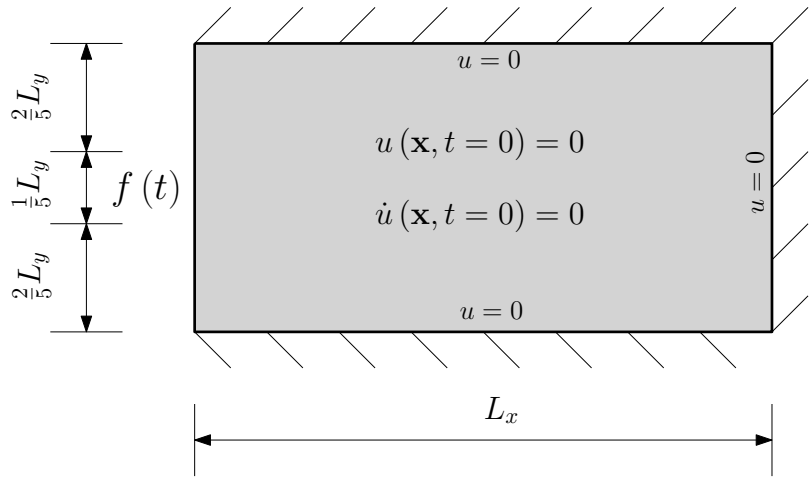}
  \caption{Two-dimensional wave propagation problem: A 
    pictorial description. The elastic body is assumed 
    to be isotropic and homogeneous. The force is applied 
    over a length of $1/5 L_y$ in the middle of the left 
    side of the boundary. The body is allowed to vibrate 
    freely after $t = \tau_{\mathrm{load}}$. No absorbing 
    boundary conditions are applied at the fixed ends.
    \label{Fig:2D_Wave_Pictorial}}
\end{figure}

\begin{figure}
\centering
\includegraphics[scale = 0.5]{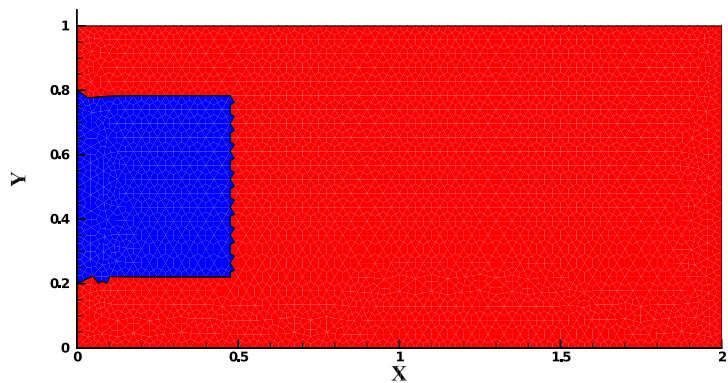}
\caption{Two-dimensional wave propagation problem: 
  The computational domain is divided into 
  two subdomains. Subdomain 1 is shown in 
  blue color, and subdomain 2 is shown in 
  red color. The mesh consists of 5604 
  four-node quadrilateral elements.
  \label{Fig:2D_Wave_DD}}
\end{figure}

\begin{figure}
\centering
\subfigure[$t = 0.25$, $u_{\mathrm{min}} = -0.053$, $u_{\mathrm{max}} = 0.133$]{\includegraphics[scale = 0.3, clip]{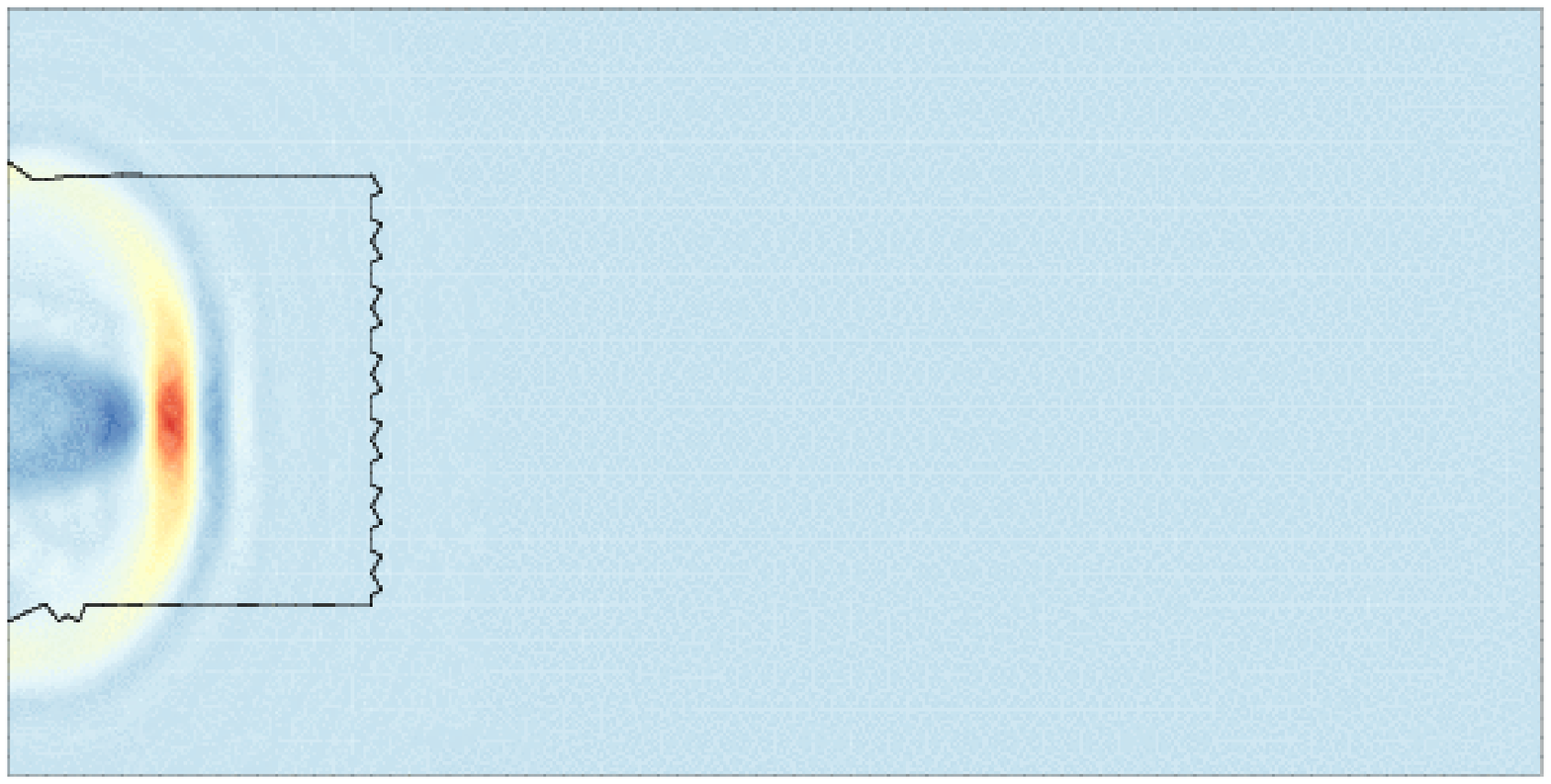}}
\subfigure[$t = 0.50$, $u_{\mathrm{min}} = -0.044$, $u_{\mathrm{max}} = 0.088$]{\includegraphics[scale = 0.3, clip]{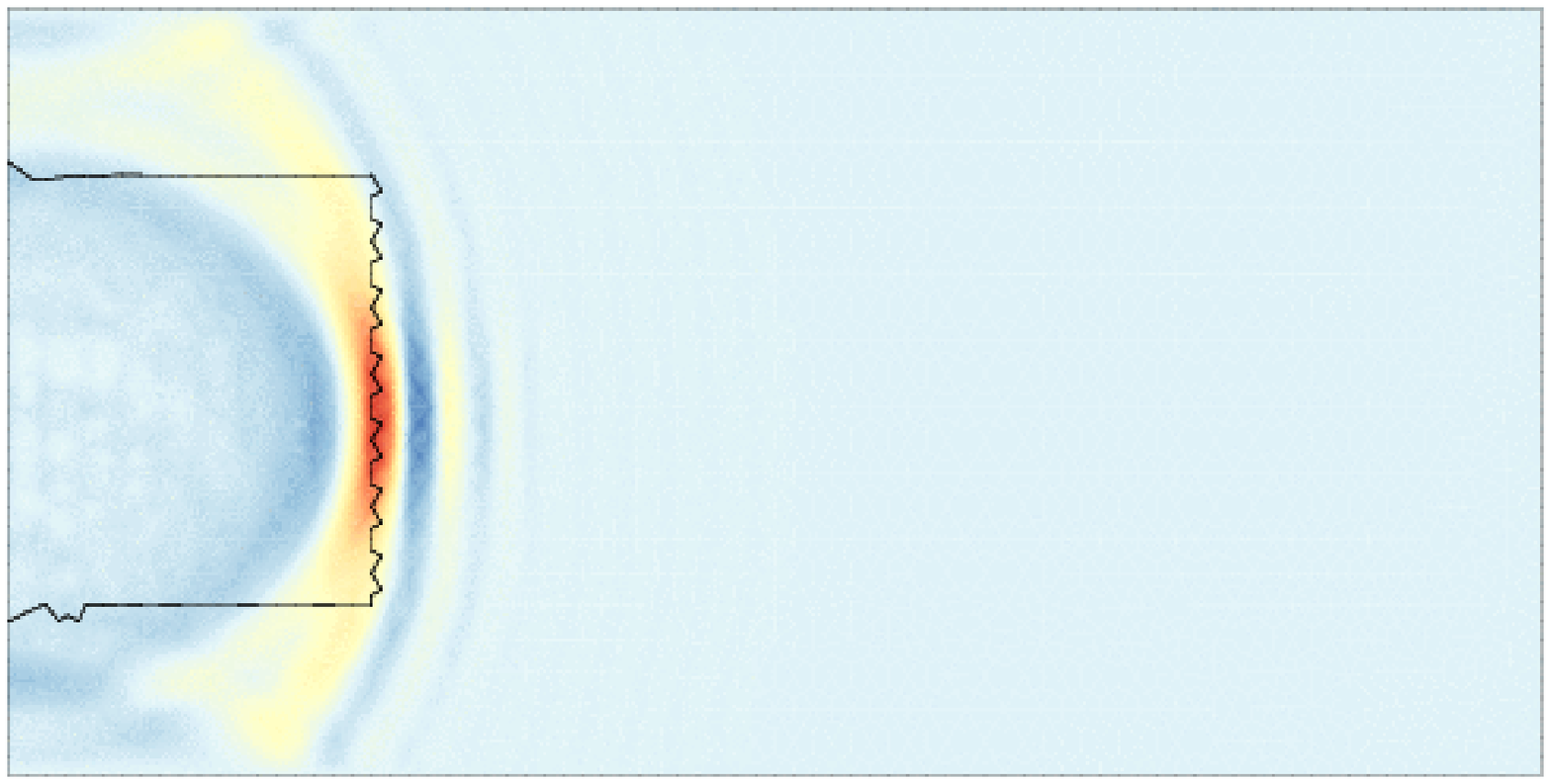}}
\subfigure[$t = 0.75$, $u_{\mathrm{min}} = -0.045$, $u_{\mathrm{max}} = 0.063$]{\includegraphics[scale = 0.3, clip]{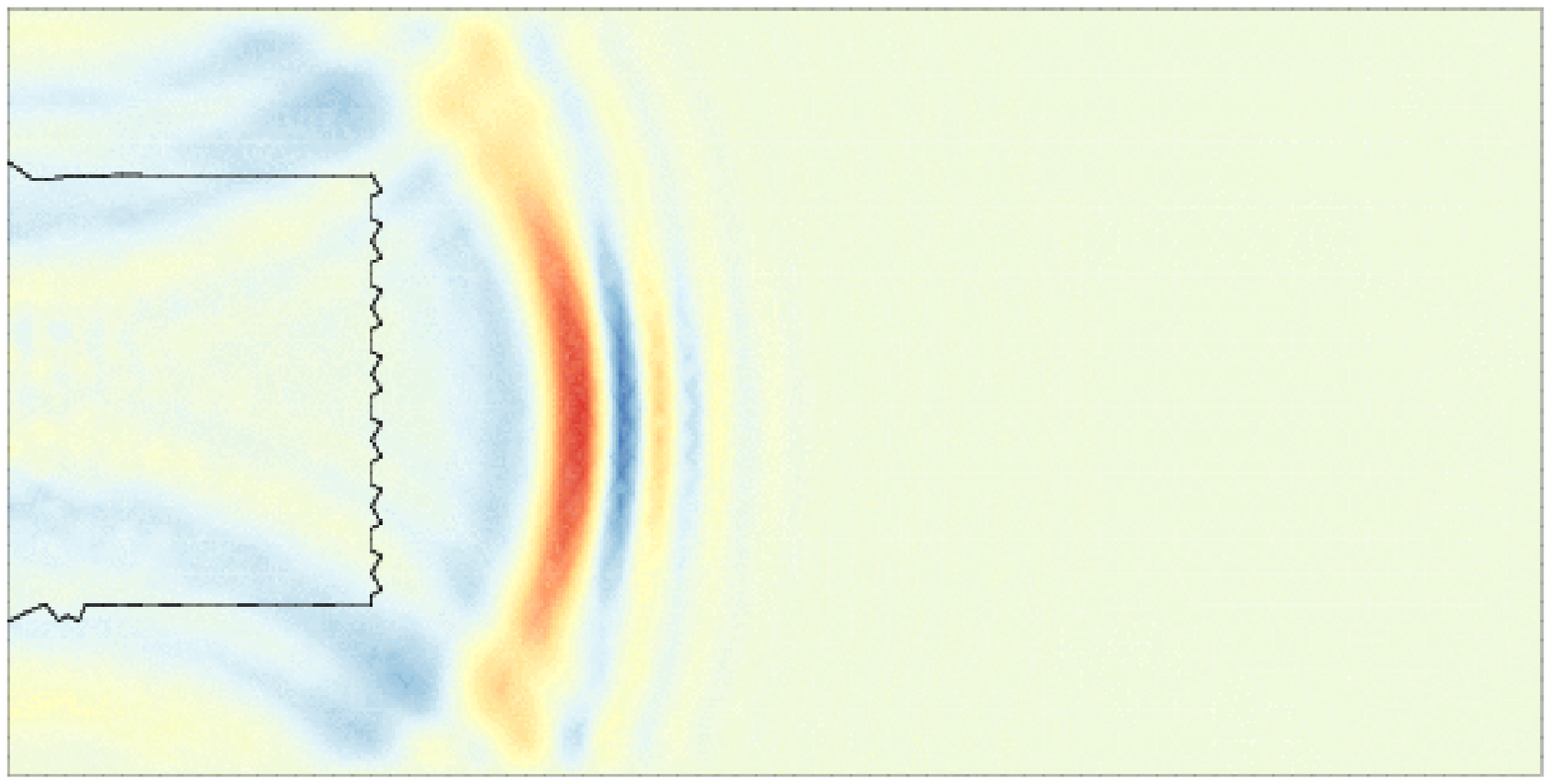}}
\subfigure[$t = 1.00$, $u_{\mathrm{min}} = -0.037$, $u_{\mathrm{max}} = 0.053$]{\includegraphics[scale = 0.3, clip]{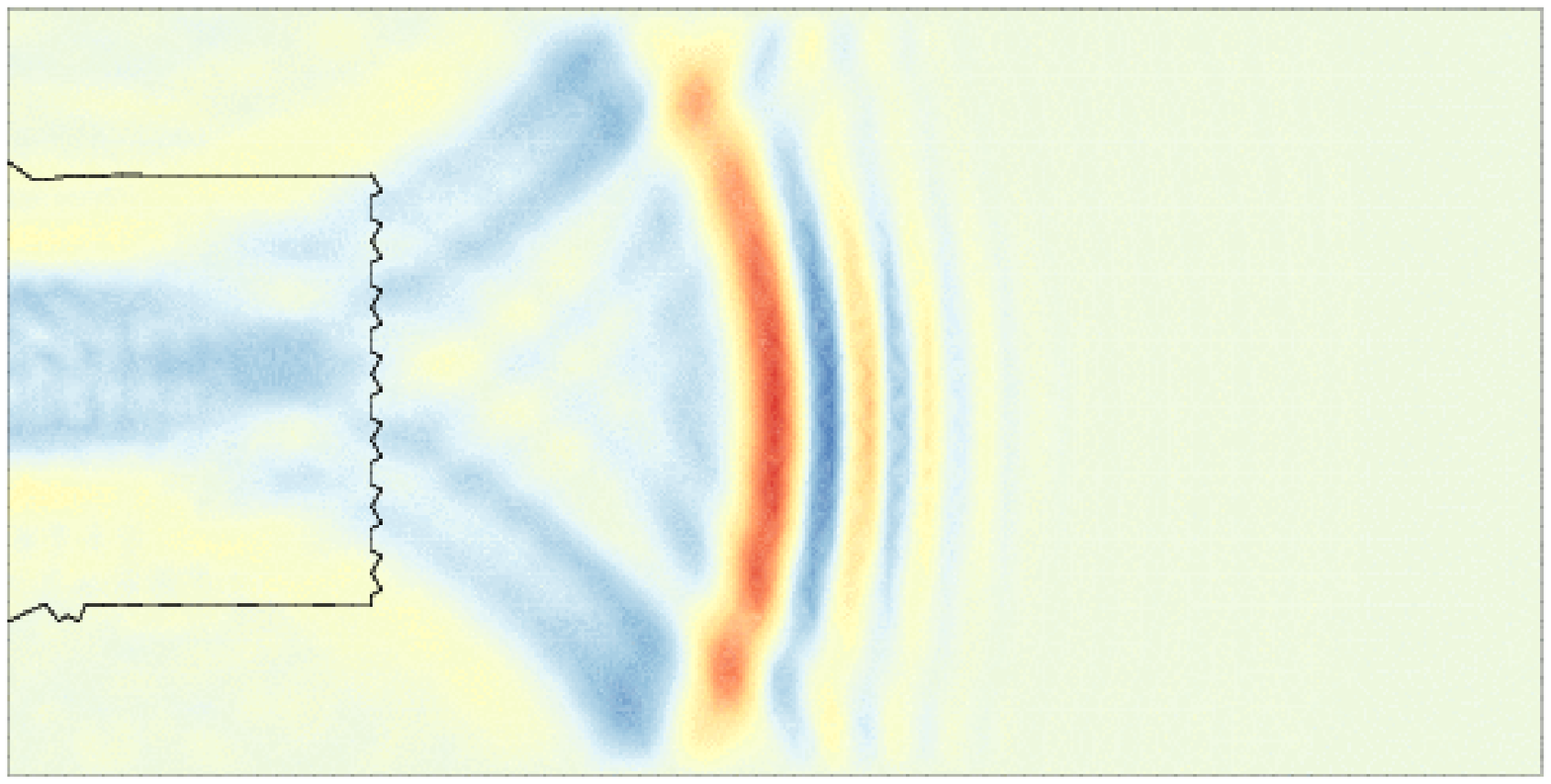}}
\subfigure[$t = 1.25$, $u_{\mathrm{min}} = -0.034$, $u_{\mathrm{max}} = 0.044$]{\includegraphics[scale = 0.3, clip]{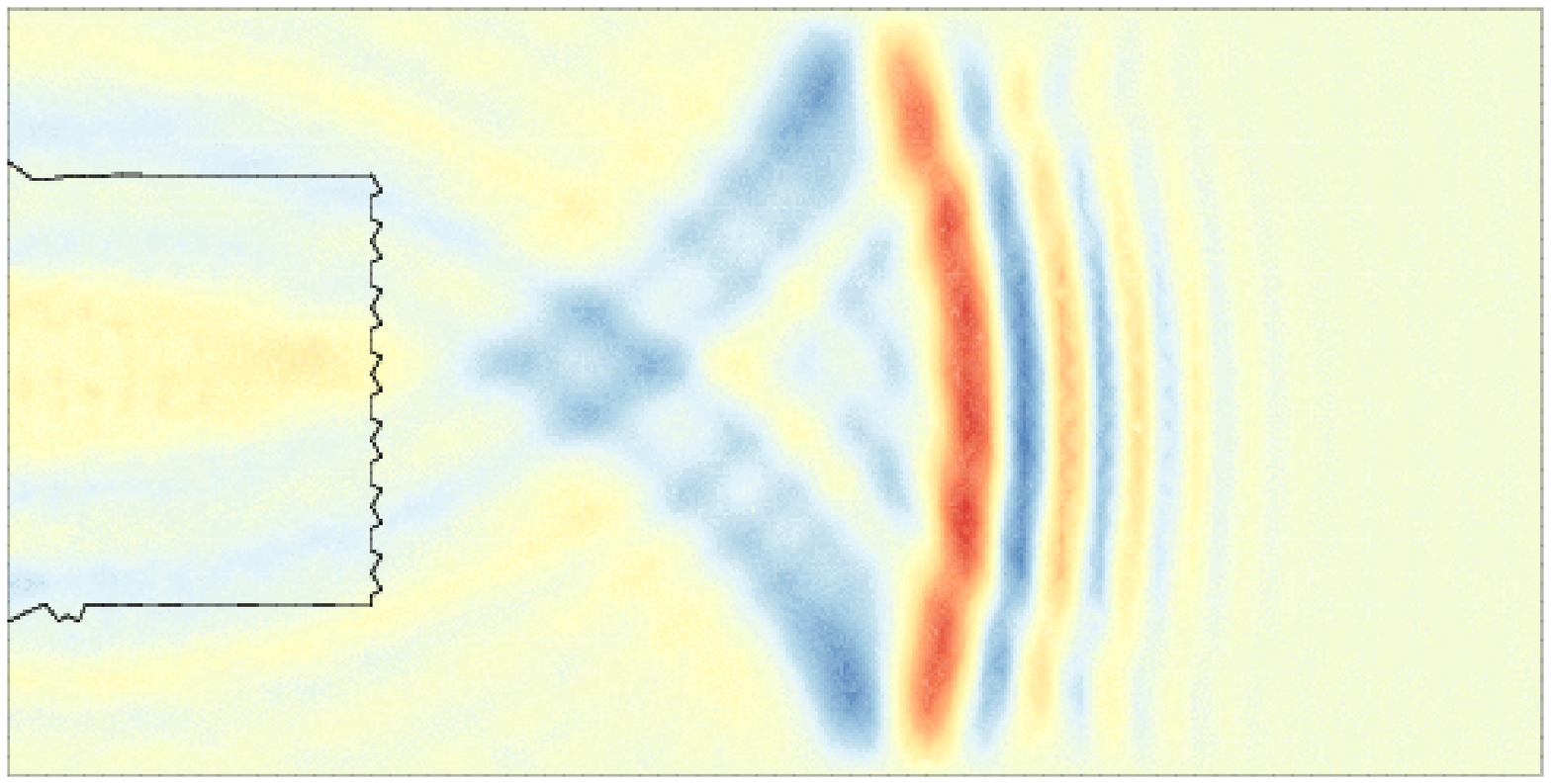}}
\subfigure[$t = 1.50$, $u_{\mathrm{min}} = -0.030$, $u_{\mathrm{max}} = 0.039$]{\includegraphics[scale = 0.3, clip]{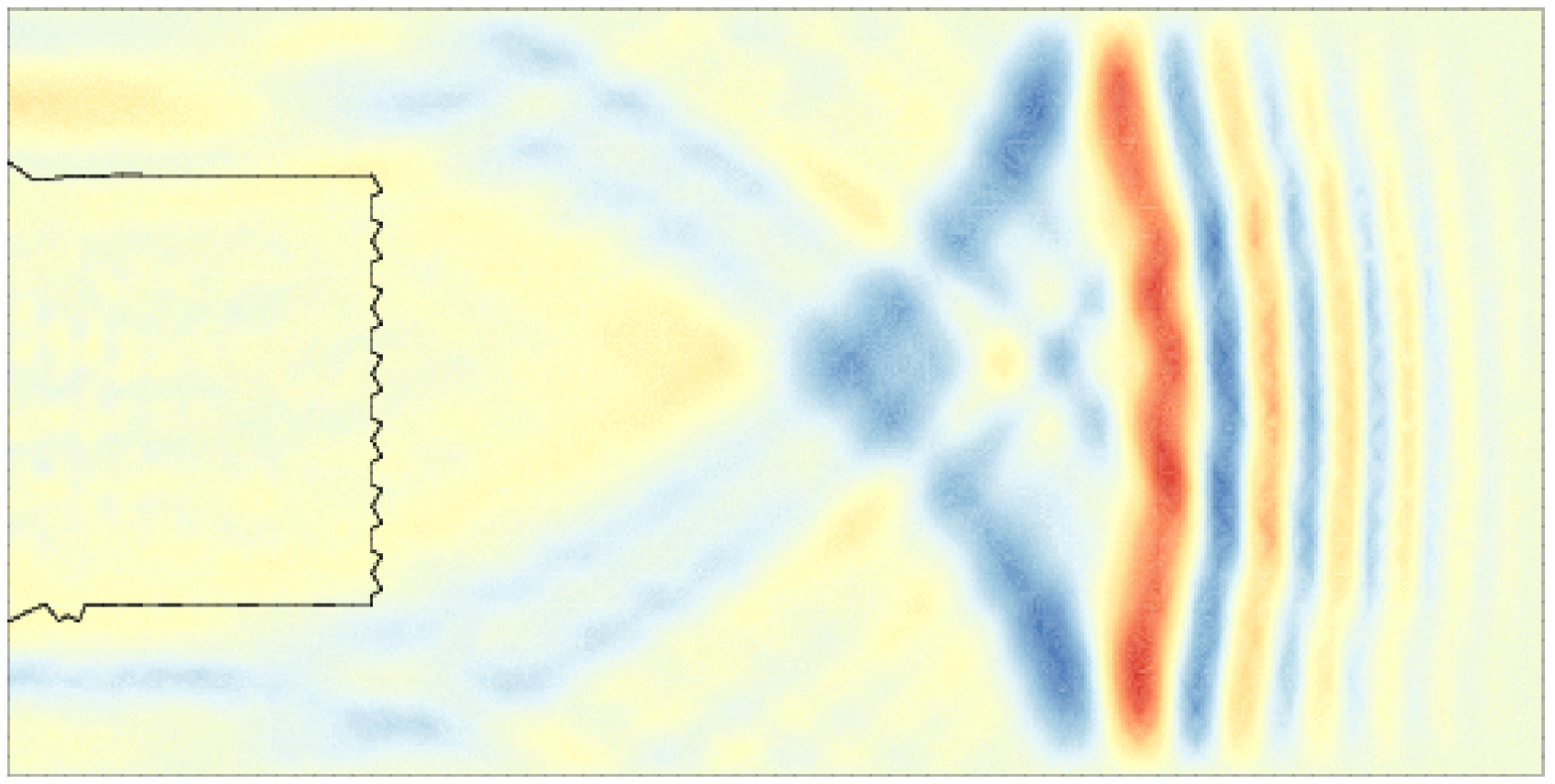}}
\subfigure[$t = 1.75$, $u_{\mathrm{min}} = -0.034$, $u_{\mathrm{max}} = 0.040$]{\includegraphics[scale = 0.3, clip]{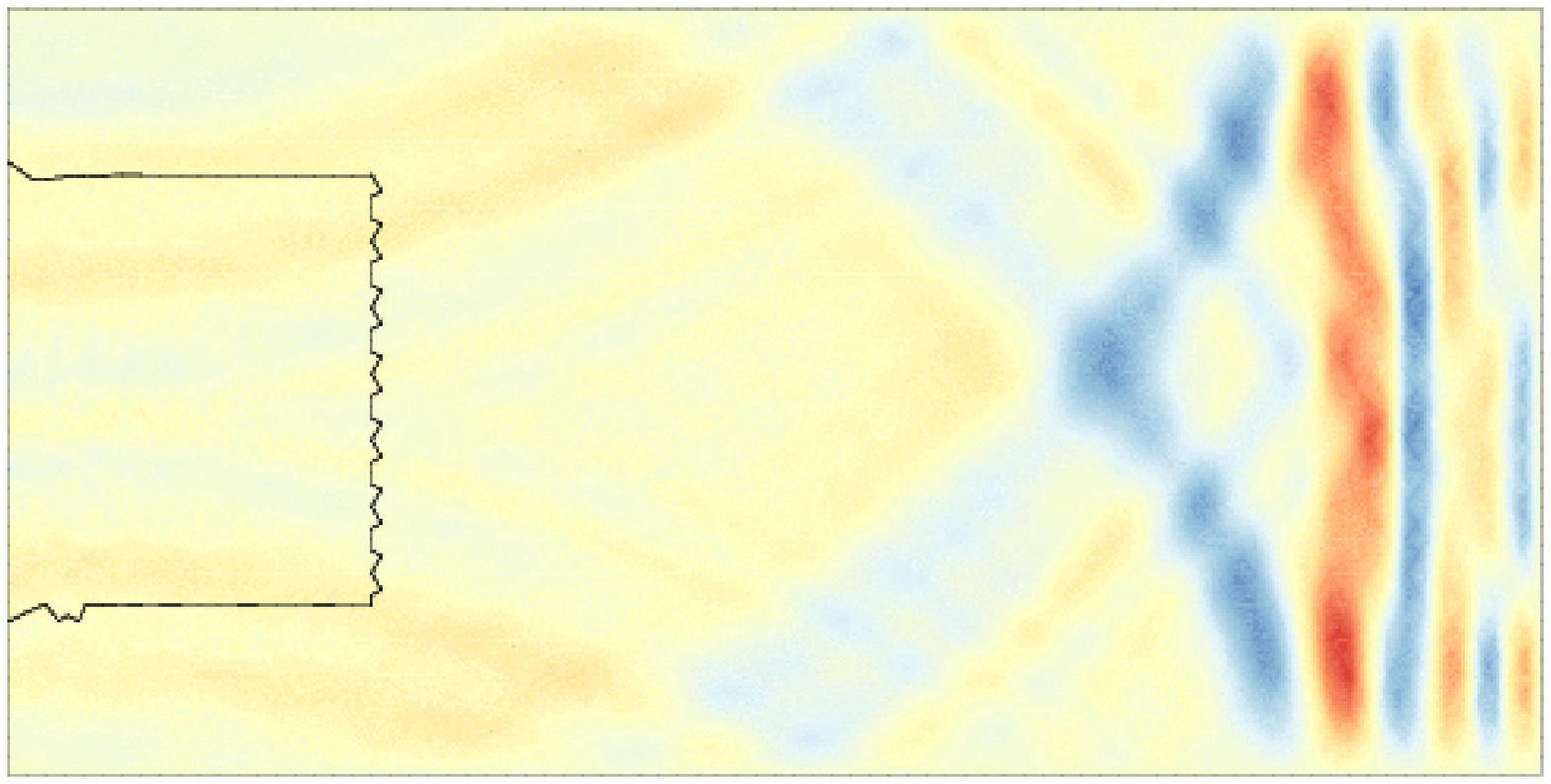}}
\subfigure[$t = 2.00$, $u_{\mathrm{min}} = -0.047$, $u_{\mathrm{max}} = 0.042$]{\includegraphics[scale = 0.3, clip]{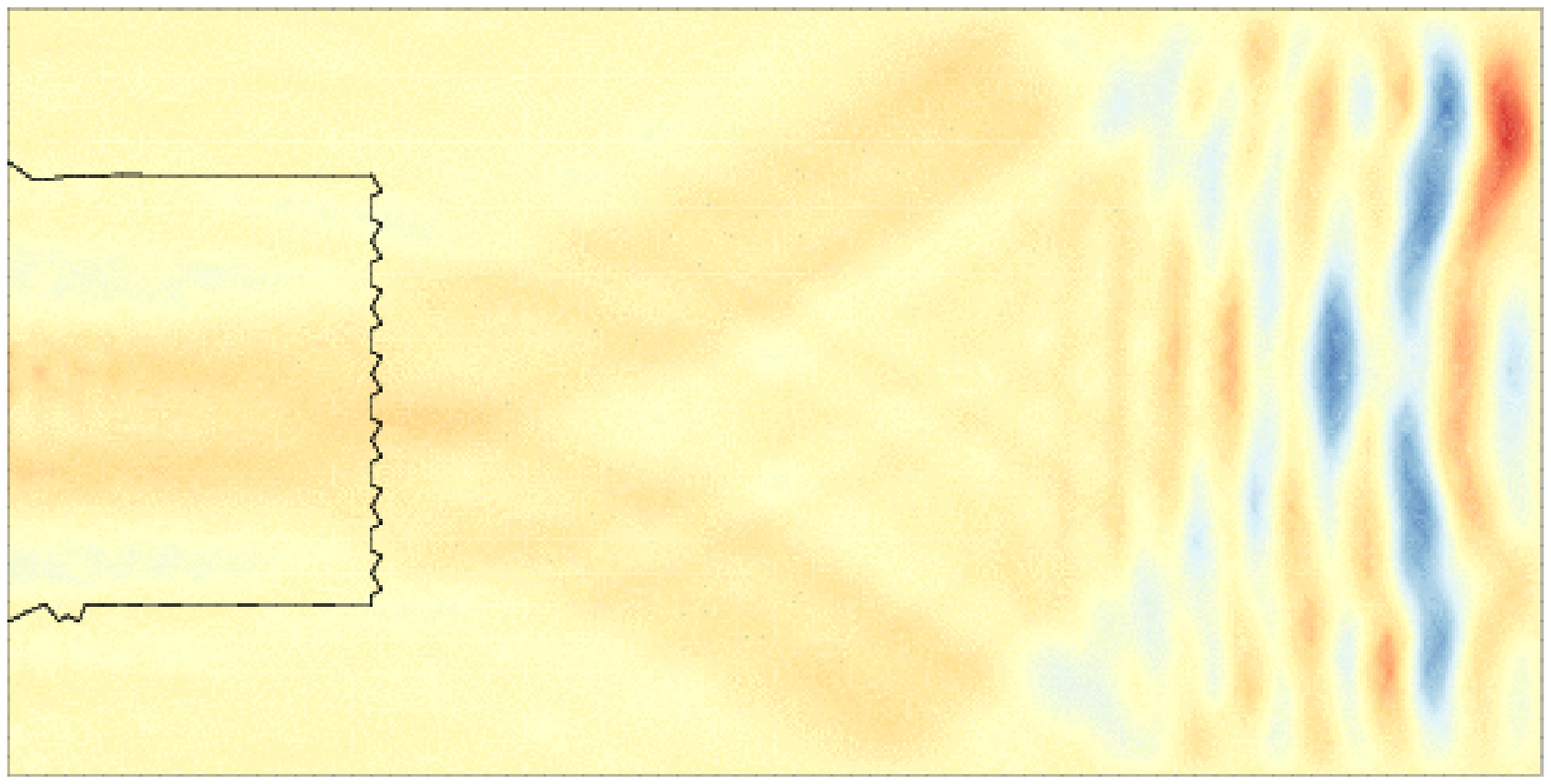}}
\subfigure[$t = 2.25$, $u_{\mathrm{min}} = -0.044$, $u_{\mathrm{max}} = 0.039$]{\includegraphics[scale = 0.3, clip]{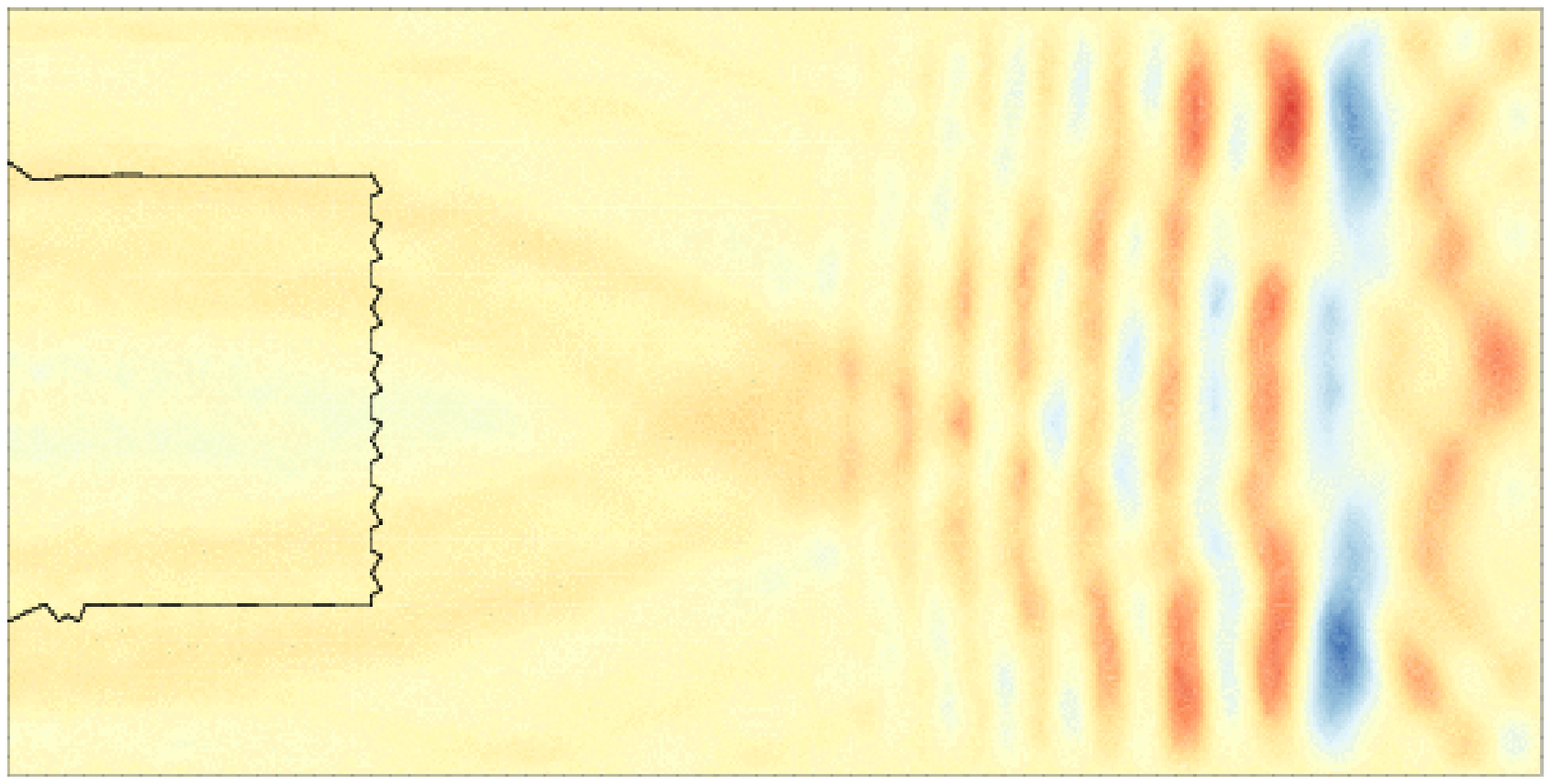}}
\subfigure[$t = 2.50$, $u_{\mathrm{min}} = -0.037$, $u_{\mathrm{max}} = 0.037$]{\includegraphics[scale = 0.3, clip]{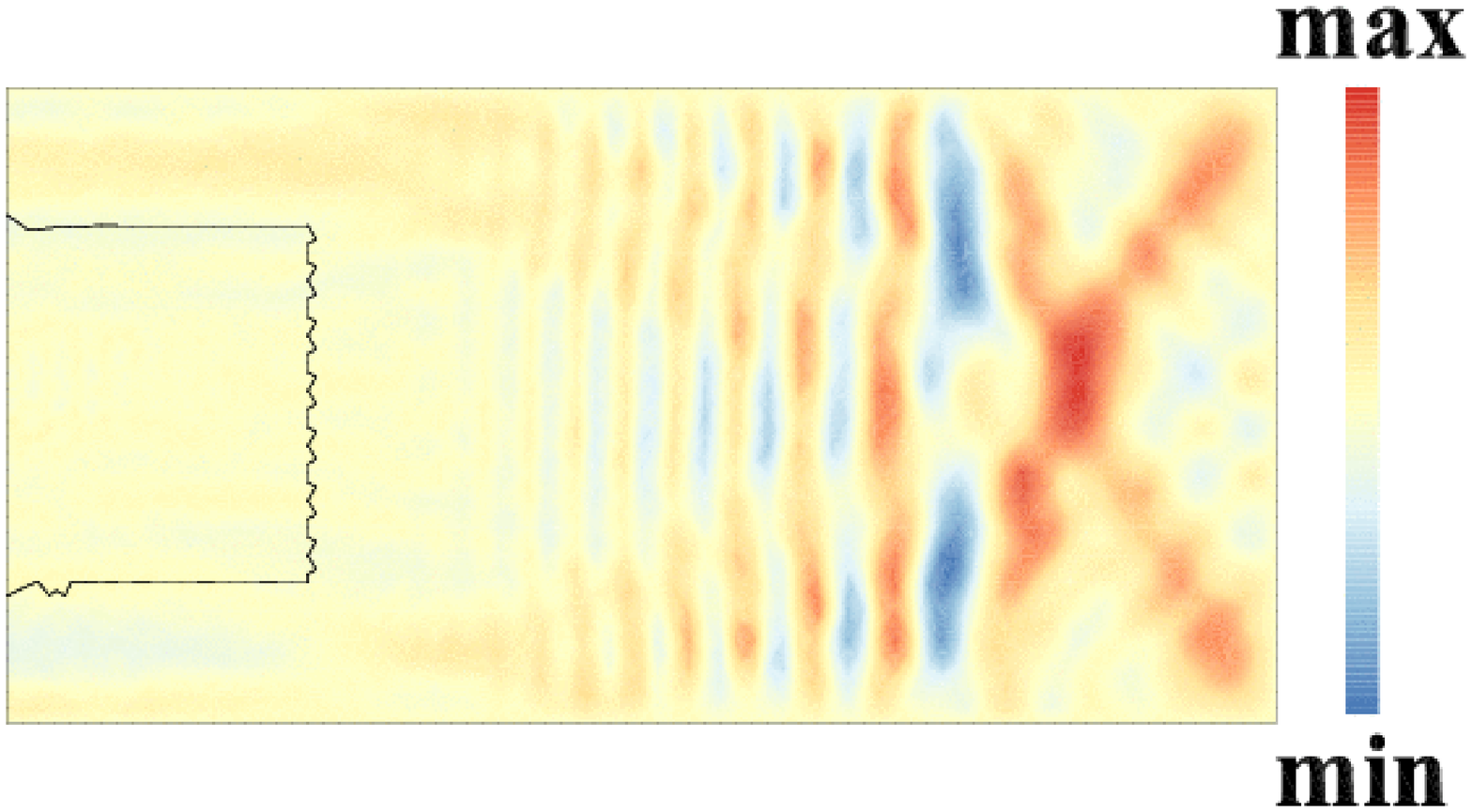}}
\caption{Two-dimensional wave propagation problem: Here, $f_0 = 5.0$, 
$\tau_{\mathrm{load}} = 0.1$, 
and $c_0 = 1$. The system time-step is $\Delta t = 10^{-4}$, and the subdomain
time-steps are $\Delta t_1 = 10^{-5}$, and $\Delta t_2 = 10^{-4}$. 
The subdomain Newmark parameters are 
$\left( \gamma_1 , \beta_1\right) = \left( 1/2, 0\right)$, and
$\left( \gamma_2 , \beta_2\right) = \left( 1/2, 1/4\right)$. 
\label{Fig:2D_Wave_Case2}}
\end{figure}

\end{document}